\documentclass[12pt]{amsart}

\usepackage[cp1251]{inputenc}
\usepackage{amsthm,amssymb,amsmath,amsfonts,wasysym}
\pagestyle{plain}

\usepackage{textcomp} 
\usepackage{etoolbox}
\usepackage{comment}

\usepackage{graphicx}
\usepackage[matrix,arrow,curve]{xy}
\usepackage{longtable}
\sloppy

\textwidth=16cm 
\textheight=23cm 
\oddsidemargin=0cm
\evensidemargin=0cm 
\topmargin=-20pt

\newcommand{\Z}{\ensuremath{\mathbb{Z}}}

\newcommand{\A}{\ensuremath{\mathbb{A}}}

\newcommand{\F}{\ensuremath{\mathbb{F}}}

\newcommand{\CG}{\ensuremath{\mathfrak{C}}}

\newcommand{\DG}{\ensuremath{\mathfrak{D}}}

\newcommand{\AG}{\ensuremath{\mathfrak{A}}}

\newcommand{\SG}{\ensuremath{\mathfrak{S}}}

\newcommand{\ka}{\ensuremath{\Bbbk}}

\newcommand{\kka}{\ensuremath{\overline{\Bbbk}}}

\newcommand{\XX}{\ensuremath{\overline{X}}}

\newcommand{\ii}{\ensuremath{\mathrm{i}}}

\newcommand{\Pro}{\ensuremath{\mathbb{P}}}

\newcommand{\Aut}{\ensuremath{\operatorname{Aut}}}

\newcommand{\Gal}{\ensuremath{\operatorname{Gal}}}

\newcommand{\Pic}{\ensuremath{\operatorname{Pic}}}

\newcommand{\Char}{\ensuremath{\operatorname{char}}}

\newcommand{\ord}{\ensuremath{\operatorname{ord}}}

\makeatletter
\@addtoreset{equation}{section}
\makeatother

\newtheorem{theorem}[equation]{Theorem}
\newtheorem{proposition}[equation]{Proposition}
\newtheorem{lemma}[equation]{Lemma}
\newtheorem{corollary}[equation]{Corollary}

\theoremstyle{definition}
\newtheorem{example}[equation]{Example}
\newtheorem{definition}[equation]{Definition}
\newtheorem{cordef}[equation]{Corollary-definition}

\theoremstyle{remark}
\newtheorem{remark}[equation]{Remark}
\newtheorem{notation}[equation]{Notation}
\newtheorem{question}[equation]{Question}

\title{Quotients of del Pezzo surfaces}
\address{Institute for Information Transmission Problems, 19 Bolshoy Karetnyi side-str., Moscow 127994, Russia}
\address{Laboratory of Algebraic Geometry, National Research University Higher School of Economics, 6 Usacheva str., Moscow 119048, Russia}
\email{trepalin@mccme.ru}
\thanks{This work is supported by the Russian Science Foundation under grant \textnumero 18-11-00121}
\author{Andrey Trepalin}

\begin{document}

\begin{abstract}
Let $\ka$ be any field of characteristic zero, $X$ be a del Pezzo surface and $G$ be a finite subgroup in $\Aut(X)$. In this paper we study when the quotient surface $X / G$ can be non-rational over $\ka$. Obviously, if there are no smooth $\ka$-points on $X / G$ then it is not $\ka$-rational. Therefore under assumption that the set of smooth $\ka$-points on $X / G$ is not empty we show that there are few possibilities for non-$\ka$-rational quotients.

The quotients of del Pezzo surfaces of degree $2$ and greater are considered in the author's previous papers. In this paper we study the quotients of del Pezzo surfaces of degree $1$. We show that they can be non-$\ka$-rational only for the trivial group or cyclic groups of order $2$, $3$ and $6$. For the trivial group and the group of order $2$ we show that both $X$ and $X / G$ are not $\ka$-rational if the $G$-invariant Picard number of $X$ is~$1$. For the groups of order $3$ and $6$ we construct examples of both $\ka$-rational and non-$\ka$-rational quotients \mbox{of both $\ka$-rational} and non-$\ka$-rational del Pezzo surfaces of degree $1$ such that the $G$-invariant Picard number of $X$ is~$1$.

As a result of complete classification of non-$\ka$-rational quotients of del Pezzo surfaces we classify surfaces that are birationally equivalent to quotients of $\ka$-rational surfaces, and obtain some corollaries concerning fields of invariants of $\ka(x , y)$.

\end{abstract}


\maketitle
\tableofcontents

\section{Introduction}

Let $\ka$ be an arbitrary field. An $n$-dimensional variety $V$ is called \textit{$\ka$-rational} if there exists a birational map $\Pro^n_{\ka} \dashrightarrow V$ defined over $\ka$. If there exists a dominant rational map $\Pro^n_{\ka} \dashrightarrow V$ defined over $\ka$, then $V$ is called \textit{$\ka$-unirational}. Obviously, any $\ka$-rational variety is $\ka$-unirational. A natural question arises: when a $\ka$-unirational variety is $\ka$-rational?

For $n = 1$ the answer is a classical result.

\begin{theorem}[J. L\"uroth \cite{Lur76}]
\label{Luroth}
Any $\ka$-unirational curve is $\ka$-rational.
\end{theorem}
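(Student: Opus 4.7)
The plan is to reduce the geometric statement to the classical algebraic form of L\"uroth's theorem: any intermediate field $\ka \subsetneq K \subseteq \ka(t)$ equals $\ka(\theta)$ for some $\theta \in \ka(t)$. If $V$ is a $\ka$-unirational curve, then by definition there is a dominant rational map $\Pro^1_{\ka} \dashrightarrow V$, which induces an embedding of function fields $\ka(V) \hookrightarrow \ka(\Pro^1_{\ka}) = \ka(t)$. Since $\dim V = 1$, the subfield $\ka(V)$ strictly contains $\ka$, so once the algebraic form is available we obtain $\ka(V) \cong \ka(s)$, and hence $V$ and $\Pro^1_{\ka}$ share a function field and are $\ka$-birational.

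For the algebraic step, let $n = [\ka(t) : K]$ and write the minimal polynomial of $t$ over $K$ as
\[
m(T) = T^n + c_{n-1}(t) T^{n-1} + \dots + c_0(t) \in K[T].
\]
Not every coefficient $c_i$ can lie in $\ka$, for otherwise $t$ would be algebraic over $\ka$. Writing each $c_i = f_i(t)/g_i(t)$ in lowest terms and clearing a common denominator, I would produce a polynomial $F(t, T) \in \ka[t][T]$ of $T$-degree $n$ which is primitive as a polynomial in $t$.

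The crux is a degree comparison. A standard computation shows that for any non-constant $\theta = f(t)/g(t)$ in lowest terms, $[\ka(t) : \ka(\theta)] = \max(\deg f, \deg g)$. On the other hand, by Gauss's lemma applied in the UFD $\ka[t]$, any factorization of $F$ in $\ka[t][T]$ would descend to one of $m$ in $K[T]$, so $F$ is irreducible, and its $t$-degree equals $\max_i \max(\deg f_i, \deg g_i)$. Picking the index $i_0$ that achieves this maximum, one obtains
\[
[\ka(t) : \ka(c_{i_0})] = \deg_t F = n = [\ka(t) : K],
\]
and since $\ka(c_{i_0}) \subseteq K$, equality of degrees forces $K = \ka(c_{i_0})$.

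The main obstacle is the degree bookkeeping in this last step: correctly matching the $t$-degree of the cleared-denominator polynomial $F$ with the index $n = [\ka(t) : K]$. This is the essential content of Gauss's lemma in the setting of $\ka[t]$, exploiting that $m(T)$ is irreducible over $K[T]$; the remainder of the argument is formal manipulation of function fields.
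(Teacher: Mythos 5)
First, a point of comparison: the paper does not prove this statement at all --- it is quoted as a classical result of L\"uroth with a citation, so there is no proof in the paper to measure yours against. Your overall strategy (reduce to the field-theoretic form of L\"uroth's theorem via function fields, then argue with the minimal polynomial of $t$ over the intermediate field $K$, clearing denominators and invoking Gauss's lemma) is the standard classical argument, and the geometric reduction at the start is fine: with the paper's definition, a $\ka$-unirational curve is dominated by $\Pro^1_{\ka}$, so $\ka(V)$ embeds in $\ka(t)$, and a curve is $\ka$-rational if and only if its function field is purely transcendental of degree one.

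However, the decisive step of the algebraic argument is genuinely missing. You assert both that $\deg_t F = \max_i \max(\deg f_i, \deg g_i)$ and that this common value equals $n = [\ka(t):K]$; neither follows from the irreducibility of $F$. The first claim is false as a general statement about clearing denominators: clearing denominators in $T^2 + t^{-1}T + (t+1)^{-1}$ yields the primitive polynomial $t(t+1)T^2 + (t+1)T + t$ of $t$-degree $2$, while $\max_i\max(\deg f_i,\deg g_i) = 1$; in general one only gets the inequality $\deg_t F \geqslant \max_i\max(\deg f_i,\deg g_i)$ for free. The second claim, which amounts to $\deg_t F = \deg_T F$, is essentially the whole content of the theorem, and there is no a priori symmetry between the two variables of $F$. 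The classical way to close this gap is the auxiliary polynomial $h(t,T) = g_{i_0}(t)f_{i_0}(T) - f_{i_0}(t)g_{i_0}(T)$ for any index $i_0$ with $c_{i_0}\notin\ka$: since $m(T)$ divides $f_{i_0}(T) - c_{i_0}g_{i_0}(T)$ in $K[T]$, Gauss's lemma gives $F \mid h$ in $\ka[t,T]$; comparing $t$-degrees, using $\deg_t h \leqslant \max(\deg f_{i_0},\deg g_{i_0}) \leqslant \deg_t F$, forces the cofactor to lie in $\ka[T]$, and the coprimality of $f_{i_0}$ and $g_{i_0}$ (which makes $h$ primitive as a polynomial in $t$) forces the cofactor to be a constant. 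Only then does one obtain $n = \deg_T F = \deg_T h = \max(\deg f_{i_0},\deg g_{i_0}) = [\ka(t):\ka(c_{i_0})]$, whence $K = \ka(c_{i_0})$. Without introducing $h$ (or an equivalent device) the degree comparison on which your conclusion rests is unsupported.
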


For $n = 2$ the same result follows from Castelnuovo's rationality criterion (see~\cite{Cast94}), but there are two additional conditions on $\ka$: the field $\ka$ is algebraically closed, and~$\Char \ka = 0$.

\begin{theorem}[G. Castelnuovo]
\label{Castelnuovo}
Let $\ka$ be an algebraically closed field of characteristic zero. Then any $\ka$-unirational surface is $\ka$-rational.
\end{theorem}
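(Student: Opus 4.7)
The plan is to reduce to Castelnuovo's rationality criterion, which asserts that a smooth projective surface $V$ over an algebraically closed field of characteristic zero is $\ka$-rational if and only if the irregularity $q(V) = h^1(V, \mathcal{O}_V)$ and the second plurigenus $P_2(V) = h^0(V, \omega_V^{\otimes 2})$ both vanish. My task therefore reduces to verifying these two vanishings for any $\ka$-unirational surface.

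First, I would replace $V$ by a smooth projective birational model, which exists by resolution of singularities in characteristic zero; both $\ka$-rationality and $\ka$-unirationality are birational invariants, so nothing is lost. Given a dominant rational map $f : \Pro^2_\ka \dashrightarrow V$, I would resolve its indeterminacies via a sequence of blowups $\pi : Y \to \Pro^2_\ka$ to obtain a dominant morphism $\tilde f : Y \to V$ of smooth projective surfaces. The surface $Y$, being a successive blowup of $\Pro^2_\ka$, is $\ka$-rational, so in particular $q(Y) = 0$ and $P_n(Y) = 0$ for every $n \geq 1$.

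Next I would transfer these vanishings from $Y$ to $V$. Because $\ka$ has characteristic zero, the generically finite morphism $\tilde f$ is generically étale, so the pullback map on differentials is injective at the generic point. This promotes to injectivity of pullback on global sections:
\[
\tilde f^* : H^0(V, \Omega^1_V) \hookrightarrow H^0(Y, \Omega^1_Y), \qquad
\tilde f^* : H^0(V, \omega_V^{\otimes n}) \hookrightarrow H^0(Y, \omega_Y^{\otimes n}),
\]
since a nonzero global section whose pullback were identically zero would have to vanish on the dense image of $\tilde f$, contradicting nonvanishing at a generic point. Together with the vanishings on $Y$, this forces $P_n(V) = 0$ for all $n$ and $h^{1,0}(V) = 0$; Hodge symmetry in characteristic zero then yields $q(V) = h^{0,1}(V) = h^{1,0}(V) = 0$. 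Castelnuovo's criterion concludes that $V$ is $\ka$-rational.

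The main obstacle, and the step where the characteristic-zero hypothesis is essential, is precisely the injectivity of pullback on pluricanonical forms: in positive characteristic a dominant generically finite morphism can be purely inseparable (e.g.\ a Frobenius), killing differential forms under pullback, which is why Castelnuovo's theorem fails there, as witnessed by the classical Zariski and Shioda examples of non-rational unirational surfaces. Everything else in the argument is either formal manipulation with blowups and divisors or a direct appeal to Castelnuovo's rationality criterion itself.
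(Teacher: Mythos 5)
Your argument is correct and is exactly the route the paper itself indicates: the paper gives no proof but notes the result "follows from Castelnuovo's rationality criterion," and your proof is the standard deduction — resolve the dominant map $\Pro^2_{\ka}\dashrightarrow V$ to a morphism from a rational surface, use generic \'etaleness in characteristic zero to pull back $1$-forms and pluricanonical forms injectively, conclude $q(V)=P_2(V)=0$, and apply the criterion. No gaps; the only point worth keeping explicit, which you do, is that Hodge symmetry (or some substitute) is needed to pass from $h^{1,0}(V)=0$ to $q(V)=h^{0,1}(V)=0$.
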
 

For $n \geqslant 3$ there are several constructions of $\ka$-unirational and non-$\ka$-rational varieties (see \cite{AM72}, \cite{CG72} and \cite{IM71}) even if $\ka$ is algebraically closed.

Note that if $V$ is $\ka$-rational then its rational function field is isomorphic \mbox{to $K = \ka(x_1, x_2, \ldots, x_n)$}, and if $V$ is $\ka$-unirational then its rational function field is a~subfield of $K$ of transcendence degree $n$. Therefore results about $\ka$-unirationality can be easily reformulated in terms of subfields of $K$. One important class of subfields of $K$ is fields of invariants $K^G$ under the action of a finite group $G$ of $\Aut_{\ka}(K)$. In this case $G$ acts on $\Pro^n_{\ka}$ by birational selfmaps. One can regularize this action, and find a $\ka$-rational variety $V$, such that $G$ faithfully acts on $V$ by automorphisms. The field of invariants~$K^G$ corresponds to the quotient $V / G$. One has~$K^G \cong \ka(y_1, y_2, \ldots, y_n)$ if and only if $V / G$ is $\ka$-rational. A variety birationally equivalent to a quotient of a $\ka$-rational variety by a finite group of its automorphisms is called \textit{Galois $\ka$-unirational}.

If $n = 1$ then any Galois $\ka$-unirational curve is $\ka$-rational by Theorem \ref{Luroth}, and \mbox{if~$n = 2$}, $\Char \ka = 0$ and $\ka$ is algebraically closed then any Galois $\ka$-unirational surface is $\ka$-rational by Theorem \ref{Castelnuovo}. For $n \geqslant 3$ an example of Galois $\ka$-unirational and non-$\ka$-rational variety is constructed in \cite[Appendix B]{CG72}. The aim of this paper is to study Galois $\ka$-unirational surfaces over algebraically nonclosed fields of characteristic $0$. Thus we study quotients of $\ka$-rational surfaces by finite automorphisms groups.

Examples of $\ka$-unirational and non-$\ka$-rational surfaces for algebraically nonclosed fields are well-known. For instance, any del Pezzo surface of degree $4$ is $\ka$-unirational (see \cite[Theorem~IV.7.8]{Man74}), but not all these surfaces are $\ka$-rational. Moreover, any such surface is birationally equivalent to a quotient of $\ka$-rational surface by an involution. Thus we want to find possibilities for finite groups $G$ acting on $\ka$-rational surfaces $S$, such that the quotient $S / G$ is not $\ka$-rational.

To start solving this problem we should introduce some notions of minimal model program. A smooth surface $S$ is called \textit{minimal} (resp. \textit{$G$-minimal}) if any birational morphism (resp. $G$-equivariant morphism) of smooth surfaces $S \rightarrow S'$ is an isomorphism. The following theorem is an important criterion of $\ka$-rationality over an arbitrary perfect field $\ka$.

\begin{theorem}[{\cite[Chapter 4]{Isk96}}]
\label{ratcrit}
A minimal rational surface $X$ over a perfect field $\ka$ is $\ka$-rational if and only if the following two conditions are satisfied:

(i) $X(\ka) \neq \varnothing$;

(ii) $K_X^2 \geqslant 5$.
\end{theorem}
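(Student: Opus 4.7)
The plan is to invoke the classification of minimal geometrically rational surfaces over a perfect field, due to Enriques, Manin and Iskovskikh. Any minimal smooth rational surface $X$ falls into one of two classes: either (a) a del Pezzo surface with $\Pic(X) = \Z \cdot K_X$, or (b) a relatively minimal conic bundle $\pi \colon X \to C$ over a smooth rational curve $C$ with $\Pic(X)$ of rank $2$, generated by $K_X$ and a fiber class. In case (a) the degree is $K_X^2 \in \{1,\ldots,9\}$, in case (b) the anticanonical degree is $K_X^2 = 8 - n$ where $n$ is the number of geometric singular fibers. Thus the theorem reduces to checking the two conditions in each slice of this list.

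For the necessity direction, the existence of a $\ka$-point is immediate, since $\Pro^2_{\ka}$ has dense $\ka$-points and one can find one whose image under a birational map $\Pro^2 \dashrightarrow X$ is a smooth $\ka$-point of $X$; alternatively, resolve the indeterminacy and use that the blow-up centers and exceptional locus are defined over $\ka$. The degree bound $K_X^2 \geqslant 5$ is the harder half: one must show that every minimal del Pezzo surface of degree $\leqslant 4$ and every conic bundle with $K_X^2 \leqslant 4$ is non-$\ka$-rational. My plan here is to invoke the Noether--Fano--Iskovskikh method: given a hypothetical birational map $\varphi \colon X \dashrightarrow \Pro^2_{\ka}$, analyze the linear system $\varphi^{-1}_*|\mathcal{O}_{\Pro^2}(1)|$ and run a maximal-singularity argument together with the untwisting via elementary Sarkisov links. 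Minimality of $X$ (i.e. $\Pic(X) = \Z$ in case (a), rank $2$ generated as above in case (b)) forces the multiplicities of the base locus to violate the Noether inequality, giving the desired contradiction.

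For the sufficiency direction, go case by case through minimal $X$ with $K_X^2 \geqslant 5$ and $X(\ka) \neq \varnothing$. When $X$ is a del Pezzo surface of degree $9$, minimality implies $X \cong \Pro^2$ (since a nontrivial Severi--Brauer surface has no $\ka$-point). For degree $8$, a minimal such surface with a $\ka$-point is a smooth quadric in $\Pro^3_{\ka}$, and projection from the $\ka$-point gives a birational map to $\Pro^2_{\ka}$. For degree $6$, one shows that a $\ka$-point lies off the six $(-1)$-curves after a suitable modification, and the classical blow-up/blow-down picture (or contracting a Galois-stable set of exceptional curves to reach $\Pro^2$) yields $\ka$-rationality; for degree $5$, the presence of a $\ka$-point is even more forgiving, since every del Pezzo surface of degree $5$ over $\ka$ has a $\ka$-point (a classical result of Enriques--Swinnerton-Dyer) and is $\ka$-rational. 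For conic bundles with $K_X^2 \geqslant 5$, the base is $\Pro^1_{\ka}$ (as $C$ is $\ka$-rational, having few degenerate fibers), and a $\ka$-point on $X$ together with a smooth fiber yields a birational map to $\Pro^1_{\ka} \times \Pro^1_{\ka}$ via elementary transformations.

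The main obstacle, as expected, is the necessity of $K_X^2 \geqslant 5$: the Sarkisov/Noether--Fano analysis is technical, and in degrees $1$ and $2$ the presence of exceptional classes of small self-intersection (e.g. $(-1)$-curves on the geometric model that are permuted by $\Gal(\kka/\ka)$ without descending) requires care. The sufficiency half, by contrast, is essentially a finite check enabled by the classification and the good geometry of degree-$5$-or-higher del Pezzo surfaces and conic bundles with few singular fibers.
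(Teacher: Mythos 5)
The paper does not prove this statement: it is imported verbatim from Iskovskikh's work (\cite[Chapter 4]{Isk96}), so there is no internal proof to compare against. Your outline follows exactly the route of that source -- the Enriques--Manin--Iskovskikh classification of minimal rational surfaces, the Lang--Nishimura argument for the $\ka$-point, the Noether--Fano/Sarkisov analysis for non-rationality in degree $\leqslant 4$, and the case-by-case check for degree $\geqslant 5$ -- and is correct as a plan, with the understanding that essentially all of the content lies in the maximal-singularity half you have only named. One small imprecision: for a minimal del Pezzo surface one has $\Pic(X) \cong \Z$, but it need not be generated by $K_X$ (e.g.\ $X = \Pro^2_{\ka}$), and likewise the rank-two Picard group of a minimal conic bundle is generated by $K_X$ and the fibre class only up to finite index; neither affects the structure of the argument.
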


If $S$ is a smooth surface and $G \subset \Aut_{\ka}(S)$ then there exists a $G$-equivariant morphism $S \rightarrow X$ such that $X$ is $G$-minimal. Moreover, the quotients $S / G$ and $X / G$ are birationally equivalent.
By \cite[Theorem 1]{Isk79} any $G$-minimal \textit{geometrically rational} (i.e. $\XX = X \otimes_{\ka} \kka$ is $\kka$-rational) surface $X$ either is a del Pezzo surface such that $\rho(X)^G = 1$, or admits a~structure of a $G$-equivariant conic bundle $X \rightarrow B$ such that~$\rho(X)^G = 2$ and $B$ is a~smooth curve with genus $0$.

Quotients of conic bundles are considered in \cite{Tr16a}. One of the main results of \cite{Tr16a} is the following.

\begin{theorem}[{\cite[cf. Theorem 1.3]{Tr16a}}]
\label{Cbundlebirat}

Let $\ka$ be a field of characteristic zero, $X$ be a $\ka$-rational surface admitting a structure of a conic bundle, and $G$ be a finite group acting on $X$. Then the quotient $X / G$ is $\ka$-birationally equivalent to a quotient of $\ka$-rational conic bundle by a group $H$, where $H$ is a cyclic group $\CG_{2^k}$ of order~$2^k$, dihedral group $\DG_{2^k}$ of order~$2^k$, alternating group~$\AG_4$ of degree $4$, symmetric group $\SG_4$ of degree $4$ or alternating group $\AG_5$ of degree $5$, and there is a surjective homomorphism $G \rightarrow H$.
\end{theorem}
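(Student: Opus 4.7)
\medskip

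\noindent\textbf{Proof proposal.} The plan is to reduce to a $G$-minimal conic bundle, analyse the group in the short exact sequence coming from the base, and then iteratively kill ``tame'' normal subgroups whose quotient preserves $\ka$-rationality of the conic bundle, until the remaining factor lands in the prescribed list.

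First I would apply the $G$-equivariant minimal model program to $X$; this replaces $X$ by a $G$-minimal surface $X'$ without changing the $\ka$-birational class of the quotient. Since $X$ is $\ka$-rational with a conic bundle structure and in particular geometrically rational, Iskovskikh's theorem (\cite{Isk79}, quoted in the introduction) tells us we are in the $\rho(X')^G = 2$ case, so $X'$ carries a $G$-equivariant conic bundle $\pi \colon X' \to B$ with $B$ a smooth curve of genus $0$; because $X$ is $\ka$-rational, $B$ has a $\ka$-point and hence $B \cong \Pro^1_{\ka}$. Let $N \trianglelefteq G$ be the kernel of the action on $B$, and let $Q = G / N$, giving a short exact sequence
\begin{equation*}
1 \longrightarrow N \longrightarrow G \longrightarrow Q \longrightarrow 1,
\end{equation*}
where $Q \hookrightarrow \Aut_{\ka}(B) \subset \mathrm{PGL}_2(\kka)$ and $N$ acts on the generic fiber of $\pi$, a conic over $\ka(B)$, hence again embeds into $\mathrm{PGL}_2$ over the corresponding algebraic closure. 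By the classical Klein classification, both $N$ and $Q$ belong to the list $\CG_n$, $\DG_{2n}$, $\AG_4$, $\SG_4$, $\AG_5$.

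Next I would build a normal subgroup $K \trianglelefteq G$ to quotient out first, chosen so that $X' / K$ is again a $\ka$-rational conic bundle and $G / K$ lies in the list $\CG_{2^k}$, $\DG_{2^k}$, $\AG_4$, $\SG_4$, $\AG_5$. The principle is that an odd-order cyclic automorphism of $\Pro^1$ has two fixed points, so its action on a (possibly singular) conic fiber is tame and the quotient of a conic bundle by such an automorphism, after minimal resolution, is again a smooth conic bundle over a rational base; iterating with the analogous statement for odd-rotation parts of dihedral groups allows one to strip the odd parts from both $N$ and $Q$ and leaves only the five types above. The groups $\AG_4$, $\SG_4$, $\AG_5$ admit no such simplification and must be kept as is. Writing the quotient as $X / G = (X' / K) / (G / K)$ with $H := G / K$ gives the required surjection and the desired conic bundle model.

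The main obstacle is step three: proving that the intermediate surface $X' / K$ really is a \emph{$\ka$-rational} conic bundle rather than only a geometrically rational one. This requires a careful local analysis at the (cyclic) quotient singularities that appear over fixed points in smooth fibers and at the intersection of components in degenerate fibers, followed by an equivariant minimal resolution producing a smooth conic bundle. One must then check the hypotheses of Theorem~\ref{ratcrit} on suitable relatively minimal models, in particular exhibit $\ka$-points and control the resulting Picard number and intersection form; alternatively one can produce an explicit $\ka$-birational section by descending the two fixed points of an odd-order generator. The case distinction over the types of degenerate fibers and the interaction between $N$ and $Q$ inside $G$ is where the bulk of the bookkeeping lies, but once the reduction is established for each of the five possibilities the conclusion follows by transfinite (finite) induction on $|G|$.
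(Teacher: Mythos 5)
First, a point of reference: the paper does not prove Theorem \ref{Cbundlebirat} here at all --- it is imported (with a ``cf.'') from \cite[Theorem 1.3]{Tr16a} --- so there is no in-paper argument to compare yours against. Judged on its own, your outline has the right general shape (separate the fiberwise action from the action on the base, quotient out normal subgroups, track $\ka$-rationality), but it has two genuine gaps. The first is the claim that Iskovskikh's dichotomy forces the $G$-minimal model $X'$ into the conic-bundle branch with $\rho(X')^G = 2$. The hypothesis ``admitting a structure of a conic bundle'' is not $G$-equivariant, and even when it is, the $G$-MMP can perfectly well terminate at a del Pezzo surface with $\rho(X')^G = 1$ (for instance $\Pro^1\times\Pro^1$ with the rulings swapped, or a $\ka$-rational del Pezzo surface of degree $\leqslant 4$ with $G$-invariant Picard number $1$). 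That branch must be handled separately --- for $K_{X'}^2 \geqslant 5$ the quotient is $\ka$-rational and one may take $H$ trivial, while for lower degree one needs the del Pezzo quotient results --- and your argument silently discards it.

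The second and more serious gap is that the group-theoretic reduction is wrong as stated. Stripping only the odd parts of $N$ and $Q$ does not land $G$ in the prescribed list: take $N \cong \CG_4$ acting fiberwise and $Q \cong \CG_4$ acting on the base with $G \cong \CG_4 \times \CG_4$, or $N \cong Q \cong \AG_4$ with $G \cong \AG_4 \times \AG_4$; these have no nontrivial odd-order normal subgroups and are not of the form $\CG_{2^k}$, $\DG_{2^k}$, $\AG_4$, $\SG_4$ or $\AG_5$. The list consists of the finite subgroups of $\mathrm{PGL}_2$ modulo their maximal normal odd cyclic subgroups, not of extensions of two such groups. So the correct reduction (and the one carried out in \cite{Tr16a}) must first quotient by the \emph{entire} kernel $N$ of the action on the base --- producing a conic bundle over the same base on which $Q = G/N \subset \mathrm{PGL}_2(\ka)$ acts --- and only afterwards strip the odd cyclic normal subgroups of $Q$. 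But quotienting by all of $N$, including its $2$-part, is precisely where one must prove that the intermediate surface is still a $\ka$-rational conic bundle; you correctly flag this as ``the main obstacle'' and then leave it unproved. Since that step, together with the corresponding rationality statement for the odd cyclic quotients over the base, is the substantive content of the theorem, what you have is a plausible programme rather than a proof.
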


\begin{remark}
\label{Cbundlebiratrem}
Actually, from the proof of \cite[cf. Theorem 1.3]{Tr16a} one can see that the condition of $\ka$-rationality of $X$ in Theorem \ref{Cbundlebirat} can be replaced by geometrical rationality.
\end{remark}

We need the following definition (see \cite{Al94}) to state the other main result of~\cite{Tr16a}.

\begin{definition}
\label{birunboundness}
A class $\mathfrak{V}$ of varieties is \textit{$\ka$-birationally bounded} if there is a morphism $\varphi: \mathcal{X} \rightarrow \mathcal{S}$ between algebraic schemes of finite type such that every member of $\mathfrak{V}$ \mbox{is $\ka$-birationally} equivalent to one of the geometric fibres of $\varphi$. We say that $\mathfrak{V}$ \mbox{is \textit{$\ka$-birationally unbounded}} if it is not $\ka$-birationally bounded.
\end{definition}

\begin{theorem}[{\cite[Theorem 1.8]{Tr16a}}]
\label{Unboundness}
Let $\ka$ be a field of characteristic zero such that not all elements of $\ka$ are squares and $G$ be a finite group of automorphisms of $\Pro^1_{\ka}$. Then the class of $G$-quotients of $\ka$-rational conic bundles is $\ka$-birationally unbounded in the following cases:

\begin{itemize}

\item if $\ka$ contains $\xi_k = e^{\frac{2\pi \ii}{k}}$ and $G$ is a cyclic group $\CG_{2k}$ of order $2k$;

\item if $\ka$ contains $\cos \frac{2\pi}{k}$ and $G$ is a dihedral group $\DG_{2k}$ of order $2k$;

\item if $\ka$ contains $\ii$ and $G$ is an alternating group $\AG_4$ of degree $4$;

\item if $\ka$ contains $\ii$ or $\ii \sqrt{2}$ and $G$ is a symmetric group $\SG_4$ of degree~$4$;

\item if $\ka$ contains $\ii$ and $\sqrt{5}$ and $G$ is an alternating group $\AG_5$ of degree $5$.

\end{itemize}

\end{theorem}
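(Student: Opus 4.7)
The strategy is to exhibit, for each group $G$ appearing in the list, an infinite family $\{X_d\}$ of $G$-equivariant $\ka$-rational conic bundles whose $G$-quotients fall into infinitely many $\ka$-birational equivalence classes, contradicting boundedness in the sense of Definition~\ref{birunboundness}. The arithmetic hypotheses on $\ka$ ($\xi_k \in \ka$, $\cos(2\pi/k) \in \ka$, $\ii \in \ka$, $\sqrt{5} \in \ka$, etc.) are precisely the conditions under which the abstract group $G$ embeds into $\Aut(\Pro^1_{\ka}) = \mathrm{PGL}_2(\ka)$, so once a faithful action $G \hookrightarrow \Aut(B)$ on $B = \Pro^1_{\ka}$ is fixed, the quotient $B/G$ is again isomorphic to $\Pro^1_{\ka}$ (the required $\ka$-point being provided by a $G$-orbit of rational fixed points visible from the same hypotheses). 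The assumption that $\ka$ has a non-square element is what supplies the non-trivial quadratic twists which will make the resulting family genuinely unbounded.

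Given a non-square $d \in \ka^*$, I would construct a $G$-equivariant conic bundle $\pi_d : X_d \to B$ whose generic fibre is birationally the conic $u^2 - d v^2 = f(t) w^2$, where $f$ is a $G$-semi-invariant section of some line bundle on $B$ whose zero locus is a union of $G$-orbits chosen large enough. The $G$-action on $B$ lifts to $X_d$ by construction, and $X_d$ is $\ka$-rational: after base change to $\ka(\sqrt{d})$ the bundle acquires a section, and combined with components of its degenerate fibres this produces enough $\ka$-points on a relatively minimal model to apply Theorem~\ref{ratcrit}. Taking the quotient, the rational map $Y_d = X_d / G \dashrightarrow B/G \cong \Pro^1_{\ka}$ can be resolved $G$-equivariantly and contracted to a relatively minimal conic bundle $\widetilde{Y}_d \to \Pro^1_{\ka}$ whose discriminant carries the residue of a quaternion algebra encoding the class of $d$ in $\mathrm{Br}(\ka(\Pro^1_{\ka}))$.

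The main obstacle, as I see it, is to prove that infinitely many of the $\widetilde{Y}_d$ are pairwise non-$\ka$-birational. For this I would appeal to Iskovskikh's theory of birational links between relatively minimal conic bundles (see \cite[Chapter 4]{Isk96}): every such link decomposes into elementary transformations and links through del Pezzo surfaces of small degree. Elementary transformations preserve the residue class of the discriminant up to a controlled ambiguity, while del Pezzo links are excluded once $K_{\widetilde{Y}_d}^2$ is driven sufficiently negative by taking $f$ to vanish on a large number of $G$-orbits. Letting $d$ range over $\ka^*/(\ka^*)^2$, and if necessary also varying the orbit configuration in $f$, one exhibits infinitely many pairwise inequivalent residue classes, and hence infinitely many $\ka$-birational classes among the $\widetilde{Y}_d$, establishing the claimed $\ka$-birational unboundness.
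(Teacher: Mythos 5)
This theorem is imported from \cite[Theorem 1.8]{Tr16a} and is not reproved in the present paper, so there is no in-text proof to compare against; judging your proposal on its own terms, it has a fatal gap at its very first step. The surface $X_d$ you construct, a conic bundle over $B \cong \Pro^1_{\ka}$ with generic fibre $u^2 - dv^2 = f(t)w^2$ and with $f$ vanishing on a large union of $G$-orbits, is \emph{not} $\ka$-rational. Each degenerate fibre of $X_d \to B$ is the pair of lines $u = \pm\sqrt{d}\,v$, whose components are interchanged by $\Gal\bigl(\ka(\sqrt{d})/\ka\bigr)$; hence no Galois-invariant set of disjoint components can be contracted over $\ka$, the bundle is relatively minimal, and once $f$ has at least four zeros one gets $K_{X_d}^2 \leqslant 4$, so $X_d$ is minimal (Theorem \ref{MinCB}) and non-$\ka$-rational by Theorem \ref{ratcrit} --- this is precisely Iskovskikh's classical family of non-rational conic bundles. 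Your justification (``after base change to $\ka(\sqrt{d})$ the bundle acquires a section \ldots enough $\ka$-points'') confuses $\ka(\sqrt{d})$-rationality and density of $\ka$-points with $\ka$-rationality; Theorem \ref{ratcrit} requires $K^2 \geqslant 5$ on a minimal model, not merely the existence of points. Since the theorem is specifically about quotients of \emph{$\ka$-rational} conic bundles, this is not a repairable detail: the entire difficulty is the tension between $\ka$-rationality of $X$ (which forces its minimal model to have $K^2 \geqslant 5$, i.e.\ essentially all degenerate fibres of $X$ must have individually $\ka$-rational, contractible components) and the requirement that $X/G$ be a conic bundle with arbitrarily many degenerate fibres whose components are \emph{not} separately defined over $\ka$. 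The quadratic twist by the non-square $d$ must therefore be arranged to appear only after passing to the quotient --- through the interplay of the $G$-action and the Galois action on the components of degenerate fibres, or through the ramification of $X \to X/G$ --- and not already on $X$ itself, which is where your construction places it.

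A secondary problem: you propose to separate the quotients birationally by letting $d$ range over $\ka^*/(\ka^*)^2$ and comparing residues of the associated quaternion algebras. The hypothesis of the theorem is only that $\ka$ has \emph{one} non-square; for $\ka = \R$ the group $\ka^*/(\ka^*)^2$ has order two, so this index set is finite and cannot produce unboundedness. The correct discrete invariant is simply $K^2$ of a relatively minimal model (equivalently the number of degenerate geometric fibres), which is a birational invariant for conic bundles with $K^2 \leqslant 0$ by Iskovskikh's rigidity theory --- the part of your argument excluding del Pezzo links is sound --- and unboundedness must come from growing the number of $G$-orbits in the degeneracy locus, a knob you mention only as a fallback. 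Your reduction of the arithmetic hypotheses on $\ka$ to the embeddability of $G$ into $\mathrm{PGL}_2(\ka)$ is a reasonable reading of their role.
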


In particular, it means that if there is at least one non-square in $\ka$ then the class of Galois $\ka$-unirational surfaces is $\ka$-birationally unbounded. For example, if $\ka$ is the field of real numbers $\mathbb{R}$ then each conic bundle $X \rightarrow \Pro^1_{\mathbb{R}}$ such that $X(\mathbb{R}) \ne \varnothing$ is Galois \mbox{$\mathbb{R}$-unirational} by \cite[Corollary 4.4]{Isk67}. But the class of conic bundles is $\mathbb{R}$-birationally unbounded by \cite[Theorem 1.6]{Isk67}. On the other hand all quotients of conic bundles by finite automorphism groups are $\ka$-birationally equivalent to surfaces admitting a structure of conic bundle by \cite[Theorem 4.1]{Tr16a}.

The class of del Pezzo surfaces is $\ka$-birationally bounded itself. Therefore quotients of del Pezzo surfaces are $\ka$-birationally bounded too. But we want to find the possibilities, when such quotients are not $\ka$-rational, and classify these quotients up to $\ka$-birational isomorphisms.

Quotients of del Pezzo surfaces of degree greater than $1$ are considered in the author's papers \cite{Tr14}, \cite{Tr18a}, \cite{Tr16b} and \cite{Tr18b}. In this paper we study quotients of del Pezzo surfaces of degree $1$. We collect results about quotients of all del Pezzo surfaces in the following theorem.

\begin{theorem}
\label{main}
Let $\ka$ be a field of characteristic zero, $X$ be a del Pezzo surface of degree~$d = K_X^2$ over $\ka$, and $G$ be a finite subgroup of $\Aut_{\ka}(X)$ such that there is a smooth $\ka$-point on $X / G$. Then the quotient~$X / G$ is $\ka$-rational in all cases except the cases listed below. In those cases the quotient $X / G$ can be non-$\ka$-rational for a suitable field $\ka$:
\begin{enumerate}
\item $d \leqslant 4$, and $G$ is a trivial group;
\item $d = 4$, the order of $G$ is $2$ or $4$, and all nontrivial elements of $G$ have only isolated fixed points;
\item $d = 3$, the order of $G$ is $3$, and all nontrivial elements of $G$ have only isolated fixed points;
\item $d = 2$, the group $G$ has order $2$ or $3$, or $G$ is a non-abelian group of order $6$, and all nontrivial elements of $G$ have only isolated fixed points;
\item $d = 2$, the group $G$ has order $2$ or $4$, or $G$ is a non-abelian group of order $8$, and all nontrivial elements of $G$, except a unique involution, have only isolated fixed points;
\item $d = 1$, the group $G$ is a cyclic group of order $2$ or $6$, and all nontrivial elements of $G$, except a unique involution, have only isolated fixed points;
\item $d = 1$, and $G$ is a cyclic group of order $3$.
\end{enumerate}
Moreover, for these cases if a nontrivial element has a curve of fixed points then this curve is elliptic.

For each of those groups there exists an example of a non-$\ka$-rational quotient $X / G$ for a suitable field $\ka$.
\end{theorem}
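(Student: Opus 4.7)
The plan is to assemble the statement from the author's prior work on degrees $d \geq 2$ together with the new degree $d=1$ analysis that forms the body of the present paper. The parts concerning $d \geq 5$ (all such quotients $\ka$-rational when a smooth $\ka$-point exists) and $d \in \{4,3,2\}$ are established in \cite{Tr14}, \cite{Tr18a}, \cite{Tr16b} and \cite{Tr18b}; so the real work is to treat $d = 1$ and then combine.

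For $d = 1$, I would first fix a del Pezzo surface $X$ of degree $1$ over $\ka$ and a finite subgroup $G \subset \Aut_{\ka}(X)$. The key structural input is that $|-K_X|$ has a unique base point $P \in X(\ka)$, which is automatically $G$-fixed and smooth; this already supplies a smooth $\ka$-point on $X/G$ and sharply constrains $G$ via its action on $T_PX$. Using the double cover description $X \to \Pro(1,1,2)$ branched along a sextic, I would identify the Bertini involution and, together with the Dolgachev--Iskovskikh classification of automorphism groups in degree $1$, enumerate the possibilities for $G$ that actually occur over $\ka$.

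Next, for each $G$ \emph{not} in the exceptional list (6)--(7), the plan is to verify that $X/G$ is $\ka$-rational. For every such concrete $G$, I would analyse the fixed-point locus of each nontrivial $g \in G$, pass to a $G$-equivariant resolution $\widetilde{X}$, and compute $\widetilde{X}/G$ together with its minimal model. I would then either verify the criterion of Theorem \ref{ratcrit} on that minimal model (i.e. $K^2 \geq 5$ and a $\ka$-point, the latter being automatic thanks to the image of $P$), or else exhibit a conic bundle structure on the quotient and invoke Theorem \ref{Cbundlebirat} with Remark \ref{Cbundlebiratrem} to reduce to groups already excluded. The cases (6)--(7) are exactly where these two routes break down: the minimal model has $K^2 \leq 4$ and no conic bundle reduction beats the list of Theorem \ref{Cbundlebirat}, so non-rational examples become possible.

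For the remaining cases (6) and (7), I would construct explicit non-$\ka$-rational quotients for each $G \in \{\CG_2,\CG_6\}$ in case (6) and $G = \CG_3$ in case (7), working over $\ka = \R$ or a suitable number field where the obstruction survives, and choosing $X$ of degree $1$ with a $G$-action for which $\rho(X)^G = 1$. Non-$\ka$-rationality of $X/G$ would be certified either by identifying a minimal model that is a del Pezzo surface of degree $\leq 4$ violating Theorem \ref{ratcrit}, or by exhibiting a conic bundle model with a nontrivial $\Gal(\kka/\ka)$-module obstruction on its Picard lattice. The main obstacle is precisely this last step: constructing and verifying the non-rational examples in the $\CG_3$ and $\CG_6$ cases, where the interplay between $\rho(X)^G = 1$, the fixed-point structure (an elliptic curve of fixed points for the involution in the $\CG_6$ case) and the Galois-module structure on the resolved quotient must all be controlled simultaneously. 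The final clause that any fixed-point curve is elliptic will then fall out of the fixed-point analysis, since such a curve on a degree $1$ del Pezzo surface must be a member of $|-K_X|$, whose general member is a smooth genus-one curve.
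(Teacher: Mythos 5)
Your overall decomposition --- quoting the prior papers for $d\geqslant 2$, classifying the degree-$1$ automorphism groups via the Bertini involution and the faithful action on $T_pX$, proving rationality case by case for the non-exceptional groups by resolving the quotient and either reaching $K^2\geqslant 5$ or a conic bundle, and then building explicit examples for the exceptional groups --- is exactly the route the paper takes. However, two of your claimed shortcuts do not work as stated. First, the base point $p$ of $|-K_X|$ does \emph{not} in general supply a smooth $\ka$-point on $X/G$: unless $G$ acts on $T_pX$ as a group generated by pseudo-reflections, the image of $p$ is a quotient singularity (an $A_2$-point for type $\mathrm{III}$, a $\frac{1}{3}(1,1)$-point for $\alpha$), and a singular $\ka$-point is useless for Theorem \ref{ratcrit}. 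This is precisely why the paper has to produce $\ka$-points elsewhere --- via the images of the contracted $(-1)$-curves on the intermediate cubic surface or conic bundle models together with the unirationality results of Remark \ref{unirationality} --- and why Proposition \ref{DP1} asks for two $\ka$-points on $X$ rather than one. Relatedly, a curve of fixed points need not be anticanonical: the element $\alpha$ fixes a curve of class $-2K_X$, which has genus $2$; the ellipticity claim in the last clause only holds because the $\alpha$- and $\beta$-cases have already been shown to give rational quotients and so drop out of the list.

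Second, and more substantially, the step you yourself flag as the main obstacle --- producing, for the cyclic groups of orders $3$ and $6$, surfaces with $\rho(X)^G=1$ and certifying (non-)$\ka$-rationality of both $X$ and $X/G$ --- is where the paper's genuinely new machinery lives: Section 4 classifies the order-$3$ conjugacy classes in $\mathrm{W}(\mathrm{E}_8)$ by Carter graph and introduces $\davidsstar$-configurations of $(-1)$-curves, which yield checkable criteria for $G$-minimality (Lemma \ref{Davidmin}), for non-$\ka$-rationality (Corollary \ref{Davidnotrat}) and for $\ka$-rationality (Corollary \ref{RatCor}) of $X$. Without some such tool your plan has no effective way to verify $\rho(X)^G=1$ or to control the Galois action on the resolved quotient in concrete examples. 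Note also that $\ka=\R$ cannot serve for the order-$3$ and order-$6$ cases: the relevant obstructions require a non-cube in $\ka$ (and the paper's explicit models require $\omega\in\ka$), so a different choice of field is forced there.
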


\begin{remark}
\label{conditions}
We need a smooth $\ka$-point on $X / G$, to apply Theorem \ref{ratcrit}. Without this condition for the groups not listed in Theorem \ref{main} we can only say that the quotient~$X / G$ is birationally equivalent to a surface $Y$ such that $K_Y^2 \geqslant 5$. For del Pezzo surfaces of degree $d \geqslant 3$ this condition follows from the condition $X(\ka) \neq \varnothing$, since in this case $X$ is $\ka$-unirational (see \cite[Theorems~IV.7.8 and IV.8.1]{Man74}). Therefore $X(\ka)$ is dense, and the set of smooth $\ka$-points on $X / G$ is dense.

Also, for del Pezzo surfaces of degree $d \leqslant 5$ one can omit the condition that $G$ is finite, since in these cases $\Aut_{\ka}(X)$ is finite.
\end{remark}

To prove Theorem \ref{main} we consider quotients of del Pezzo surfaces starting from degree~$d = 9$ and finishing at $d = 1$. The degrees $d \geqslant 2$ are considered in \cite[Theorem 1.3]{Tr14}, \cite[Proposition 3.1]{Tr18a}, \cite[Corollary 1.4]{Tr14}, \cite[Proposition 4.1]{Tr18a}, \cite[Proposition 5.1]{Tr18a}, \cite[Theorem 1.3]{Tr16b} and \cite[Proposition 4.2]{Tr18b}. Also one can find there more precise descriptions of the groups mentioned in Theorem \ref{main}.

\begin{remark}
\label{geq5touse}
Actually, in \cite{Tr16a}, \cite{Tr14} and \cite{Tr18a} it is proved that if $X$ is a smooth geometrically rational surface over $\ka$ such that $K_X^2 \geqslant 5$, $X(\ka) \ne \varnothing$ and $G$ is a finite subgroup of $\Aut_{\ka}(X)$ then the quotient variety $X / G$ is $\ka$-rational (see \cite[Corollary~1.2]{Tr18a}).

Moreover, if $X$ is a smooth geometrically rational surface over $\ka$ such that $K_X^2 \geqslant 5$ and~$G$ is a finite subgroup of $\Aut_{\ka}(X)$ then the quotient~$X / G$ is birationally equivalent to a surface $Y$ such that $K_Y^2 \geqslant 5$.
\end{remark}

Quotients of del Pezzo surfaces of degree $1$ are considered in Proposition \ref{DP1}. Also one can find there more precise descriptions of the groups mentioned in Theorem \ref{main} for the cases with $d = 1$.

Note that in Theorem \ref{main} we do not assume that $X$ is $\ka$-rational. But we are interested in quotients of $\ka$-rational surface to classify Galois $\ka$-unirational surfaces, and obtain corollaries about fields of invariants of $\ka(x_1, x_2)$. Thus we want to study, which possibilities of Theorem \ref{main} hold for $\ka$-rational del Pezzo surfaces. For simplicity, we assume that $X$ is $G$-minimal since any other case can be simply reduced to this case.

\begin{theorem}
\label{mainex}
Let $\ka$ be a field of characteristic zero, $X$ be a $G$-minimal del Pezzo surface of degree $d = K_X^2$ over $\ka$, and $G$ be a finite subgroup mentioned in Theorem \ref{main}.

If $G$ is trivial, or $d = 2$, $|G| = 2$ and $G$ has only isolated fixed points, or $d = 1$, $|G| = 2$ then $X$ and $X / G$ are not $\ka$-rational.

In each of the other cases each of the following options occurs over a suitable field~$\ka$: \mbox{$X$ is $\ka$-rational} and $X / G$ \mbox{is non-$\ka$-rational}, $X$ is non-$\ka$-rational and $X / G$ is non-$\ka$-rational, $X$~is $\ka$-rational and~$X / G$ is $\ka$-rational, $X$~is non-$\ka$-rational and $X / G$ is \mbox{$\ka$-rational}.

\end{theorem}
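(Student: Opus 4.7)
The first assertion follows from Theorem \ref{ratcrit}. If $G$ is trivial, then $X / G = X$ is $G$-minimal, hence minimal, of degree $d \leq 4 < 5$, so Theorem \ref{ratcrit} gives non-$\ka$-rationality at once. For $(d, |G|) = (2, 2)$ with only isolated fixed points the quotient map $\pi \colon X \to X / G$ is étale in codimension one, and a Riemann--Hurwitz computation together with $G$-minimality shows that the minimal resolution $Y$ of $X / G$ is minimal with $K_Y^2 < 5$, to which Theorem \ref{ratcrit} again applies. The case $(d, |G|) = (1, 2)$ splits according to whether the involution has an elliptic curve of fixed points or only isolated ones; in each subcase a parallel argument produces a minimal model of degree at most $4$, so Theorem \ref{ratcrit} once more yields non-$\ka$-rationality of both $X$ and $X / G$.

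For the second assertion I would proceed case by case through the list of $(d, G)$ in Theorem \ref{main} and construct, for each pair, four explicit examples realizing each combination of $\ka$-rationality of $X$ and of $X / G$. In degrees $d \geq 2$, most entries of the resulting $2 \times 2$ tables are already provided by \cite{Tr14}, \cite{Tr18a}, \cite{Tr16b} and \cite{Tr18b}; any remaining entry can be filled by twisting a given example by a suitable Galois cocycle, which preserves the $G$-action while flipping the $\ka$-rationality of $X$ or $X / G$.

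The genuinely new content concerns $d = 1$ with $G = \CG_3$ or $G = \CG_6$. Here I would use the Weierstrass model $y^2 = x^3 + b(z, w)$ in $\Pro(1, 1, 2, 3)$, on which $\CG_3$ acts by $x \mapsto \zeta x$ for $\zeta$ a primitive cube root of unity, and $\CG_6$ is generated by this together with the Bertini involution $y \mapsto -y$. The $\ka$-rationality of $X$ is controlled by the Galois action on $\Pic(\XX)$ and the presence of $\ka$-points, while that of $X / G$ is controlled by the analogous data on a suitable minimal model of $X / G$ extracted in Proposition \ref{DP1}. Varying $b(z, w)$ and the field $\ka$, I would exhibit each of the four combinations explicitly.

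The main obstacle will be the last step for $d = 1$: identifying a smooth minimal model of $X / G$ as a function of the coefficients of $b(z, w)$, and then choosing $b$ so that $X$ and $X / G$ have prescribed $\ka$-rationality. Granted Proposition \ref{DP1}, this reduces to a Galois-module computation in the Picard lattice plus a $\ka$-point search, and the four combinations are realized by tuning $b$ so that these two ingredients can be varied independently.
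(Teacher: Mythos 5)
There is a genuine gap, and in the key new case ($d=1$, $G\cong\CG_3$ or $\CG_6$) your construction starts from the wrong group action. In the model $y^2 = x^3 + b(z,w)$ in $\Pro(1,1,2,3)$ the coordinate you call $x$ has weight $2$, so the action $x \mapsto \zeta x$ is exactly the element $\alpha$ of the paper, and adjoining the Bertini involution gives $\langle \alpha,\beta\rangle$. By Lemma \ref{DP1alpha} these are precisely the subgroups for which the quotient is birationally a surface with $K^2\geqslant 5$ and is $\ka$-rational as soon as $X$ has two $\ka$-points; they are \emph{not} the groups of Theorem \ref{main}(6)--(7). The relevant cyclic groups (types $\mathrm{II}$, $\mathrm{III}$, $\mathrm{IX}$ of Theorem \ref{DP1groupclass}) act on the two weight-$1$ coordinates, i.e.\ nontrivially on the base of the elliptic pencil. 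With your choice of action no non-$\ka$-rational quotient can ever be produced, so the four combinations cannot be realized. Moreover, even with the correct action, verifying $G$-minimality ($\rho(X)^G=1$) and deciding $\ka$-rationality of $X$ and of the minimal model of $X/G$ is the hard part; the paper does this via the classification of order-$3$ conjugacy classes in $\mathrm{W}(\mathrm{E}_8)$ and the combinatorics of $\davidsstar$-configurations (Corollaries \ref{geom3nonrat}, \ref{Davidnotrat}, \ref{Davidmin1}, \ref{Davidmin2}, \ref{RatCor}), applied to explicit sextic equations in Section~5; ``tuning $b$'' is not a proof.

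In the first part there is a second gap: for $(d,|G|)=(2,2)$ and $(1,2)$ you must show that $X$ \emph{itself} is non-$\ka$-rational, but $X$ is only assumed $G$-minimal, not minimal, so Theorem \ref{ratcrit} does not apply to $X$ directly; your Riemann--Hurwitz computation only concerns $X/G$. The paper imports this from \cite[Lemma 5.1, Remark 5.2]{Tr18b}, where the conjugacy class of the involution in the Weyl group is used to bound the degree of a minimal model of $X$. (Also note that for $d=1$, $|G|=2$ the case ``only isolated fixed points'' does not occur in Theorem \ref{main}: such an involution is the Bertini involution, whose quotient is a quadric cone.) Finally, for the remaining cases with $d\geqslant 2$ your suggestion to ``fill any remaining entry by twisting by a Galois cocycle'' is unsubstantiated: a twist changes both $X$ and $X/G$ simultaneously and there is no mechanism offered for controlling their rationality independently; the paper instead cites explicit examples constructed in \cite[Section 6]{Tr18a}, \cite[Section 6]{Tr16b} and \cite[Section 6]{Tr18b}.
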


\begin{proof}

If $G$ is trivial then $X$ and $X / G$ are not $\ka$-rational by Theorem \ref{ratcrit}. The case $d = 2$, $|G| = 2$ is considered in \cite[Remark 4.9]{Tr18b} and \cite[Lemma 5.1]{Tr18b}, and the case $d = 1$, $|G| = 2$ is considered in Remark \ref{DP1typeImin} and \cite[Remark 5.2]{Tr18b}.

For the other cases examples are constructed in \cite[Section 6]{Tr18a}, \cite[Section 6]{Tr16b}, \cite[Section 6]{Tr18b} and Section $5$ below.

\end{proof}

Now we can state the corollaries. We need the following definitions.

\begin{definition}
\label{Eckardtpoint}
A point $p$ on a cubic surface is called \textit{Eckardt point} if there are three $(-1)$-curves passing through $p$.

A point $p$ on a del Pezzo surface of degree $2$ is called \textit{generalized Eckardt point} if there are four $(-1)$-curves passing through $p$.
\end{definition}

\begin{definition}
\label{CubicVIII}
We say that a cubic surface $X$ \textit{has type $\mathrm{VIII}$} if there are three Eckardt points on $\XX$ lying on a line in $\Pro^3_{\ka}$ (this line is not contained in $X$).

Moreover, if these three points are defined over $\ka$ then we say that $X$ \textit{has type $\mathrm{VIII}$-$1$}, and if exactly one of these points is defined over $\ka$ then we say that $X$ \textit{has type $\mathrm{VIII}$-$2$}.
\end{definition}

\begin{remark}
\label{CubicVIIIremark}
Note that for a cubic surface $X$ of type $\mathrm{VIII}$ the group $\Aut\left(\XX\right)$ contains a non-abelian subgroup $\SG_3$ of order $6$ (see \cite[Proposition 9.1.27]{Dol12}). Cubic surfaces with \mbox{$\Aut\left(\XX\right) \cong \SG_3$} are called \textit{cubic surfaces of type $\mathrm{VIII}$} in \cite[Table 9.6]{Dol12}.
\end{remark}

\begin{theorem}
\label{Galunirat}
Let $\ka$ be a field of characteristic zero. Any Galois $\ka$-unirational surface is either $\ka$-rational, or birationally equivalent to a surface admitting a structure of conic bundle, or birationally equivalent to a minimal cubic surface of type $\mathrm{VIII}$. Moreover, this cubic surface has type $\mathrm{VIII}$-$1$ if $\ka$ contains $\omega = e^{\frac{2\pi \ii}{3}}$, and has type $\mathrm{VIII}$-$2$ otherwise. Examples of such cubic surfaces exist for any $\ka$ such that there is at least one non-cube in $\ka$.

\end{theorem}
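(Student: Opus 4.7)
The plan is to start from the definition: a Galois $\ka$-unirational surface $V$ is $\ka$-birational to $S/G$ for some $\ka$-rational surface $S$ and finite $G \subset \Aut_\ka(S)$. First I would pass to a $G$-equivariant resolution of $S$ and run $G$-MMP to obtain a $G$-minimal surface $X$ that is $\ka$-birational to $S$; in particular $X$ is still $\ka$-rational (so geometrically rational with $X(\ka)$ dense), and the quotient $X/G$ is $\ka$-birational to $V$. By Iskovskikh's theorem cited in the introduction, $X$ is either a $G$-conic bundle with $\rho(X)^G = 2$ or a del Pezzo surface with $\rho(X)^G = 1$.

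If $X$ is a conic bundle, Theorem \ref{Cbundlebirat} combined with Remark \ref{Cbundlebiratrem} and \cite[Theorem 4.1]{Tr16a} (noted after Theorem \ref{Unboundness}) show that $X/G$ is $\ka$-birational to a surface admitting a conic bundle structure, yielding the second alternative. If instead $X$ is a del Pezzo, the density of $X(\ka)$ produces smooth $\ka$-points on $X/G$, so Theorem \ref{main} applies: either $V$ is $\ka$-rational (done), or $(d, G)$ lies in the list (1)--(7). Theorem \ref{mainex} rules out the entries where $X$ is forced to be non-$\ka$-rational (trivial $G$; $d=2$, $|G|=2$ with isolated fixed points; $d=1$, $|G|=2$). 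For each remaining $(d, G)$, I would extract the explicit birational description of $X/G$ from \cite{Tr14, Tr18a, Tr16b, Tr18b} and Proposition \ref{DP1}: in every subcase save one, the quotient carries a pencil of rational curves descending from a $G$-invariant pencil on $\XX$, giving a conic bundle structure after $\ka$-birational modification. The lone exception produces, as the $\ka$-minimal model of $X/G$, a cubic surface with three collinear Eckardt points over $\kka$, i.e.\ type $\mathrm{VIII}$ in the sense of Definition \ref{CubicVIII}; the inclusion $\SG_3 \subset \Aut(\XX/G)$ is automatic by Remark \ref{CubicVIIIremark}.

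The $\mathrm{VIII}$-$1$ versus $\mathrm{VIII}$-$2$ dichotomy is a short Galois computation on the three collinear Eckardt points. The natural $\CG_3$ permuting them is defined over $\ka$ exactly when $\omega \in \ka$, so in that case all three Eckardt points are individually $\ka$-rational (type $\mathrm{VIII}$-$1$), whereas otherwise the nontrivial element of $\Gal(\ka(\omega)/\ka)$ swaps two of them and fixes the third (the image of the $G$-fixed point on $X$), which is type $\mathrm{VIII}$-$2$. For the existence of examples whenever $\ka$ admits a non-cube $c$, I would exhibit explicit cubic surfaces such as $x^3 + y^3 + z^3 + c w^3 = 0$ (or a suitable Galois twist when $\omega \notin \ka$) carrying the natural $\SG_3$-action permuting $\{x, y, z\}$, verify minimality and type $\mathrm{VIII}$ directly from the equation, and deduce non-$\ka$-rationality from the non-cube hypothesis. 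The main obstacle is the case-by-case verification in the del Pezzo alternative: for each admissible $(d, G)$ one must trace through the quotient construction to decide whether $X/G$ acquires a $\ka$-conic bundle structure after modification, and single out the one configuration that instead produces the type $\mathrm{VIII}$ cubic.
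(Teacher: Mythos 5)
Your overall strategy coincides with the paper's: reduce via $G$-MMP to a conic bundle or a $G$-minimal del Pezzo surface, dispose of the conic-bundle case by Theorem \ref{Cbundlebirat}, invoke Theorem \ref{main} in the del Pezzo case, discard via Theorem \ref{mainex} the entries where $X$ is forced to be non-$\ka$-rational, and observe that every surviving entry except $d=3$ with $\CG_3$ yields a conic bundle after birational modification, the exception producing a minimal cubic of type $\mathrm{VIII}$ (Lemma \ref{DP3C35}); the $\mathrm{VIII}$-$1$ versus $\mathrm{VIII}$-$2$ dichotomy is settled exactly as in Remark \ref{DP3C35remark} (your identification of the always-rational Eckardt point as ``the image of the $G$-fixed point'' is not quite how it arises --- in the paper it is the residual intersection point $R$ of the line through the two Eckardt points coming from the invariant curves $C_1$, $C_2$, while the $G$-fixed points map to the singularities --- but the conclusion is the same).

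The genuine gap is in the final existence claim. Exhibiting a minimal cubic surface of type $\mathrm{VIII}$ such as $x^3+y^3+z^3+cw^3=0$ and verifying minimality and non-$\ka$-rationality does not prove that such a surface is Galois $\ka$-unirational, which is what the theorem asserts is realized: one must produce a \emph{$\ka$-rational} surface $X$ with a finite group $G$ whose quotient is birationally equivalent to a minimal type $\mathrm{VIII}$ cubic. This implication cannot be taken for granted --- the paper explicitly leaves open (see the Question after Theorem \ref{Galgeomunirat}) whether an arbitrary minimal cubic with a $\ka$-rational Eckardt point is such a quotient. Note also that your diagonal cubic with $c$ a non-cube is itself typically minimal, hence \emph{not} $\ka$-rational, so it cannot serve as the source $X$ either. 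The paper's Example \ref{DP3C35example} does the necessary work: it writes down a cubic $X$ given by $\alpha x^3 - \alpha^2 y^3 + z^3 - \alpha t^3 = 0$ that is $\ka$-rational (it carries a $\ka$-rational triple of disjoint lines and a $\ka$-point), checks $\rho(X)^G=1$ for the order-$3$ action and that the fixed points form a single Galois orbit, and then applies Remark \ref{DP3C35min} to conclude that $X/G$ is birationally a \emph{minimal} type $\mathrm{VIII}$ cubic. Your proposal is missing this construction.
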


\begin{remark}
\label{CBunirat}
Note that any del Pezzo surface $X$ of degree $4$, such that $X(\ka) \neq \varnothing$ is Galois $\ka$-unirational by \cite[Theorem~IV.8.1]{Man74}. But a blowup of $X$ at a $\ka$-point not lying on $(-1)$-curves admits a structure of a conic bundle.

Also note that Theorem \ref{Galunirat} does not state that any surface admitting a conic bundle structure is Galois $\ka$-unirational. Moreover, even $\ka$-unirationality of conic bundles is an open question. 

\end{remark}

\begin{remark}
\label{unirationality}
Let us recall some facts about $\ka$-unirationality. Let $\ka$ be a field of characteristic zero, and $X$ be a del Pezzo surface of degree $d$. Then by \cite[Theorem~IV.8.1]{Man74} if~$d = 4$ or $d = 3$ and $X(\ka) \neq \varnothing$ then $X$ is $\ka$-unirational. If $d = 2$ and there is a~$\ka$-point that is not a point of intersection of four $(-1)$-curves and does not lie on the ramification divisor of the anticanonical \mbox{map $X \rightarrow \Pro^2_{\ka}$}, then $X$ is $\ka$-unirational by \cite[Corollary~18]{STV14}. For $d = 1$ unirationality is not known in general case. Moreover, there are no known examples of $\ka$-unirational del Pezzo surfaces of degree $1$ with $\rho(X) = 1$. But if $d = 1$ and~$X$ admits a~structure of conic bundle then $X$ is $\ka$-unirational by \cite[Theorem~7]{KM17}. Moreover, in \cite[Corollary 8]{KM17} it is shown that if a geometrically rational surface $X$ admits a structure of conic bundle $X \rightarrow \Pro^1_{\ka}$ such that $X(\ka) \ne \varnothing$ and $K_X^2 \geqslant 1$, then $X$ is \mbox{$\ka$-unirational} (it seems that conic bundles with $K_X^2 = 4$ that are not del Pezzo surfaces are not considered in \cite{KM17} and the cited there papers, but one can easily deduce $\ka$-unirationality of such surfaces from \cite[Lemma 7.1]{CT88}).
In all these cases $X(\ka)$ is dense.

For some results on unirationality of del Pezzo surfaces in the case $\Char \ka > 0$ see \cite{Kol02}, \cite{Kn15} and \cite{FvL16}.
\end{remark}

The following corollary immediately follows from Theorem \ref{Galunirat}.

\begin{corollary}
\label{Galuniratcor}
Let $\ka$ be a field of characteristic zero, $X$ be a del Pezzo surface of degree~$d$, such that $X(\ka) \neq \varnothing$ and $\rho(X) = 1$. Then~$X$ is Galois $\ka$-unirational if $d \geqslant 4$, and $X$ is not Galois $\ka$-unirational if $d = 1$, $d = 2$ or~$d = 3$ and $X$ is not a cubic surface of type $\mathrm{VIII}$.
\end{corollary}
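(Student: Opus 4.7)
The plan is to split by the degree~$d$ and invoke the trichotomy from Theorem~\ref{Galunirat}, together with Theorem~\ref{ratcrit} and the classical birational rigidity for minimal del Pezzo surfaces of low degree.

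For $d\geqslant 5$: since $\rho(X)=1$ the surface $X$ is minimal, and with $X(\ka)\neq\varnothing$ and $K_X^2\geqslant 5$ Theorem~\ref{ratcrit} yields that $X$ is $\ka$-rational, so $X$ is trivially Galois $\ka$-unirational (take the quotient of $X$ itself by the trivial group). For $d=4$, I would invoke the fact recalled in the introduction, following \cite[Theorem~IV.7.8]{Man74}, that any del Pezzo surface of degree $4$ with a $\ka$-point is birationally equivalent to a quotient of a $\ka$-rational surface by an involution, giving Galois $\ka$-unirationality.

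For $d\leqslant 3$: since $\rho(X)=1$ the surface $X$ is minimal, and $K_X^2=d<5$ together with Theorem~\ref{ratcrit} shows that $X$ is not $\ka$-rational. By Theorem~\ref{Galunirat}, Galois $\ka$-unirationality of $X$ would force $X$ to be birationally equivalent either to a surface admitting a conic bundle structure or to a minimal cubic surface of type~$\mathrm{VIII}$. Both alternatives are controlled by the Segre--Manin--Iskovskikh birational rigidity (cf.~\cite[Chapter~4]{Isk96}): a minimal del Pezzo surface of degree $\leqslant 3$ with $\rho=1$ is not birationally equivalent to any conic bundle, and two birationally equivalent minimal del Pezzo surfaces of degree $\leqslant 3$ with $\rho=1$ are isomorphic. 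Hence for $d=1$ or $d=2$ neither alternative can occur, and $X$ is not Galois $\ka$-unirational; for $d=3$ the conic bundle alternative is excluded, and the cubic alternative would force $X$ itself to be a minimal cubic of type~$\mathrm{VIII}$, contradicting the hypothesis.

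The main obstacle is the careful appeal to the birational rigidity results in low degree: one must verify that under the hypotheses $\rho(X)=1$ and $X(\ka)\neq\varnothing$ the surface $X$ indeed fits the setup of the classical Iskovskikh--Manin theorems and therefore admits no nontrivial birational model among the remaining Iskovskikh classes. Once this rigidity is in hand, the case analysis by degree using Theorems~\ref{Galunirat} and~\ref{ratcrit} is immediate.
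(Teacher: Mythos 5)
Your proposal is correct and follows essentially the same route as the paper: Theorem \ref{ratcrit} for $d \geqslant 5$, Manin's theorem on quotient presentations of degree-$4$ del Pezzo surfaces for $d = 4$, and the combination of Theorem \ref{Galunirat} with the Iskovskikh birational rigidity of minimal del Pezzo surfaces of degree $\leqslant 3$ for the remaining cases. The only (immaterial) differences are the citation of \cite[Theorem IV.7.8]{Man74} instead of \cite[Theorem IV.8.1]{Man74} for $d=4$ and a slightly more explicit unpacking of what rigidity excludes.
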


Therefore any minimal del Pezzo surface $X$ of degree $2$ with dense $X(\ka)$ or a minimal cubic surface $X$ with $X(\ka) \neq \varnothing$ and with less than three Eckardt points gives us an example of $\ka$-unirational but not Galois $\ka$-unirational surface.

Moreover, the following theorem shows that del Pezzo surfaces of degree $1$, del Pezzo surfaces of degree $2$ without generalized Eckardt points defined over $\ka$ and cubic surfaces without Eckardt points defined over $\ka$ are really far from Galois $\ka$-unirationality.

\begin{theorem}
\label{Galgeomunirat}

Any quotient of a geometrically rational surface by a nontrivial finite group is either birationally equivalent to a conic bundle, or birationally equivalent to a del Pezzo surface $S$ with $K_S^2 \geqslant 4$,  or birationally equivalent to a minimal cubic surface with an Eckardt point defined over $\ka$, or birationally equivalent to a minimal del Pezzo surface of degree $2$ with a generalized Eckardt point defined over~$\ka$.

The other minimal del Pezzo surfaces of degree $3$ and $2$, and minimal del Pezzo surfaces of degree $1$ are not birationally equivalent to a quotient of a geometrically rational surface by a nontrivial finite group.

\end{theorem}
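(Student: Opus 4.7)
The plan is to combine Iskovskikh's classification of $G$-minimal geometrically rational surfaces with the case-by-case birational descriptions of quotients assembled in Theorem \ref{main} and Proposition \ref{DP1}.

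Let $V$ be a geometrically rational $\ka$-surface carrying a faithful action of a nontrivial finite group $G$. After passing to a smooth $G$-equivariant model and running a $G$-equivariant minimal model program, we may assume $V$ is smooth and $G$-minimal without changing the $\ka$-birational class of $V/G$. By \cite[Theorem 1]{Isk79}, either $V$ admits a $G$-equivariant conic bundle structure with $\rho(V)^G = 2$, or $V$ is a del Pezzo surface with $\rho(V)^G = 1$. In the conic bundle case, Theorem \ref{Cbundlebirat} extended via Remark \ref{Cbundlebiratrem}, combined with \cite[Theorem 4.1]{Tr16a}, shows that $V/G$ is $\ka$-birational to a surface admitting a conic bundle structure. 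In the del Pezzo case, set $d = K_V^2$. If $d \geqslant 5$, then Remark \ref{geq5touse} produces a birational model of $V/G$ with $K^2 \geqslant 5$ whose minimal model is a conic bundle or a del Pezzo surface of degree $\geqslant 5$ by \cite[Theorem 1]{Isk79}.

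For $d \in \{4,3,2,1\}$, I would invoke the explicit birational descriptions of $V/G$ established in \cite{Tr14,Tr18a,Tr16b,Tr18b} and in Proposition \ref{DP1}. Inspecting each case of Theorem \ref{main} shows that $V/G$ is $\ka$-birational to one of the four classes of the statement: a conic bundle, a del Pezzo surface of degree $\geqslant 4$, a minimal cubic with an Eckardt point defined over $\ka$ (produced from certain $d = 3$ configurations), or a minimal del Pezzo of degree $2$ with a generalized Eckardt point defined over $\ka$ (produced from certain $d = 2$ configurations). In particular, every non-$\ka$-rational quotient of a del Pezzo of degree $1$ must, by Proposition \ref{DP1}, belong to one of these four classes and never yields a new minimal del Pezzo of degree $1$.

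For the negative part, let $W$ be a minimal del Pezzo surface of degree $1$, or a minimal cubic with no Eckardt point over $\ka$, or a minimal del Pezzo of degree $2$ with no generalized Eckardt point over $\ka$; in each case $\rho(W) = 1$ and $K_W^2 \leqslant 3$. Assuming $W$ is $\ka$-birational to some $V/G$ with $G$ nontrivial, the positive part places $V/G$ in one of the four listed classes. Birational rigidity of minimal del Pezzo surfaces with $\rho = 1$ of degree at most three (Segre--Manin for $d = 3$, and the analogous classical statements of Iskovskikh for $d = 2$ and $d = 1$) rules out birational equivalence between $W$ and a conic bundle or a del Pezzo of degree $\geqslant 4$; hence $W$ itself must be the minimal model of $V/G$, forcing an Eckardt point for $d = 3$ or a generalized Eckardt point for $d = 2$ defined over $\ka$, contradicting the hypotheses. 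For $d = 1$ no class on the list has degree $1$, giving the contradiction directly. The main obstacle is the positive assertion in degree $d = 1$: for each of the cyclic groups $\CG_2$, $\CG_3$, $\CG_6$ appearing in Theorem \ref{main}, one must follow the fixed loci, resolve the singularities of $V/G$ explicitly, and identify the resulting minimal model, verifying that it never yields a new minimal del Pezzo surface of degree $1$. This is precisely the content of Proposition \ref{DP1}.
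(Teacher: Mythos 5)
Your proposal is correct and follows essentially the same route as the paper: reduce via $G$-equivariant MMP and Iskovskikh's classification, send conic-bundle quotients to conic bundles via Theorem \ref{Cbundlebirat} and Remark \ref{Cbundlebiratrem}, handle degree $\geqslant 5$ via Remark \ref{geq5touse}, identify the low-degree quotients case by case (Lemma \ref{DP3C35} for cubics, the degree-$2$ results of \cite{Tr18b}, and Proposition \ref{DP1} for degree $1$), and dispose of the negative part by birational rigidity of minimal del Pezzo surfaces of degree at most $3$ with $\rho = 1$. The paper's own proof is just a terser version of this same argument, deferring the case analysis to the proof of Theorem \ref{Galunirat}.
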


\begin{question}
Is it true that any minimal del Pezzo surface of degree $4$ or more without $\ka$-points, or a minimal cubic surface with an Eckardt point defined over $\ka$, or a minimal del Pezzo surface of degree $2$ with a generalized Eckardt point defined over~$\ka$ is birationally equivalent to a quotient of a geometrically rational surface by a nontrivial finite group?
\end{question}

Now we make some general observations about quotients and fields of invariants. As a direct consequence of Theorem \ref{main} we have the following theorem about groups of odd order.

\begin{corollary}[of Theorem \ref{main}]
\label{Unquotient}
Let $\ka$ be a field of characteristic zero, $X$ be a $\ka$-rational surface, \mbox{$G \subset \Aut_{\ka}(X)$} be a finite subgroup of odd order, and $|G| \neq 3$. Then $X / G$ is $\ka$-rational.
\end{corollary}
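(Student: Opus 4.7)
The plan is to reduce everything to the classifications already assembled in the paper, namely Theorem \ref{main} on the del Pezzo side and Theorem \ref{Cbundlebirat} on the conic bundle side. First I would pass to a $G$-equivariant minimal model: by equivariant MMP for surfaces there is a $G$-equivariant birational morphism $X \rightarrow X'$, where $X'$ is $G$-minimal and $\ka$-birational to $X$. Hence $X'$ is $\ka$-rational, in particular geometrically rational, and $X/G$ is $\ka$-birational to $X'/G$, so it suffices to prove $X'/G$ is $\ka$-rational. By the Iskovskikh dichotomy (\cite[Theorem 1]{Isk79}, recalled in the introduction), either $X'$ is a $G$-minimal del Pezzo surface with $\rho(X')^G = 1$, or $X'$ carries a $G$-equivariant conic bundle structure with $\rho(X')^G = 2$.

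Next I would verify the smooth $\ka$-point hypothesis needed for Theorem \ref{main}. Since $X'$ is $\ka$-rational the set $X'(\ka)$ is Zariski dense, while the union $\bigcup_{g \in G \setminus \{1\}} X'^g$ of the fixed loci of the nontrivial elements of $G$ is a proper closed subset of $X'$ (the action remains faithful after passing to the minimal model, since a nontrivial automorphism cannot vanish on a Zariski open subset). Picking a $\ka$-point $p \in X'$ outside this union produces a smooth $\ka$-point in $X'/G$ as its image, which lets us feed $(X',G)$ into Theorem \ref{main}.

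Now the bulk of the argument is simply reading off the classifications. In the del Pezzo case, the nontrivial groups in the exceptional list of Theorem \ref{main} all have order in $\{2,3,4,6,8\}$; the only odd option is $|G|=3$, which is excluded by hypothesis. For $|G|=1$ the quotient equals $X'$ and is $\ka$-rational. Hence $(X',G)$ is not exceptional and Theorem \ref{main} gives that $X'/G$ is $\ka$-rational. In the conic bundle case, Theorem \ref{Cbundlebirat} together with Remark \ref{Cbundlebiratrem} says that $X'/G$ is $\ka$-birational to $Y/H$, where $Y$ is a $\ka$-rational conic bundle and $H$ is a homomorphic image of $G$ belonging to the list $\{\CG_{2^k},\DG_{2^k},\AG_4,\SG_4,\AG_5\}$. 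Since $|H|$ divides $|G|$ and $|G|$ is odd, $|H|$ is odd too; the only odd-order groups in the list are trivial. Thus $X'/G$ is $\ka$-birational to $Y$, hence $\ka$-rational.

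There is no genuine obstacle: the statement is a bookkeeping corollary of the big classifications in Theorem \ref{main} and Theorem \ref{Cbundlebirat}. The only points that demand a little care are the existence of a smooth $\ka$-point on $X'/G$ (used to unlock Theorem \ref{main}) and the verification that every odd-order group other than $\CG_3$ avoids both exceptional lists, which I have just described.
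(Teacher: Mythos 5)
Your proof is correct and follows essentially the same route as the paper: pass to a $G$-minimal model, invoke the Iskovskikh dichotomy, handle the conic bundle case via Theorem \ref{Cbundlebirat} and the del Pezzo case via Theorem \ref{main}, observing that no odd-order group other than $\CG_3$ (or the trivial group, which is harmless here since $X$ is $\ka$-rational) appears in either exceptional list. You supply two details the paper's terse proof leaves implicit — the existence of a smooth $\ka$-point on the quotient and the explicit order count — but the argument is the same.
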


In terms of the fields of invariants Theorem \ref{Unquotient} can be written in the following way.

\begin{corollary}[of Theorem \ref{main}]
\label{Unfields}
Let $\ka$ be a field of characteristic zero, $G$ be a finite group acting on $K = \ka(x_1, x_2)$ and preserving $\ka$. If~$|G|$ is odd and is not equal to $3$ then $K^G \cong \ka(y_1, y_2)$.
\end{corollary}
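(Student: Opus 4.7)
The plan is to translate the field-theoretic statement into the geometric statement of Corollary \ref{Unquotient} via the classical dictionary between function fields and $\ka$-rational surfaces. First, I would replace $G$ by its image in $\Aut_\ka(K)$, which is harmless since the field of invariants only depends on this image; the order stays odd and remains unequal to $3$ unless the image is trivial (in which case $K^G = K \cong \ka(y_1, y_2)$ and there is nothing to prove). So we may assume $G \hookrightarrow \Aut_\ka(K)$. Identifying $K = \ka(x_1, x_2)$ with the function field of $\Pro^2_\ka$ (or of any smooth $\ka$-rational projective surface), we view $G$ as a finite subgroup of the Cremona group $\mathrm{Bir}_\ka(\Pro^2_\ka)$ acting by $\ka$-birational selfmaps.

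The next step is to regularize this birational $G$-action on a smooth projective $\ka$-rational surface. This is standard in characteristic zero: take the Zariski closure of the graph of the diagonal action of $G$ in $\prod_{g \in G} \Pro^2_\ka$, pass to its normalization, and then apply an equivariant resolution of singularities. The outcome is a smooth projective surface $X$ over $\ka$, $\ka$-birational to $\Pro^2_\ka$, on which $G$ acts biregularly by $\ka$-automorphisms, with $\ka(X) \cong K$ as $G$-fields. In particular $X$ is $\ka$-rational and $G \subset \Aut_\ka(X)$ is finite.

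Now I apply Corollary \ref{Unquotient}: since $X$ is $\ka$-rational and $|G|$ is odd with $|G| \neq 3$, the quotient $X/G$ is $\ka$-rational. Therefore
\[
K^G \;=\; \ka(X)^G \;=\; \ka(X/G) \;\cong\; \ka(y_1, y_2),
\]
which is exactly what we wanted.

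There is no serious obstacle: the entire content of the argument has been absorbed into Corollary \ref{Unquotient} (and through it into Theorem \ref{main}), so the only remaining work is the routine regularization step. The one place to be careful is ensuring that the action can indeed be made biregular on a $\ka$-rational smooth projective model, but this is standard over a field of characteristic zero; one could alternatively invoke the existence of a $G$-equivariant smooth projective model for any finitely generated field extension of transcendence degree $2$ carrying a finite group action.
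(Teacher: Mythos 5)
Your argument is exactly the route the paper intends: the paper gives no separate proof of Corollary \ref{Unfields}, presenting it as the field-theoretic reformulation of Corollary \ref{Unquotient} via the standard regularization dictionary sketched in the Introduction (regularize the birational $G$-action on a smooth projective $\ka$-rational model $X$, identify $K^G$ with $\ka(X/G)$, and apply Corollary \ref{Unquotient}). One small caveat about the extra reduction you added: the claim that the image of $G$ in $\Aut_{\ka}(K)$ has order $\neq 3$ unless it is trivial is false --- for instance $G \cong \CG_9$ acting through a quotient $\CG_3$ has image of order exactly $3$ --- so the hypothesis must be read, as in Corollary \ref{Unquotient}, as concerning a faithful action; with that reading your reduction step is unnecessary and the rest of the argument is correct.
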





The plan of this paper is as follows. In Section $2$ we remind some notions and results about rational surfaces, $G$-equivariant minimal model program and singularities.

In Section $3$ we consider quotients of del Pezzo surfaces of degree $1$. In Proposition~\ref{DP1} we give a list of groups such that the quotient can be non-$\ka$-rational. To prove this proposition we consider groups acting on del Pezzo surfaces of degree $1$, and show that the quotient is $\ka$-rational in all cases that are not listed in Proposition \ref{DP1}. Also in Remark~\ref{DP1typeImin} we show that the quotient $X / G$ of a del Pezzo surface $X$ of degree $1$ by a group~$G$ of order $2$ having an elliptic curve of fixed points is always not $\ka$-rational if~$\rho(X)^G = 1$.

In Section $4$ we study properties of conjugacy classes of elements of order $3$ in the Weyl group $\mathrm{W}(\mathrm{E}_8)$. We introduce a notion of \textit{$\davidsstar$-configuration} of $(-1)$-curves, and show how one can prove $\ka$-rationality, non-$\ka$-rationality and $G$-minimality of del Pezzo surfaces of degree~$1$ in terms of $\davidsstar$-configurations.

In Section $5$ for the cyclic groups of order $3$ and $6$ mentioned in Proposition \ref{DP1} we use obtained results to construct explicit examples of $\ka$-rational and non-$\ka$-rational quotients of $\ka$-rational and non-$\ka$-rational del Pezzo surfaces~$X$ of degree $1$ such that $\rho(X)^G = 1$.

In Section $6$ we remind the construction of a non-$\ka$-rational quotient of a cubic surface by a group of order $3$, and prove Theorems \ref{Galunirat}, \ref{Galgeomunirat} and \ref{Unquotient}.

The author is grateful to Costya Shramov for many useful discussions and comments, to Yuri\,G.\,Prokhorov for posing the problem of the classification of Galois unirational surfaces, and to Katya Kuksa for drawing the figures.

\begin{notation}

Throughout this paper $\ka$ is any field of characteristic zero, $\kka$ is its algebraic closure, $G$ is a finite group. For a surface $X$ we denote $X \otimes \kka$ by $\XX$. For a surface $X$ we denote the Picard group (resp. the $G$-invariant Picard group) by $\Pic(X)$ (resp. $\Pic(X)^G$). The number \mbox{$\rho(X) = \operatorname{rk} \Pic(X)$} (resp. \mbox{$\rho(X)^G = \operatorname{rk} \Pic(X)^G$}) is the Picard number (resp. the $G$-invariant Picard number) of $X$. If two surfaces $X$ and $Y$ are $\ka$-birationally equivalent then we write~$X \approx Y$. If two divisors $A$ and $B$ are linearly equivalent then we write~$A \sim B$.

We use the following notation concerning groups:

\begin{itemize}

\item $\ii = \sqrt{-1}$;

\item $\xi_k = e^{\frac{2\pi \ii}{k}}$;

\item $\omega = \xi_3 = e^{\frac{2\pi \ii}{3}}$;

\item $\CG_n$ denotes a cyclic group of order $n$;

\item $\DG_{2n}$ denotes a dihedral group of order $2n$;

\item $\SG_n$ denotes a symmetric group of degree $n$;

\item $\AG_n$ denotes an alternating group of degree $n$;

\item $\langle g_1, \ldots, g_n \rangle$ denotes a group generated by $g_1$, \ldots, $g_n$;

\item $A \times B$ denotes the direct product of groups $A$ and $B$;


\item $\operatorname{diag}(a, b)$ denotes the diagonal $2 \times 2$ matrix with diagonal entries $a$ and $b$;

\item for a vector space $V$ (or a lattice $L$) with an action of a group $G$ we denote by $V^G$ (resp. $L^G$) the subspace of $G$-invariant vectors (resp. the sublattice of $G$-invariant elements).

\end{itemize}

\end{notation}

\section{Preliminaries}

In this section we review main notions and results of $G$-equivariant minimal model program following the papers \cite{Man67}, \cite{Isk79}, \cite{DI1}.

\begin{definition}
\label{rationality}
A {\it geometrically rational variety} $X$ is a variety over $\ka$ such that $\XX=X \otimes \kka$ is birationally equivalent to $\Pro^n_{\kka}$.

A {\it $\ka$-rational variety} $X$ is a variety over $\ka$ such that $X$ is birationally equivalent to $\Pro^n_{\ka}$.

A variety $X$ over $\ka$ is a {\it $\ka$-unirational variety} if there exists a $\ka$-rational variety $Y$ and a dominant rational map $\varphi: Y \dashrightarrow X$.
\end{definition}

\begin{definition}
\label{minimality}
A {\it $G$-surface} is a pair $(X, G)$ where $X$ is a projective surface over $\ka$ and~$G$ is a finite subgroup of $\Aut_{\ka}(X)$. A morphism of $G$-surfaces $f: X \rightarrow X'$ is called a~\textit{$G$-equivariant morphism} or simply \textit{$G$-morphism} if for each $g \in G$ one has $fg = gf$.

A smooth $G$-surface $X$ is called {\it $G$-minimal} if any birational $G$-equivariant morphism of smooth $X \rightarrow X'$ is an isomorphism.

Let $X$ be a smooth $G$-surface. A $G$-minimal surface $Y$ is called a {\it minimal model} of $(X, G)$ or {\it $G$-minimal model} of $X$ if there exists a birational $G$-equivariant \mbox{morphism $X \rightarrow Y$}.
\end{definition}

The following theorem is a classical result about the $G$-equivariant minimal model program.

\begin{theorem}
\label{GMMP}
Any birational $G$-equivariant morphism $f:X \rightarrow Y$ of smooth $G$-surfaces can be factorized in the following way:
$$
X= X_0 \xrightarrow{f_0} X_1 \xrightarrow{f_1} \ldots \xrightarrow{f_{n-2}} X_{n-1} \xrightarrow{f_{n-1}} X_n = Y,
$$
where each $f_i$ is a contraction of a set $\Sigma_i$ of disjoint $(-1)$-curves on~$X_i$ such that $\Sigma_i$ is defined over $\ka$ and $G$-invariant.
\end{theorem}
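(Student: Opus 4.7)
The plan is to argue by induction on $\rho(\XX) - \rho(Y \otimes \kka)$; when this difference vanishes the morphism $f$ must be an isomorphism and there is nothing to prove. For the inductive step I will exhibit a non-empty set $\Sigma_0$ of pairwise disjoint $(-1)$-curves on $\XX$, simultaneously $G$-invariant and $\Gal(\kka/\ka)$-invariant, each member of which is contracted by the base-change $\overline{f}: \XX \to Y \otimes \kka$. Given such $\Sigma_0$, its scheme-theoretic union descends to a reduced closed subscheme of $X$ that is $G$-stable and defined over $\ka$; the classical $G$-equivariant Castelnuovo contractibility theorem (cf.\ \cite{Man67,Isk79}) then produces a birational $G$-morphism $f_0: X \to X_1$ of smooth $\ka$-surfaces contracting exactly this subscheme, and applying the induction hypothesis to the induced $G$-morphism $X_1 \to Y$ closes the argument.

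To produce $\Sigma_0$, I would pick any $(-1)$-curve $E_0$ on $\XX$ contracted by $\overline{f}$. Such a curve exists because $\overline{f}$ admits a factorization over $\kka$ as a classical chain of Castelnuovo blowdowns, the first of which contracts $(-1)$-curves on $\XX$ itself. Taking $\Sigma_0$ to be the orbit of $E_0$ under the combined action of $G$ and $\Gal(\kka/\ka)$ makes both invariance requirements automatic; what remains is the disjointness of its members and the well-definedness of the $G$-equivariant contraction over $\ka$.

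The main obstacle is the disjointness, for which I would argue by contradiction. Suppose $E,E' \in \Sigma_0$ are distinct $(-1)$-curves with $E \cdot E' = k \geqslant 1$. Let $c: \XX \to \XX'$ be the Castelnuovo contraction of $E$, so that $\overline{f} = \overline{f}' \circ c$ for a birational $\overline{f}': \XX' \to Y \otimes \kka$. Since $E' \neq E$, the image $c(E')$ is an irreducible curve, and the identity $c^{\ast} c(E') = E' + kE$ together with the projection formula gives
\[
c(E')^2 \;=\; (E' + kE)^2 \;=\; -1 + 2k^2 - k^2 \;=\; k^2 - 1 \;\geqslant\; 0.
\]
However $c(E')$ is an irreducible curve contracted by a birational morphism of smooth projective surfaces, so its self-intersection must be strictly negative (Hodge index theorem), a contradiction. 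Hence $k = 0$. The remaining ingredient, namely Galois descent of a disjoint union of $(-1)$-curves together with its $G$-equivariant Castelnuovo contraction over $\ka$, is classical, so the inductive step produces the desired factorization $X = X_0 \to X_1 \to \ldots \to X_n = Y$.
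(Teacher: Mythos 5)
The paper gives no proof of this theorem at all --- it is stated as ``a classical result'' of the $G$-equivariant minimal model program and implicitly referred to \cite{Man67}, \cite{Isk79}, \cite{DI1} --- so there is no in-paper argument to compare against. Your proof is correct and is essentially the standard one from that literature: the induction on $\rho(\XX)-\rho(Y\otimes\kka)$, the choice of $\Sigma_0$ as the $G\times\Gal\left(\kka/\ka\right)$-orbit of a contracted $(-1)$-curve, and the computation $c(E')^2=(E'+kE)^2=k^2-1\geqslant 0$ (contradicting negativity of self-intersection of a contracted irreducible curve) to force pairwise disjointness are exactly the classical ingredients, with the remaining descent and contraction steps correctly attributed to Castelnuovo's criterion over $\ka$.
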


The classification of $G$-minimal rational surfaces is well-known due to V.\,Iskovskikh and Yu.\,Manin (see \cite{Isk79} and \cite{Man67}). We introduce some important notions before surveying it.

\begin{definition}
\label{Cbundledef}
A smooth rational $G$-surface $X$ admits a structure of a {\it $G$-equivariant conic bundle} if there exists a $G$-equivariant map $\varphi: X \rightarrow B$ such that any scheme fibre is isomorphic to a reduced conic in~$\Pro^2_{\ka}$ and $B$ is a smooth curve.

The curve $B$ is called the {\it base} of the conic bundle.

\end{definition}

Let $\overline{\varphi}: \XX \rightarrow \overline{B} \cong \Pro^1_{\kka}$ be a conic bundle. A general fibre of $\overline{\varphi}$ is isomorphic to $\Pro^1_{\kka}$. The fibration~$\overline{\varphi}$ has a finite number of singular fibres which are degenerate conics. Any irreducible component of a singular fibre is a $(-1)$-curve. If $n$ is the number of singular fibres of $\overline{\varphi}$ then $K_{\XX}^2 + n = 8$.

\begin{definition}
\label{DPdef}
A {\it del Pezzo surface} is a smooth projective surface~$X$ such that the anticanonical class $-K_X$ is ample.

A {\it singular del Pezzo surface} is a normal projective surface $X$ such that the anticanonical class $-K_X$ is ample and all singularities of $X$ are Du Val singularities.

A {\it weak del Pezzo surface} is a smooth projective surface $X$ such that the anticanonical class $-K_X$ is nef and big.

The number $d = K_X^2$ is called the {\it degree} of a (singular, weak) del Pezzo surface $X$.
\end{definition}

The following proposition is well known (see e.g. \cite[Subsection 8.1.3]{Dol12}). 

\begin{proposition}
\label{DPconnection}

If $\widetilde{X}$ is a weak del Pezzo surface then for any irreducible reduced curve~$C$ one has $C^2 \geqslant -2$.

If $X$ is a singular del Pezzo surface and $\widetilde{X} \rightarrow X$ is the minimal resolution of singularities then $\widetilde{X}$ is a weak del Pezzo surface, and all $(-2)$-curves on $\widetilde{X}$ lie on the preimages of the singularities of $X$.

If $\widetilde{X}$ is a weak del Pezzo surface and $\widetilde{X} \rightarrow Y$ is a birational morphism of smooth surfaces then $Y$ is a weak del Pezzo surface.

If $Y$ is a weak del Pezzo surface and there are no $(-2)$-curves on $Y$ then $Y$ is a del Pezzo surface.

\end{proposition}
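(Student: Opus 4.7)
The plan is to prove the four assertions using the adjunction formula, the crepancy of the minimal resolution of Du Val singularities, the factorization of a birational morphism of smooth surfaces as a chain of point blowups, and the Nakai--Moishezon criterion. Assertion (1) follows immediately from adjunction: for a reduced irreducible curve $C$ on a weak del Pezzo surface $\widetilde X$, one has $C^2 = 2p_a(C) - 2 - C \cdot K_{\widetilde X}$, and since $p_a(C) \geq 0$ and $-K_{\widetilde X}$ is nef, this gives $C^2 \geq -2$.

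For assertion (2), I will invoke the characterization of Du Val singularities as rational double points whose minimal resolution $\pi: \widetilde X \to X$ is crepant, so $K_{\widetilde X} = \pi^* K_X$. Nefness and bigness of $-K_{\widetilde X}$ are then inherited from ampleness of $-K_X$ by pullback, with $(-K_{\widetilde X})^2 = (-K_X)^2 > 0$. A $(-2)$-curve $C$ satisfies $C \cdot K_{\widetilde X} = 0$ by adjunction, hence $C \cdot \pi^*(-K_X) = 0$; ampleness of $-K_X$ then forces $\pi(C)$ to be a point of the singular locus of $X$, since $\pi$ is an isomorphism elsewhere.

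For assertion (3), I will factor $\pi: \widetilde X \to Y$ as a composition of $n$ point blowups, yielding $K_{\widetilde X} = \pi^* K_Y + E$ with $E$ an effective exceptional divisor. Nefness of $-K_Y$ follows by taking any irreducible curve $C \subset Y$, its strict transform $\widetilde C$, and computing $-K_Y \cdot C = (-K_{\widetilde X} + E) \cdot \widetilde C \geq 0$, since $-K_{\widetilde X}$ is nef and $\widetilde C$ is not a component of $E$. Bigness then follows from $(-K_Y)^2 = (-K_{\widetilde X})^2 + n > 0$ together with nefness.

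Assertion (4) is the main technical step. By the Nakai--Moishezon criterion, it suffices to show $(-K_Y)^2 > 0$ and $-K_Y \cdot C > 0$ for every irreducible curve $C \subset Y$. The first inequality is immediate from bigness. For the second, suppose for contradiction that $-K_Y \cdot C = 0$ for some irreducible $C$. Applying the Hodge index theorem with the class $-K_Y$, which has positive self-intersection, yields $C^2 \leq 0$, with equality only if $C$ is numerically trivial, which is impossible for an effective curve. Hence $C^2 < 0$; combined with the adjunction identity $C^2 = 2p_a(C) - 2$ and $p_a(C) \geq 0$, this forces $p_a(C) = 0$ and $C^2 = -2$, i.e.\ $C$ is a $(-2)$-curve, contradicting the hypothesis. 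The delicate point is precisely this Hodge index argument, since the other assertions are direct consequences of adjunction and standard surface birational geometry.
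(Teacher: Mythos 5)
Your proof is correct, and all four steps (adjunction plus nefness for the first claim, crepancy of the minimal resolution of Du Val singularities for the second, the blowup factorization for the third, and Nakai--Moishezon with the Hodge index theorem for the fourth) are sound. The paper offers no proof of its own, citing only \cite[Subsection 8.1.3]{Dol12}, and your argument is essentially the standard one given there, so there is nothing to compare beyond noting that you have supplied the details the paper omits.
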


We need the following results about classification of $G$-minimal geometrically rational surfaces.

\begin{theorem}[{\cite[Theorem 1]{Isk79}}]
\label{Minclass}
Let $X$ be a $G$-minimal geometrically rational surface. Then either $X$ admits a $G$-equivariant conic bundle structure with $\Pic(X)^{G} \cong \Z^2$, or $X$ is a del Pezzo surface with $\Pic(X)^{G} \cong \Z$.
\end{theorem}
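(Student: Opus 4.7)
The plan is to run the $G$-equivariant minimal model program on the smooth projective $G$-surface $X$, mirroring Mori's classification of extremal contractions from smooth surfaces but carried out in the $G$-equivariant category over $\ka$.

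First, since $X$ is geometrically rational, the canonical class $K_{\XX}$ is not pseudo-effective, and hence neither is $K_X$. By the $G$-equivariant cone theorem for surfaces, the cone $\overline{NE}(X)^G$ of $G$-invariant numerical classes of effective curves contains a $K_X$-negative extremal ray $R$. Applying the contraction theorem $G$-equivariantly produces a $G$-morphism $\varphi_R \colon X \to Y$ with connected fibres whose relative $G$-Picard rank equals one; in particular $\rho(X)^G - \rho(Y)^G = 1$.

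Next I would distinguish cases by $\dim Y$. If $\dim Y = 2$, then $\varphi_R$ is the contraction of a $G$-invariant set of disjoint $(-1)$-curves, which contradicts $G$-minimality by Theorem \ref{GMMP}. If $\dim Y = 0$, then $\rho(X)^G = 1$ and $-K_X$ is $\varphi_R$-ample, hence ample; therefore $X$ is a del Pezzo surface and $\Pic(X)^G \cong \Z$. If $\dim Y = 1$, then $\varphi_R \colon X \to B$ is a surjective morphism onto a smooth curve $B$ with $\rho(X)^G = 2$, and it remains to verify that $\varphi_R$ endows $X$ with a $G$-equivariant conic bundle structure in the sense of Definition \ref{Cbundledef}.

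The main obstacle is the analysis of the fibration case. One has to show that $B$ is of genus $0$ and that every scheme-theoretic fibre of $\varphi_R$ is a reduced conic in $\Pro^2$. The genus statement follows because geometric rationality of $X$ forces $\overline{B}$ to be a smooth rational curve (pull back a nonconstant function from $\overline{B}$ through the birational map $\XX \dashrightarrow \Pro^2_{\kka}$). For the structure of fibres, a general fibre is a smooth $\ka(B)$-rational curve of arithmetic genus $0$ and trivial self-intersection, hence isomorphic to $\Pro^1$; reducible fibres are connected unions of rational curves by Zariski's lemma, and smoothness of $X$ combined with $F \cdot E = 0$ for every component $E$ of a fibre $F$, together with the adjunction formula, force each such component to be a $(-1)$-curve meeting the remaining components transversely at a single point, which is precisely a degenerate reduced plane conic. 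This identifies $\varphi_R$ as a $G$-equivariant conic bundle and completes the classification.
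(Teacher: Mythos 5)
The paper does not actually prove this statement: it is imported verbatim from Iskovskikh (\cite[Theorem 1]{Isk79}), so there is no in-paper argument to compare against. Your proposal is a correct outline of the standard modern proof via the two-dimensional $G$-equivariant minimal model program, which is a genuinely different route from Iskovskikh's original 1979 argument (that proof predates Mori theory and proceeds through Manin-style analysis of exceptional curves and termination of adjunction). The Mori-theoretic route you take is shorter and cleaner, at the cost of invoking the equivariant cone and contraction theorems; the classical route is more elementary but considerably longer.

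Two places in your sketch deserve more care, though neither is a fatal gap. First, in the case $\dim Y = 2$ you assert that the contraction collapses a \emph{disjoint} $G$-invariant set of $(-1)$-curves; this is exactly where the trichotomy gets its content. If $E$ is a $(-1)$-curve and some translate $E'$ in its $G \times \Gal(\kka/\ka)$-orbit meets it, then the orbit sum $D$ satisfies $D^2 \geqslant 0$ with $-K_X \cdot D > 0$, so the extremal face is of fibre type ($D^2 = 0$ gives a conic bundle fibre class, $D^2 > 0$ forces $\rho(X)^G = 1$) and one is not in the birational case at all; you should say this explicitly rather than treat disjointness as automatic. Second, the theorem asserts $\Pic(X)^G \cong \Z$ (resp.\ $\Z^2$), not merely a statement about ranks, so you need torsion-freeness of $\Pic(X)$; this holds because $\Pic(X)$ injects into the free group $\Pic(\XX)$ for a geometrically rational surface. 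Finally, in the conic bundle case you should rule out multiple fibres: a fibre $F = 2E$ would give $K_X \cdot E = -1$ and $E^2 = 0$, contradicting the parity of adjunction. With these points filled in, your argument establishes the statement.
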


\begin{theorem}[{cf. \cite[Theorems 4 and 5]{Isk79}}]
\label{MinCB}
Let $X$ admit a $G$-equivariant structure of a conic bundle such that $\rho(X)^G = 2$ and $K_X^2 \neq 3$, $5$, $6$, $7$, $8$. Then $X$ is $G$-minimal.
\end{theorem}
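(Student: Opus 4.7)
The plan is to proceed by contradiction. Suppose $X$ is as in the statement but is not $G$-minimal. Then Theorem \ref{GMMP} supplies a nonempty $G$-invariant set $\Sigma$ of pairwise disjoint $(-1)$-curves defined over $\ka$ admitting a $G$-equivariant contraction; decomposing into $G$-orbits, I may assume that $\Sigma$ is a single $G$-orbit, consisting of $k$ curves $E_1, \ldots, E_k$ on $\overline{X}$. Let $F$ denote the class of a general fibre of $\varphi$ and set $m := F \cdot E_i$, which is independent of $i$ by $G$-transitivity. Then $F \cdot \Sigma = km$ and $\Sigma^2 = K_X \cdot \Sigma = -k$. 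The argument splits into two cases according to whether $m = 0$ (components of singular fibres) or $m \geqslant 1$ (multisections).

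First I would dispose of the case $m = 0$. Disjointness forces the $E_i$ to lie in $k$ distinct singular fibres, so contracting $\Sigma$ yields a smooth $G$-surface $X'$ on which $\varphi$ descends to a $G$-equivariant conic bundle $\varphi'\colon X' \to B$ with $k$ fewer singular fibres. Since contracting a single $G$-orbit drops the $G$-invariant Picard rank by exactly one, $\rho(X')^G = 1$. But $F'$ and $K_{X'}$ are both $G$-invariant with $K_{X'} \cdot F' = -2 \ne 0 = (F')^2$, and so they are $\Q$-linearly independent in $\Pic(X')^G \otimes \Q$, contradicting $\rho(X')^G = 1$. Hence $m \geqslant 1$.

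For the case $m \geqslant 1$, the classes $F$ and $\Sigma$ are $\Q$-linearly independent (since $F \cdot \Sigma = km > 0$) and therefore form a basis of $\Pic(X)^G \otimes \Q$ by the hypothesis $\rho(X)^G = 2$. Writing $-K_X = \alpha F + \beta \Sigma$ and solving the linear system determined by $-K_X \cdot F = 2$ and $-K_X \cdot \Sigma = k$ gives $\beta = 2/(km)$ and $\alpha = 1/m + 2/(km^2)$; substituting into $(-K_X)^2$ produces
\[
K_X^2 \;=\; \frac{4(km+1)}{km^2}.
\]
An elementary enumeration of pairs $(k,m) \in \Z_{>0}^2$ for which the right-hand side is a positive integer yields only $(1,1), (2,1), (4,1)$ and $(1,2)$, corresponding to $K_X^2 \in \{8, 6, 5, 3\}$, each of which is excluded by hypothesis. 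This delivers the desired contradiction.

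The main technical point is the bookkeeping in the first case: one must verify that contracting a single component from each of $k$ affected singular fibres genuinely descends to a conic bundle structure on $X'$ (the other component of each contracted fibre becoming a smooth $\Pro^1$-fibre) and that the Picard-rank drop is precisely one. The arithmetic enumeration in the second case is routine once the formula above is in hand; the inclusion of $K_X^2 = 7$ in the list of excluded degrees goes slightly beyond what this argument detects and reflects finer features of the classification in \cite[Theorems~4 and 5]{Isk79} that I would import from there rather than re-derive.
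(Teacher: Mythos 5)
Your argument is correct, and it is worth noting at the outset that the paper itself gives no proof of this statement at all: Theorem \ref{MinCB} is imported wholesale from \cite[Theorems 4 and 5]{Isk79}, so your self-contained derivation is necessarily a different (and more informative) route. The two cases $m=0$ and $m\geqslant 1$, the rank count after contracting an orbit, and the formula $K_X^2 = 4(km+1)/(km^2)$ are all right, and the enumeration does yield exactly $(k,m)\in\{(1,1),(2,1),(4,1),(1,2)\}$, i.e.\ $K_X^2\in\{8,6,5,3\}$. Two small remarks. First, the contracted set $\Sigma$ must be invariant under both $G$ and $\Gal\left(\kka/\ka\right)$ (it has to be defined over $\ka$), so ``single $G$-orbit'' should read ``single orbit under the combined action of $G$ and the Galois group''; nothing in the computation changes, since $F$, $K_X$ and the orbit sum all lie in the invariant part of the Picard group and a transitive orbit of disjoint curves still contributes exactly one to the invariant rank. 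Second, your closing hedge about $K_X^2=7$ is unnecessary: you have proved the contrapositive ``not $G$-minimal $\Rightarrow K_X^2\in\{3,5,6,8\}$'', and since $\{3,5,6,8\}\subset\{3,5,6,7,8\}$ this already implies the theorem as stated --- excluding $7$ from the hypothesis only weakens the claim, so there is nothing left to import from \cite{Isk79}. (In fact a conic bundle with $K_X^2=7$ and $\rho(X)^G=2$ cannot exist, because the two components of the unique singular fibre contract to Hirzebruch surfaces $\F_n$ and $\F_{n\pm1}$ and hence can never be interchanged, which is presumably why Iskovskikh lists $7$ among the exceptions.)
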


At the end of this section we review some results about quotient singularities and their resolutions.

All singularities appearing in this paper are toric singularities. These singularities are locally isomorphic to the quotient of $\A^2$ by the cyclic group generated by $\operatorname{diag}(\xi_m, \xi_m^r)$. Such a singularity is denoted by $\frac{1}{m}(1,r)$. If $\gcd(m,r) > 1$ then the group
$$
\CG_m \cong \langle \operatorname{diag}(\xi_m, \xi_m^r) \rangle
$$
\noindent contains a reflection (i.e. an element with a unique eigenvalue not equal to $1$) and the quotient singularity is isomorphic to a quotient singularity with smaller $m$. A singularity of type $\frac{1}{m}(1, m-1)$ is called \textit{$A_{m-1}$-singularity}. Singularities appearing in this paper have type $A_{m-1}$ or $\frac{1}{3}(1, 1)$.

\begin{remark}
\label{singularities}
Let $P$ be a singular point on a surface $S$ and $\pi: \widetilde{S} \rightarrow S$ be the minimal resolution of this point.

If $P$ is $A_n$-singularity then $K_{\widetilde{S}}^2 = K_S^2$. For a curve $C$ passing through $P$ one has
$$
\pi^{-1}_*(C)^2 - C^2 = -\frac{n}{n+1}.
$$
\noindent The preimage of $P$ on $\widetilde{S}$ is a chain of $n$ $(-2)$-curves.

If $P$ is a singularity of type $\frac{1}{3}(1, 1)$ then $K_{\widetilde{S}}^2 = K_S^2 - \frac{1}{3}$. For a curve $C$ passing through $P$ one has
$$
\pi^{-1}_*(C)^2 - C^2 = -\frac{1}{3}.
$$
\noindent The preimage of $P$ on $\widetilde{S}$ is a $(-3)$-curve.

\end{remark}

\section{Del Pezzo surfaces of degree $1$}

A del Pezzo surface $X$ of degree $1$ is isomorphic to a smooth sextic surface in a \mbox{$\ka$-form} of the weighted projective space $\Pro_{\ka}(1 : 1 : 2 : 3)$. The anticanonical linear system $|-K_X|$ has a unique base point $p$. This point is defined over $\ka$ and is $G$-fixed for any finite group~$G$ acting on $X$.

The linear system $|-2K_X|$ gives a double cover of a quadratic cone in $\Pro^3_{\ka}$ branched in a smooth sextic curve. The corresponding involution $\beta$ on $X$ is called \textit{the Bertini involution}. Obviously the involution $\beta$ commutes with any element of $\Aut_{\ka}(X)$. In particular, $\langle \beta \rangle$ is a normal subgroup in $\Aut_{\ka}(X)$.

One can choose coordinates in $\Pro_{\kka}(1 : 1 : 2 : 3)$ such that the surface $\XX$ is given by the equation
$$
L_6(x, y) + L_4(x, y)z + z^3 + t^2 = 0
$$
\noindent where $L_6$ and $L_4$ are homogeneous polynomials of degrees $6$ and $4$ respectively. The point~$p$ has coordinates $(0 : 0 : -1 : 1)$, and the involution $\beta$ switches the sign of the coordinate $t$. 

Note that, if $L_4(x, y) = 0$ then there is an element $\alpha \in \Aut_{\kka}\left( \XX \right)$ of order $3$, that acts as
$$
(x : y : z : t) \mapsto (x : y : \omega z : t).
$$

For any $G \subset \Aut_{\kka}(\XX)$ the point $p$ is $G$-fixed. Therefore the group $G$ faithfully acts on the tangent space $T_p(\XX)$ of $\XX$ at $p$. In particular, $G$ is a subgroup of $\mathrm{GL}_2(\kka)$, and any element of $G$ can be realized as
$$
(x : y : z : t) \mapsto (l(x, y) : m(x, y) : z : t),
$$
\noindent where $l(x,y)$ and $m(x,y)$ are linear homogeneous polynomials. In particular, one has
$$
\alpha: (x : y : z : t) \mapsto (\omega x : \omega y : z : t), \qquad \beta: (x : y : z : t) \mapsto (-x : -y : z : t).
$$

Let $G'$ be the image of $G$ under the natural map $\mathrm{GL}_2(\kka) \rightarrow \mathrm{PGL}_2(\kka)$, and $K \lhd G$ be the kernel of the map $G \rightarrow G'$.

Any element $g \in K$ acts on $\Pro_{\kka}(1 : 1 : 2 : 3)$ as 
$$
(x : y : z : t) \mapsto \left(\xi_k x : \xi_k y : z : t\right).
$$
\noindent Thus $\xi_k^6 = 1$. Hence $g$ lies in $\langle \alpha, \beta \rangle$. Therefore $K = G \cap \langle \alpha, \beta \rangle$.

\begin{lemma}
\label{DP1alpha}
Let a finite group $G$ act on a del Pezzo surface $X$ of degree $1$ and contain $\beta$ or $\alpha$. Then the quotient $X / G$ is birationally equivalent to a surface $Y$ such that $K_Y^2 \geqslant 5$. Moreover, if $X$ has at least two $\ka$-points then $X / G$ is $\ka$-rational.
\end{lemma}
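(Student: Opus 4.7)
The plan is to factor $X/G$ through the intermediate quotient $X/\langle g \rangle$, where $g \in \{\alpha, \beta\} \cap G$ is a distinguished element furnished by the hypothesis, then to identify the minimal resolution $Z$ of $X/\langle g \rangle$ with a split Hirzebruch surface over $\ka$ of degree $8$, and finally to apply Remark~\ref{geq5touse} to the induced action of $G/\langle g \rangle$ on $Z$.

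For $g = \beta$, the bianticanonical map identifies $X/\langle\beta\rangle$ with the quadric cone $\hat C \subset \Pro^3_\ka$ cut out by the Segre relation $a_0 a_2 = a_1^2$ arising from $\mathrm{Sym}^2 H^0(-K_X) \hookrightarrow H^0(-2K_X)$; its unique singularity, an $A_1$-point at the vertex, is the image of $p$. The base conic $\{a_0 a_2 = a_1^2\}$ in $\Pro^2_\ka$ carries the $\ka$-point $(1:0:0)$ and hence is $\Pro^1_\ka$, so the blowup $Z$ of the vertex is a $\Pro^1$-bundle over $\Pro^1_\ka$ containing a $(-2)$-section, i.e.\ $Z \cong \mathbb{F}_2$ over $\ka$. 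For $g = \alpha$, $\alpha$-invariance of the defining equation forces $L_4 = 0$, so $X$ is cut out by $L_6(x, y) + z^3 + t^2 = 0$. The $\alpha$-invariant subring is generated by $x$, $y$, $z^3$, $t$, and eliminating $z^3 = -L_6(x, y) - t^2$ via the surface equation identifies $X/\langle\alpha\rangle$ with $\Pro_\ka(1:1:3)$ in coordinates $(x:y:t)$, with a single $\frac{1}{3}(1,1)$ singularity at the image of $p$; the minimal resolution is a $\Pro^1$-bundle over $\Pro^1_\ka$ (via $(x:y:t) \mapsto (x:y)$) containing a $(-3)$-section, hence $Z \cong \mathbb{F}_3$. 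In either case $K_Z^2 = 8$, and since $G$ fixes $p$, the action of $G/\langle g \rangle$ fixes the singularity of $X/\langle g \rangle$ and lifts regularly to $Z$.

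With $Z$ smooth, geometrically rational, $K_Z^2 = 8 \geqslant 5$ and $X/G \approx Z/(G/\langle g \rangle)$, the second part of Remark~\ref{geq5touse} furnishes a surface $Y$ with $K_Y^2 \geqslant 5$ birational to $X/G$, proving the first claim. For the second claim, a $\ka$-point $q \in X(\ka) \setminus \{p\}$ descends to a smooth $\ka$-point of $X/\langle g \rangle$ and lifts to $Z(\ka) \neq \varnothing$; the first part of Remark~\ref{geq5touse} (Corollary~1.2 of~\cite{Tr18a}) then gives the $\ka$-rationality of $X/G$. The main technical point is confirming that $Z$ descends to the \emph{split} Hirzebruch surface $\mathbb{F}_n$ over $\ka$ rather than a non-split twist, which follows from the $\ka$-rationality of the anticanonical pencil $|{-K_X}|$ providing the $\Pro^1_\ka$ base of the ruling of $Z$.
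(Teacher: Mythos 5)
Your proposal is correct and follows essentially the same route as the paper: factor through the central subgroup $\langle\beta\rangle$ or $\langle\alpha\rangle$, observe that the minimal resolution of $X/\langle g\rangle$ is a Hirzebruch surface with $K^2=8$, and conclude via Remark~\ref{geq5touse}, using a $\ka$-point $q\neq p$ for the rationality statement. The only (cosmetic) difference is that for $\alpha$ you identify $X/\langle\alpha\rangle$ explicitly with $\Pro_\ka(1:1:3)$ via the invariant ring, whereas the paper reaches the same conclusion ($K^2_{X/\langle\alpha\rangle}=25/3$, one $\frac{1}{3}(1,1)$ point, resolution with $K^2=8$) by the Hurwitz formula.
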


\begin{proof}
Assume that $G$ contains $\beta$. Then the quotient $X / \langle \beta \rangle$ is a quadratic cone in $\Pro^3_{\ka}$. Let $S \rightarrow X / \langle \beta \rangle$ be the minimal resolution of singularities. Then $S$ is a Hirzebruch surface~$\F_2$, one has $K_S^2 = 8$, and the quotient $X / G \approx S / (G / \langle \beta \rangle)$ is birationally equivalent to a~surface~$Y$ such that $K_Y^2 \geqslant 5$ by Remark \ref{geq5touse}.

Now assume that $G$ contains $\alpha$. The group $N \cong \langle \alpha \rangle$ is a normal subgroup of $G$. The set of $N$-fixed points on $\XX$ consists of a fixed curve $z = 0$, that has class $-2K_X$, and the isolated fixed point $p$. Let~\mbox{$f: X \rightarrow X / N$} be the quotient morphism. By the Hurwitz formula (see \cite[Equation (1.38)]{Dol12}) one has $K_X = f^*(K_{X / N}) - 4K_X$. Therefore
$$
K_{X / N}^2 = \frac{1}{3}(5K_X)^2 = \frac{25}{3},
$$
and the surface $X / N$ has a unique singular point of type $\frac{1}{3}(1,1)$. Let $\pi: \widetilde{X / N} \rightarrow X / N$ be the minimal resolution of singularities. From Remark \ref{singularities} one can see that
$$
K_{\widetilde{X / N}}^2 = K_{X / N}^2 - \frac{1}{3} = 8.
$$ 
Therefore the quotient $X / G \approx \widetilde{X / N} / (G / N)$ is birationally equivalent to a surface $Y$ such that $K_Y^2 \geqslant 5$ by Remark \ref{geq5touse}.

If $X$ has at least two $\ka$-points then one of these points $q$ differs from $p$. Then the image of $q$ is a smooth $\ka$-point on $X / \langle \beta \rangle$ or $X / \langle \alpha \rangle$. Thus these quotients are $\ka$-rational by Theorem \ref{ratcrit}. Therefore the set of $\ka$-points on $X / G$ is dense, and $X / G \approx Y$ is $\ka$-rational by Theorem \ref{ratcrit}.
\end{proof}

\begin{corollary}
\label{DP1bound}
Assume that a finite group $G$ acts on a del Pezzo surface $X$ of degree~$1$, and the quotient $X / G$ is not birationally equivalent to a surface $Y$ such that $K_Y^2 \geqslant 5$. Then $G$ is either cyclic or dihedral of order $4k + 2$.
\end{corollary}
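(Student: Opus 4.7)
My plan is to combine Lemma \ref{DP1alpha} with the faithful linear action of $G$ on the tangent space $T_p \XX$, which gives an embedding $G \hookrightarrow \mathrm{GL}_2(\kka)$, and then restrict the isomorphism type of $G$ using the classification of finite subgroups of $\mathrm{PGL}_2(\kka)$.

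Under the hypothesis, Lemma \ref{DP1alpha} prevents both $\alpha$ and $\beta$ from lying in $G$. Since $\langle \alpha, \beta \rangle \cong \CG_6$ and each of its three nontrivial subgroups --- $\langle \beta \rangle$, $\langle \alpha \rangle$, and $\langle \alpha, \beta \rangle$ itself --- contains $\alpha$ or $\beta$, the kernel $K = G \cap \langle \alpha, \beta \rangle$ of the projection $G \to G' \subset \mathrm{PGL}_2(\kka)$ must be trivial. Therefore $G \cong G'$ is a finite subgroup of $\mathrm{PGL}_2(\kka)$, and by the classical list $G$ is isomorphic to $\CG_n$, $\DG_{2n}$, $\AG_4$, $\SG_4$, or $\AG_5$.

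Next I would exploit that the embedding $G \hookrightarrow \mathrm{GL}_2(\kka)$ is a faithful two-dimensional representation (over an algebraically closed field of characteristic zero, so standard character theory applies). Inspection of the character tables shows that every faithful representation of $\AG_4$ (irreducible degrees $1,1,1,3$), of $\SG_4$ (degrees $1,1,2,3,3$, with the two-dimensional piece having the Klein four-group as kernel) and of $\AG_5$ (degrees $1,3,3,4,5$) has dimension at least three, so these three possibilities are excluded. For $\DG_{2n}$, I would pick a generator $r$ of the rotation subgroup and diagonalize its image in $\mathrm{GL}_2(\kka)$ as $\operatorname{diag}(\zeta, \zeta')$; the relation $s r s^{-1} = r^{-1}$ for any reflection $s \in G$ forces $\{\zeta, \zeta'\} = \{\zeta^{-1}, \zeta'^{-1}\}$, and faithfulness (combined with a short direct check in the case $n = 2$, where $\DG_4 \cong \CG_2 \times \CG_2$) yields $\zeta' = \zeta^{-1}$ with $\zeta$ a primitive $n$-th root of unity. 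If $n = 2k$ is even then $r^k = \operatorname{diag}(-1, -1) = -I$, but the explicit formula $\beta \colon (x,y,z,t) \mapsto (-x,-y,z,t)$ identifies $-I \in \mathrm{GL}_2(\kka)$ with $\beta$, so $\beta \in G$, contradicting the first step. Hence $n$ must be odd and $|G| = 2n = 4k+2$.

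The main piece of genuine work is the dihedral analysis: one must pin down the scalar $-I$ that appears in any faithful two-dimensional representation of $\DG_{4k}$ and recognize it as the Bertini involution $\beta$ inside $G$. All of the other steps are a direct application of Lemma \ref{DP1alpha} together with standard facts about finite subgroups of $\mathrm{PGL}_2$ and their linear representations.
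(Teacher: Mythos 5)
Your proposal is correct and follows essentially the same route as the paper: Lemma \ref{DP1alpha} rules out $\alpha$ and $\beta$, hence the kernel $K = G \cap \langle \alpha, \beta \rangle$ is trivial and $G$ embeds into both $\mathrm{GL}_2(\kka)$ and $\mathrm{PGL}_2(\kka)$, after which Blichfeldt's classification and the non-existence of isomorphic lifts of $\DG_{4k}$, $\AG_4$, $\SG_4$, $\AG_5$ finish the argument. The only difference is that you spell out (via character theory and the eigenvalue computation identifying $-I$ with $\beta$) the lifting claim that the paper simply cites as well known.
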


\begin{proof}
By Lemma \ref{DP1alpha} the group $G$ does not contain elements $\beta$ and $\alpha$. Therefore $G$ is a~subgroup of $\mathrm{GL}_2(\kka)$ and $\mathrm{PGL}_2(\kka)$, since it faithfully acts on the tangent space $T_p(\XX)$ and its projectivization. It is well known (see \cite[Chapter III]{Bl17}) that any finite subgroup of~$\mathrm{PGL}_2(\kka)$ is isomorphic to $\CG_n$, $\DG_{2n}$, $\AG_4$, $\SG_4$ or~$\AG_5$. Moreover, groups $\DG_{4k}$, $\AG_4$, $\SG_4$ and~$\AG_5$ do not have isomorphic lift to $\mathrm{GL}_2(\kka)$. Thus $G$ is either cyclic or dihedral of order~$4k + 2$.
\end{proof}

From Corollary \ref{DP1bound} and the results of \cite[Subsection 8.8.4]{Dol12} we can obtain the following theorem.

\begin{theorem}[{cf. \cite[Subsection 8.8.4]{Dol12}}]
\label{DP1groupclass}
Let $\XX$ be a del Pezzo surface of degree~$1$, and $G$ be a subgroup of $\Aut_{\kka}(\XX)$ not containing the elements $\beta$ and $\alpha$. Then for each possibility of $G$ one can choose coordinates in $\Pro_{\kka}(1 : 1 : 2 : 3)$ such that the equation of $\XX$ and the group $G$ are presented in Table \ref{table1}.

\vbox{
\begin{center}
\begin{longtable}{|c|c|c|c|}
\caption[]{\label{table1} Automorphism groups}\endhead
\hline
Type & Group & Equation & Action \\
\hline
$0$ & $\langle \mathrm{id} \rangle$ & $L_6(x, y) + L_4(x, y)z + z^3 + t^2 = 0$ & trivial \\
\hline
$\mathrm{I}$ & $\CG_2$ & $L_3\left(x^2, y^2\right) + L_2\left(x^2, y^2\right)z + z^3 + t^2 = 0$ & $(-x : y : z : t)$ \\
\hline
$\mathrm{II}$ & $\CG_3$ & $x^6 + Ax^3y^3 + y^6 + \left(Bx^3 + Cy^3\right)yz + z^3 + t^2 = 0$ & $(\omega x : y : z : t)$ \\
\hline
$\mathrm{III}$ & $\CG_3$ & $x^6 + Ax^3y^3 + y^6 + Bx^2y^2z + z^3 + t^2 = 0$ & $(\omega x : \omega^2 y : z : t)$ \\
\hline
$\mathrm{IV}$ & $\CG_4$ & $\left(Ax^4 + By^4\right)y^2 + \left(Cx^4 + Dy^4\right)z + z^3 + t^2 = 0$ & $(\ii x : y : z : t)$ \\
\hline
$\mathrm{V}$ & $\CG_4$ & $\left(Ax^4 + By^4\right)y^2 + \left(Cx^4 + Dy^4\right)z + z^3 + t^2 = 0$ & $(\ii x : -y : z : t)$ \\
\hline
$\mathrm{VI}$ & $\CG_5$ & $\left(x^5 + y^5\right)y + Ay^4z + z^3 + t^2 = 0$ & $(\xi_5 x : y : z : t)$ \\
\hline
$\mathrm{VII}$ & $\CG_6$ & $x^6 + y^6 + Ay^4z + z^3 + t^2 = 0$ & $(\xi_6 x : y : z : t)$ \\
\hline
$\mathrm{VIII}$ & $\CG_6$ & $x^6 + y^6 + Ay^4z + z^3 + t^2 = 0$ & $(\omega x : -y : z : t)$ \\
\hline
$\mathrm{IX}$ & $\CG_6$ & $x^6 + y^6 + Bx^2y^2z + z^3 + t^2 = 0$ & $(\xi_6 x : \omega y : z : t)$ \\
\hline
$\mathrm{X}$ & $\CG_{12}$ & $x^6 + y^4z + z^3 + t^2 = 0$ & $(\omega x : \ii y : z : t)$ \\
\hline
$\mathrm{XI}$ & $\CG_{12}$ & $x^6 + y^4z + z^3 + t^2 = 0$ & $(\xi_6 x : \ii y : z : t)$ \\
\hline
$\mathrm{XII}$ & $\DG_6$ & $x^6 + Ax^3y^3 + y^6 + Bx^2y^2z + z^3 + t^2 = 0$ & $(\omega x : \omega^2 y : z : t)$, $(y : x : z : t)$ \\
\hline

\end{longtable}
\end{center}}

\noindent where $L_k$ is a homogeneous polynomial of degree $k$, and $A$, $B$, $C$ and $D$ are elements of~$\kka$.

\end{theorem}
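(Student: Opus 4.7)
The plan is to combine Corollary \ref{DP1bound}, which restricts $G$ to be cyclic $\CG_n$ or dihedral $\DG_{4k+2}$, with a direct representation-theoretic analysis of the action on $\Pro_{\kka}(1:1:2:3)$. As established in the discussion before the statement, every $g \in G$ fixes the base point $p$, acts on the tangent space $T_p(\XX)$ via an element of $\mathrm{GL}_2(\kka)$ that uniquely determines $g$, and acts trivially on $z$ and $t$. Hence invariance of the defining equation forces $g^{*}L_6(x,y) = L_6(x,y)$ and $g^{*}L_4(x,y) = L_4(x,y)$ separately.

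For the cyclic case $G = \langle g \rangle \cong \CG_n$ I would diagonalize $g$ over $\kka$ and write $g = \operatorname{diag}(\xi_n^a, \xi_n^b)$ in suitable coordinates, the pair $(a,b)$ being determined up to permutation and common rescaling. A monomial $x^p y^q$ is then $g$-invariant precisely when $pa + qb \equiv 0 \pmod{n}$. The assumption $g \notin \langle \alpha, \beta \rangle$ rules out the scalar actions corresponding to $\alpha$ and $\beta$. On the other hand, the smoothness of $\XX$ requires the invariant polynomial $L_6(x,y)$ to have no multiple linear factors, which in particular forces several distinct $g$-invariant monomials to appear. These two constraints together cut the admissible triples $(n,a,b)$ down to a short list, and after rescaling $x$ and $y$ to normalize the surviving coefficients one obtains exactly the rows $\mathrm{I}$--$\mathrm{XI}$ of Table \ref{table1}.

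For the dihedral case $G = \DG_{4k+2} = \langle g, \tau \rangle$, the index-two cyclic subgroup $\langle g \rangle$ is first analyzed as above. The involution $\tau$ normalizes $\langle g \rangle$ by inversion, hence swaps the two eigenlines of $g$; after rescaling we may take $\tau : (x,y) \mapsto (y,x)$. Imposing $\tau$-invariance on top of the cyclic constraints, i.e.\ symmetry of $L_6$ and $L_4$ under $x \leftrightarrow y$, eliminates every cyclic type other than type $\mathrm{III}$ and yields the normal form of row $\mathrm{XII}$ with $G \cong \DG_6$.

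The main obstacle is the combinatorial case-check: one must verify that smoothness of $\XX$ off $p$ (equivalently, that the sextic branch curve of the double cover $\XX \to \mathrm{cone}$ is smooth) together with the nondegeneracy of $L_6$ eliminates the apparent high-order cyclic families, and that the ambient coordinate rescalings of $x,y,z,t$ used to normalize the surviving coefficients commute with the chosen normal form of $g$. This bookkeeping, which we do not redo in detail, is essentially the content of \cite[Subsection 8.8.4]{Dol12}, to which we appeal for the final tabulation.
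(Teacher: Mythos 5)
Your overall strategy matches the paper's: diagonalize the generator of the cyclic part, use separate invariance of $L_6$ and $L_4$, exclude $\alpha$ and $\beta$, invoke smoothness to kill degenerate cases, and settle the dihedral case by noting that the involution inverts the odd-order generator, forcing mutually inverse eigenvalues and hence type $\mathrm{III}$ for the cyclic subgroup. However, one step is concretely wrong. You assert that smoothness of $\XX$ forces $L_6$ to have no multiple linear factors. This is false, and it contradicts the table you are proving: in types $\mathrm{IV}$ and $\mathrm{V}$ one has $L_6 = \left(Ax^4 + By^4\right)y^2$, divisible by $y^2$, and in types $\mathrm{X}$ and $\mathrm{XI}$ one has $L_6 = x^6$. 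The surface $L_6 + L_4 z + z^3 + t^2 = 0$ is singular at a point $(x_0 : y_0 : 0 : 0)$ only when $(x_0 : y_0)$ is simultaneously a multiple root of $L_6$ and a root of $L_4$: all partials must vanish, and $\partial/\partial z = L_4 + 3z^2$ reduces to $L_4$ at $z = 0$, so a multiple root of $L_6$ alone is harmless, as $x^6 + y^4z + z^3 + t^2 = 0$ shows. The paper uses exactly this joint criterion ($x=0$ or $y=0$ being a double root of $L_6$ that is a common root with $L_4$) to discard the bad cases with $\ord g > 6$ and $\ord g = 5$ while retaining types $\mathrm{IV}$, $\mathrm{V}$, $\mathrm{X}$, $\mathrm{XI}$. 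Applied literally, your criterion would delete four rows of the table.

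A smaller imprecision: the hypothesis is that $G$ contains neither $\alpha$ nor $\beta$, and this must be tested on all powers of $g$, not only on $g$ itself. Saying the assumption ``rules out the scalar actions'' misses, for instance, $g = \operatorname{diag}(\ii, -\ii)$, which is not scalar but satisfies $g^2 = \beta$, and $g = \operatorname{diag}(\xi_6, \omega^2)$ with $g^2 = \alpha$; several cases in the paper's enumeration are eliminated precisely this way. Both issues sit inside the ``bookkeeping'' you delegate to \cite[Subsection 8.8.4]{Dol12}, but since the paper carries out that case-check itself (the theorem is only a ``cf.''\ of Dolgachev, the hypotheses differing by the exclusion of $\alpha$ and $\beta$), you need to state the correct smoothness criterion and the correct form of the exclusion before the combinatorial enumeration can go through.
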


For shortness, we will say that a group $G$ has type $\mathrm{I}$, type $\mathrm{II}$, etc. of Theorem \ref{DP1groupclass}, if this group acts on a surface $\XX$ of the corresponding type.

For completeness we give a sketch of proof of Theorem \ref{DP1groupclass}. 

\begin{proof}[Proof of Theorem \ref{DP1groupclass}]

We start from the case of cyclic groups. Let $G$ be a cyclic group generated by an element $g$. The action of $g$ can be diagonalized in the following way:
$$
(x: y: z: t) \mapsto (\lambda x : \mu y : z : t).
$$

The group $G$ does not contain $\alpha$ and $\beta$, therefore $G$ faithfully acts on $\Pro^1_{\kka}$ with coordinates $x$ and $y$. Moreover, the group $G$ should act on the roots of the polynomials $L_6(x,y)$ and $L_4(x,y)$. The only two $G$-fixed points on $\Pro^1_{\kka}$ are $(1 : 0)$ and $(0 : 1)$, and the other points are contained in orbits of length $\ord g$. Thus if $\ord g > 6$ then $L_6(x, y)$ is a monomial, and $L_4(x, y)$ is a monomial or is equal to $0$.

Assume that $\ord g > 6$. If $L_4(x, y)$ is equal to zero or proportional to $x^3y$, $x^2y^2$ or~$xy^3$ then $x = 0$ or $y = 0$ is a double root of $L_6(x,y)$ that is a common root with~$L_4(x,y)$. In these cases $X$ is singular. Therefore without loss of generality we may assume that~$L_4(x, y) = y^4$, and $L_6(x, y) = x^6$ or $L_6(x, y) = x^5y$.

In the case $L_6(x, y) = x^6$ one has $\lambda^6 = \mu^4 = 1$. If $\mu^2 = 1$ then $\ord g \leqslant 6$, and otherwise we can put $\mu = \ii$. One has $\ord g > 6$ if $\lambda$ is $\xi_6$, $\omega$, $\omega^2$ or $\xi_6^5$. For each of those possibilities the element $g$ is a generator of a group of type $\mathrm{X}$ or $\mathrm{XI}$.

In the case $L_5(x, y) = x^5y$ one can consider a subgroup of order $2$ in $G$, and see that it is generated by $\beta$. Therefore we have a contradiction.

Now assume that $\ord g \leqslant 6$. If $\ord g = 2$ then without loss of generality $\lambda = -1$. If~$\mu = 1$ then $G$ has type $\mathrm{I}$, and if $\mu = -1$ then $g = \beta$ and we have a contradiction.

If $\ord g = 3$ then without loss of generality $\lambda = \omega$. If~$\mu = 1$ then $G$ has type $\mathrm{II}$, if $\mu = \omega$ then $g = \alpha$ and we have a contradiction, and if $\mu = \omega^2$ then $G$ has type $\mathrm{III}$.

If $\ord g = 4$ then without loss of generality $\lambda = \ii$. If~$\mu = 1$ then $G$ has type $\mathrm{IV}$, if~$\mu = \pm \ii$ then $g^2 = \beta$ and we have a contradiction, and if $\mu = -1$ then $G$ has type $\mathrm{V}$.

If $\ord g = 5$ then without loss of generality $\lambda = \xi_5$. If~$\mu = 1$ then $G$ has type $\mathrm{VI}$, and if $\mu = \xi_5$ then $L_6(x,y)$ is not $G$-invariant. If $\mu = \xi_5^2$ then $L_6(x, y) = Ax^2y^4$, \mbox{$L_4(x,y) = Bx^3y$} and $x = 0$ is a common double root of $L_4(x,y)$ and $L_6(x,y)$. In this case $X$ is singular. The case $\mu = \xi_5^3$ is analogous, since $g^3$ has eigenvalues $\xi_5$ and $\xi_5^2$. If $\mu = \xi_5^4$ then \mbox{$L_6(x, y) = Ax^3y^3$}, $L_4(x,y) = Bx^2y^2$, $x = 0$ is a common double root of $L_4(x,y)$ and~$L_6(x,y)$, and $X$ is singular.

If $\ord g = 6$ then either $G$ has type $\mathrm{VIII}$, or without loss of generality $\lambda = \xi_6$. If~$\mu = 1$ then $G$ has type $\mathrm{VII}$, if~$\mu$ equals $\xi_6$, $-1$ or $\xi_6^5$ then $g^3 = \beta$ and we have a contradiction, if $\mu = \omega$ then $G$ has type $\mathrm{IX}$, and if~$\mu=\omega^2$ then $g^2 = \alpha$ and we have a contradiction.

If $G$ is noncyclic then by Corollary \ref{DP1bound} the group $G$ is dihedral of order $4k + 2$. Such a group contains a normal cyclic subgroup $N$ of order $2k + 1$ generated by an element $g$. Moreover, the eigenvalues of $g$, considered as an element in $\mathrm{GL}_2 \left( \kka \right)$, should be inverse. Therefore the only possibility for $N$ is type $\mathrm{III}$, and $G$ has type $\mathrm{XII}$.

\end{proof}

In this section we prove the following proposition.

\begin{proposition}
\label{DP1}
Let $X$ be a del Pezzo surface of degree $1$, and $G$ be a finite subgroup of~$\Aut_{\ka}(X)$ such that there are at least two $\ka$-points on $X$. Then $X / G$ can be \mbox{non-$\ka$-rational} only if the group $G$ has type $0$, $\mathrm{I}$, $\mathrm{II}$, $\mathrm{III}$, $\mathrm{IX}$ of Theorem \ref{DP1groupclass}. For any other group~$G$ the quotient $X / G$ is $\ka$-rational.
\end{proposition}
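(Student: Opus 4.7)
My plan is to carry out a case analysis driven by Theorem \ref{DP1groupclass}: the types $0$, $\mathrm{I}$, $\mathrm{II}$, $\mathrm{III}$, $\mathrm{IX}$ are precisely the claimed exceptional list, so it suffices to prove $\ka$-rationality of $X/G$ for each of the remaining eight types $\mathrm{IV}$, $\mathrm{V}$, $\mathrm{VI}$, $\mathrm{VII}$, $\mathrm{VIII}$, $\mathrm{X}$, $\mathrm{XI}$, $\mathrm{XII}$. A direct inspection of Table \ref{table1} shows that in each of these eight types $G \cap \langle\alpha,\beta\rangle = \{\mathrm{id}\}$, so Lemma \ref{DP1alpha} does not apply directly and a genuine case-by-case analysis is needed.

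For each of the eight types I would use the explicit generator $g$ from Table \ref{table1} to enumerate the fixed loci of every nontrivial cyclic subgroup of $G$. A point $(x_0:y_0:z_0:t_0)$ is fixed by $g^k$ iff there exists a root of unity $\lambda$ such that $g^k(x_0,y_0,z_0,t_0) = (\lambda x_0, \lambda y_0, \lambda^2 z_0, \lambda^3 t_0)$; running over the possible $\lambda$ separates the fixed locus of $g^k$ into a (possibly empty) curve of fixed points together with a handful of isolated fixed points. The pointwise-fixed curves that occur in this way are always members of $|-K_X|$ (most often $\{x=0\}$, $\{y=0\}$ or $\{x=y\}$) and are generically elliptic; at each isolated fixed point, the character of the stabilizer on the tangent plane, computed via the same weighted-rescaling argument, identifies by Remark \ref{singularities} a cyclic quotient singularity of type $A_n$ (and, in one or two types, of type $\tfrac{1}{3}(1,1)$) on $X/G$.

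Given this stratification, the Hurwitz formula $K_X = f^*K_{X/G} + \sum_Z(e_Z - 1)Z$, summed over the codimension-one ramification components, together with the intersection numbers $K_X^2 = 1$, $L^2 = 1$ and $K_X \cdot L = -1$ for every $L \in |-K_X|$, produces $K_{X/G}^2$ in each of the eight types. I would then pass to the minimal resolution $\widetilde{X/G}$ (which is crepant for each $A_n$-singularity, and which introduces the correction $-\tfrac13$ for each $\tfrac13(1,1)$-singularity by Remark \ref{singularities}) and contract $(-1)$-curves defined over $\ka$ to reach a smooth minimal model $Y$. Once $K_Y^2 \geqslant 5$ is verified in each type and the second $\ka$-point on $X$ is seen to descend, as in the proof of Lemma \ref{DP1alpha}, to a smooth $\ka$-point of $Y$, Theorem \ref{ratcrit} concludes that $Y$, and hence $X/G$, is $\ka$-rational.

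The principal obstacle is the bookkeeping in the singularity analysis, especially for the order-$12$ cyclic types $\mathrm{X}$ and $\mathrm{XI}$, where elements of several orders coexist and yield a layered stabilizer structure with singularities of different types at different isolated fixed points. A secondary concern is that in the borderline types where the naive Hurwitz bound for $K_{\widetilde{X/G}}^2$ falls below $5$, one must supply a sufficient chain of $(-1)$-curves---coming from the exceptional divisors over the singularities and from the proper transforms of invariant hyperplane sections---whose successive contraction over $\ka$ raises $K^2$ above the rationality threshold of Theorem \ref{ratcrit}.
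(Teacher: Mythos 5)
Your outline has two gaps, and the second is where the real content of the proposition lives. First, Theorem \ref{DP1groupclass} only classifies the subgroups containing neither $\alpha$ nor $\beta$, so a case analysis driven by Table \ref{table1} does not exhaust all finite $G \subset \Aut_{\ka}(X)$: you must first dispose of the groups meeting $\langle \alpha, \beta\rangle$ nontrivially, which is exactly what Lemma \ref{DP1alpha} does and what the paper's proof of Proposition \ref{DP1} opens with. You cite that lemma only to say it ``does not apply'' to the eight remaining types, but you never invoke it for the complementary case. This is easily repaired, but it has to be said.

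Second, and more seriously: for most of the eight types the Hurwitz computation you propose lands far below the threshold of Theorem \ref{ratcrit}, and the entire burden of the proof then falls on producing explicit $(-1)$-curves defined over $\ka$ whose successive contraction raises $K^2$ to at least $5$. For type $\mathrm{XII}$ the three reflection curves give $K_{X/G}^2 = 8/3$ and the two $\frac{1}{3}(1,1)$-points bring the minimal resolution down to $K^2 = 2$; for type $\mathrm{V}$ the only ramification curve is fixed by $g^2$ alone, so $K_{X/G}^2 = 1$, and the $\frac{1}{4}(1,1)$-point at $(1:0:0:0)$ pushes the resolution to $K^2 \leqslant 0$; type $\mathrm{VIII}$ also resolves to $K^2 = 2$. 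You dismiss this as a ``secondary concern,'' but it is the principal one: nothing in your outline guarantees that the several required $(-1)$-curves exist, are individually defined over $\ka$, and can be contracted in sequence --- the Galois action on the natural candidates (components of reducible anticanonical members, exceptional curves over conjugate singular points) is precisely what can obstruct this, and it is not controlled by the hypothesis of two $\ka$-points. The paper avoids the problem structurally: each of the types $\mathrm{IV}$, $\mathrm{V}$, $\mathrm{VII}$, $\mathrm{VIII}$, $\mathrm{X}$, $\mathrm{XI}$ contains a normal $\CG_2$ of type $\mathrm{I}$, the quotient by which is birational to a cubic surface with dense $\ka$-points (Lemma \ref{DP1typeI}), so the residual quotient is settled by the classification of quotients of cubic surfaces in \cite{Tr16b} (Lemma \ref{DP1typeInorm}); type $\mathrm{XII}$ is handled through its normal type-$\mathrm{III}$ subgroup and the induced conic bundle structure (Lemmas \ref{DP1typeIII} and \ref{DP1typeXII}); only type $\mathrm{VI}$, where Hurwitz already gives $K^2 = 5$, is done by your direct method (Lemma \ref{DP1typeVI}). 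Relatedly, your claim that the second $\ka$-point descends to a smooth $\ka$-point ``as in Lemma \ref{DP1alpha}'' fails in general: unlike the quotients by $\alpha$ or $\beta$, whose only singular point is the image of the anticanonical base point, here the extra $\ka$-point may itself be an isolated fixed point of some element of $G$; the paper instead obtains $\ka$-points from the images of the contracted $(-1)$-curves and then spreads them out by unirationality.
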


We start from groups containing a normal subgroup that has a curve consisting of fixed points.

\begin{lemma}
\label{DP1typeI}
Let a finite group $G$ act on a del Pezzo surface $X$ of degree $1$, and $N \cong \CG_2$ be a normal group of type $\mathrm{I}$ in $G$. Then the quotient $X / N$ is $G / N$-birationally equivalent to a del Pezzo surface of degree $3$. Moreover, the set of $\ka$-points on $X / N$ is dense.
\end{lemma}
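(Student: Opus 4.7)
The plan is to compute $X/N$ explicitly via its ring of invariants, recognize it as a double cover of $\Pro^2_{\ka}$ branched along a plane cubic, and then invoke the classical birational model of such a cover as a cubic surface in $\Pro^3_{\ka}$.

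By Theorem~\ref{DP1groupclass}, after a change of coordinates $X\subset\Pro_{\ka}(1:1:2:3)$ has equation $L_3(x^2,y^2)+L_2(x^2,y^2)z+z^3+t^2=0$ with $N=\langle n\rangle$, $n\colon(x{:}y{:}z{:}t)\mapsto(-x{:}y{:}z{:}t)$. First I would identify the fixed locus: it consists of the elliptic curve $C=\{x=0\}\cap X$ (of class $-K_X$) together with three isolated points $(1{:}0{:}z_i{:}0)$ where $z_i$ are the roots of $z^3+L_2(1,0)z+L_3(1,0)$. At each isolated fixed point $n$ acts as $-\mathrm{id}$ on the tangent space, so $X/N$ has exactly three $A_1$-singularities and is smooth elsewhere.

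The $N$-invariants on the ambient weighted projective space are generated by $u=x^2$, $y$, $z$, $t$ of weights $2,1,2,3$, hence
$$X/N\ =\ \bigl\{L_3(u,y^2)+L_2(u,y^2)z+z^3+t^2=0\bigr\}\ \subset\ \Pro_{\ka}(2{:}1{:}2{:}3),$$
and projecting out $t$ exhibits $X/N$ as a double cover of $\Pro_{\ka}(2{:}1{:}2)$. Using the Veronese identification $\Pro_{\ka}(2{:}1{:}2)\cong\Pro^2_{\ka}$, $(u{:}y{:}z)\mapsto(u{:}y^2{:}z)$, the branch locus becomes the plane cubic $F_3(u,v,z):=L_3(u,v)+L_2(u,v)z+z^3$. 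Dehomogenizing in the chart $u=1$ gives the affine equation $t^2=F_3(1,v,z)$, whose natural projective closure in $\Pro^3_{\ka}$ with coordinates $(W{:}Y{:}Z{:}T)$ is the cubic surface
$$S\colon\ T^2W - F_3(W,Y,Z)=0,$$
so $X/N$ is birational to $S$. A direct Jacobian check shows $S$ is smooth for generic coefficients, and at worst has Du Val singularities otherwise; in either case $S$ is (birational to) a del Pezzo surface of degree $3$.

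Since $N\lhd G$ with $|N|=2$, every element of $G$ centralizes $n$, which forces the $G$-action on $\langle x,y\rangle$ to be diagonal; this diagonal action descends to a linear $G/N$-action on $(W{:}Y{:}Z{:}T)$ preserving $S$, so $X/N\approx S$ $G/N$-equivariantly. For the density of $\ka$-points, the base point $p=(0{:}0{:}{-}1{:}1)$ of $|{-K_X}|$ is a $\ka$-point of $X$ lying on $C$ and distinct from the three isolated fixed points, so its image on $X/N$ is a smooth $\ka$-point; transported to $S$ it gives a smooth $\ka$-point on the cubic, which is then $\ka$-unirational by \cite[Theorem~IV.8.1]{Man74}, so $\ka$-points are dense on $S$ and hence on $X/N$. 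The main technical step is verifying that the birational model $S$ inherits a $G/N$-action compatible with the one on $X/N$, and treating the singular $S$ uniformly with the smooth case.
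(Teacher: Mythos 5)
Your route---computing $X/N$ explicitly as a hypersurface in $\Pro_{\ka}(2:1:2:3)$ and passing to an affine chart to exhibit a cubic model---is genuinely different from the paper's, which argues on the minimal resolution: the image of the invariant curve $\{y=0\}$ is a $(-1)$-curve through the three $A_1$-points, and its contraction is a smooth del Pezzo surface of degree $3$ by the weak del Pezzo formalism. However, your execution has a genuine error at the key step. The map $X/N\to\Pro^2$, $(u:y:z:t)\mapsto(u:y^2:z)$, is branched not only along the cubic $F_3=0$ but also along the line $v=0$: over a point with $y=0$ the two values $\pm t$ are identified by the weight-$(2,1,2,3)$ rescaling by $-1$, so the fibre there is a single point. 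This is forced by $K_{X/N}^2=2$: the quotient is a double cover of $\Pro^2$ branched in the nodal \emph{quartic} $v\cdot F_3=0$, not in a cubic. Accordingly, the chart $u=1$ of $X/N$ is $\tau^2=-v\,F_3(1,v,z)$ with $\tau=ty/u^2$; the locus $\{t^2=F_3(1,v,z)\}\subset\A^3$ is not a chart of $X/N$, because $t$ is not invariant under the residual $\mu_2$-action $(y,z,t)\mapsto(-y,z,-t)$ on $\{u=1\}$. The correct cubic model comes from the chart $y=1$ (where the extra branch line is at infinity) and is $T^2V+F_3(U,V,Z)=0$; your surface $T^2W-F_3(W,Y,Z)=0$ is the double cover branched along $F_3=0$ together with the \emph{other} line $u=0$, and since the two slots of $F_3$ are not symmetric for general $L_2,L_3$, it is not $\ka$-birational to $X/N$ in general. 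So the surface you construct is, in general, the wrong one.

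Even after correcting the chart, two further steps need care. First, ``smooth for generic coefficients, at worst Du Val otherwise; in either case birational to a del Pezzo surface of degree $3$'' is not a proof for \emph{every} smooth $X$: a cubic surface with Du Val singularities is only a singular del Pezzo surface, and its minimal resolution is a weak del Pezzo surface which need not be $\ka$-birational to a smooth cubic (contracting its $(-1)$-curves may land elsewhere). You must show the degree-$3$ model is actually smooth; the paper gets this because $X/N$ has exactly three $A_1$-points, so after contracting the single $(-1)$-curve $\pi^{-1}_*f(C)$ the three $(-2)$-curves become $(-1)$-curves and no curve of self-intersection $\leqslant -2$ survives (Proposition \ref{DPconnection}). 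Second, your $\ka$-point $p=(0:0:-1:1)$ lies on the contracted curve $\{y=0\}$, hence in the indeterminacy locus of the birational map to the cubic; it does land at the Eckardt point $(0:0:0:1)$ --- the same $\ka$-point the paper uses, namely the image of the contracted $(-1)$-curve --- but this needs to be said rather than the point being silently ``transported.''
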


\begin{proof}

By Theorem \ref{DP1groupclass} one can choose coordinates in $\Pro_{\kka}(1 : 1 : 2 : 3)$ such that the set of $N$-fixed points on $\XX$ consists of a fixed curve $x = 0$, that has class $-K_X$, and three isolated fixed $p_1$, $p_2$ and $p_3$, lying on the line $y = t = 0$. The curve $C$ given by $y = 0$ is~$N$-invariant and passes through the points $p_i$.

Let $f: X \rightarrow X / N$ be the quotient morphism, and
$$
\pi: \widetilde{X / N} \rightarrow X / N
$$
\noindent be the minimal resolution of singularities. The points $f(p_i)$ are $A_1$-singularities on $X / N$, and the curve $f(C)$ passes through these three points. Therefore
$$
\pi^{-1}_*f(C)^2 = f(C)^2 - 3 \cdot \frac{1}{2} = \frac{1}{2} C^2 - \frac{3}{2} = -1.
$$
Let $\widetilde{X / N} \rightarrow Y$ be the contraction of the $(-1)$-curve $\pi^{-1}_*f(C)$. The surface $X / N$ is a~singular del Pezzo surface. Therefore $\widetilde{X / N}$ is a weak del Pezzo surface. There are no negative curves with self-intersection less than $-1$ on $Y$, thus the surface $Y$ is a del Pezzo surface by Proposition \ref{DPconnection}. Its degree is equal to
$$
K_Y^2 = K_{\widetilde{X / N}}^2 + 1 = K_{X / N}^2 + 1 = \frac{1}{2} (2K_X)^2 + 1 = 3.
$$

The cubic surface $Y$ contains a $\ka$-point that is the image of the $(-1)$-curve $\pi^{-1}_*f(C)$. Therefore $Y$ is $\ka$-unirational by Remark \ref{unirationality}, and the sets of $\ka$-points on $Y$ and $X / N$ are dense.
\end{proof}

\begin{remark}
\label{DP1typeImin}
Assume that $G = N$, $\rho(X)^G = 1$ and let us show that under the assumptions of Lemma \ref{DP1typeI} the surface $Y$ is not $\ka$-rational. Let $S_1$, $S_2$ and $S_3$ be the proper transforms of the curves $\pi^{-1}_*f(p_i)$ on $Y$. One has $S_i^2 = -1$.

Assume that all three points $p_i$ are defined over $\ka$. Then each curve $S_i$ is defined over~$\ka$, and $\rho(Y) = 3$. The complete linear system $|S_1 + S_2|$ gives a conic bundle \mbox{structure~$\varphi: Y \rightarrow \Pro^1_{\ka}$}. One can contract the curve $S_1$, that is a component of a singular fibre of~$\varphi$, and get a conic bundle $Z \rightarrow \Pro^1_{\ka}$ such that $K_Z^2 = 4$ and $\rho(Z) = 2$. The~surface~$Z$ is minimal by Theorem \ref{MinCB}, and is not $\ka$-rational by Theorem \ref{ratcrit}.

If there is a unique point $p_j$ defined over $\ka$, then one can contract the $(-1)$-curve $S_j$ and get a minimal del Pezzo surface $Z$ of degree $4$ with $\rho(Z) = 1$. The surface $Z$ is not $\ka$-rational by Theorem \ref{ratcrit}.

If the points $p_i$ are transitevely permuted by the Galois group $\Gal\left(\kka / \ka \right)$, then $\rho(Y) = 1$, and $Y$ is not $\ka$-rational by Theorem \ref{ratcrit}.

\end{remark}

\begin{lemma}
\label{DP1typeInorm}
Let a finite group $G$ of type $\mathrm{IV}$, $\mathrm{V}$, $\mathrm{VII}$, $\mathrm{VIII}$, $\mathrm{X}$ or $\mathrm{XI}$ act on a del Pezzo surface $X$ of degree $1$. Then the quotient $X / G$ is $\ka$-rational.
\end{lemma}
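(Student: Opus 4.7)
The plan is to exhibit, for each type, a normal subgroup $N \lhd G$ of order~$2$ and type~$\mathrm{I}$, reduce $X/G$ via Lemma~\ref{DP1typeI} to the quotient of a del Pezzo surface $Y$ of degree~$3$ by $G/N$, and then appeal to Theorem~\ref{main}. Reading Table~\ref{table1}, the cyclic generator $g$ satisfies that $g^2$ (types $\mathrm{IV}$, $\mathrm{V}$), $g^3$ (types $\mathrm{VII}$, $\mathrm{VIII}$), or $g^6$ (types $\mathrm{X}$, $\mathrm{XI}$) is an involution of the shape $(\pm x : \pm y : z : t)$ with a single sign inversion, which is a type-$\mathrm{I}$ involution after possibly swapping $x$ and $y$. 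Since $G$ is cyclic the subgroup $N = \langle g^{|G|/2} \rangle$ is automatically normal, and the curve $C$ contracted by Lemma~\ref{DP1typeI} is $G$-invariant by inspection of the generator. Lemma~\ref{DP1typeI} then yields a $G/N$-equivariant birational equivalence $X/N \approx Y$ with $Y$ a cubic surface carrying dense $\ka$-points, so $X/G \approx Y/(G/N)$ has a smooth $\ka$-point and Theorem~\ref{main} applies to the pair $(Y, G/N)$.

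For $d = 3$ the only exceptional case of Theorem~\ref{main} requires $|G/N| = 3$ together with only isolated fixed points. Types $\mathrm{IV}$, $\mathrm{V}$ give $|G/N| = 2$ and types $\mathrm{X}$, $\mathrm{XI}$ give $|G/N| = 6$, so the exceptional case cannot apply in these four cases and $\ka$-rationality follows at once. The remaining two cases are types $\mathrm{VII}$ and $\mathrm{VIII}$, where $|G/N| = 3$ and one must produce a curve of fixed points for the generator $\bar g$ of $G/N$ on $Y$. In both types the element $g^2$ is of type~$\mathrm{II}$ and fixes pointwise the irreducible anticanonical curve $\{x = 0\} \subset X$; since a point of $X/N$ is $\bar g$-fixed iff its preimage is fixed by $g$ or by $g^2$, the image of $\{x = 0\}$ is a curve of $\bar g$-fixed points on $X/N$.

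For type $\mathrm{VII}$ the curve $\{x = 0\}$ is distinct from the contracted curve $C = \{y = 0\}$, so it descends to a curve of $\bar g$-fixed points on $Y$ and Theorem~\ref{main} immediately gives $\ka$-rationality. The hard part will be type $\mathrm{VIII}$: in the relabelled coordinates needed to bring the type-$\mathrm{I}$ involution into the form of Lemma~\ref{DP1typeI}, the $g^2$-fixed curve coincides with $C$ and is therefore contracted in passing from $\widetilde{X/N}$ to $Y$. I expect, however, that the three $A_1$-exceptional divisors on $\widetilde{X/N}$ over the isolated $N$-fixed points --- whose proper transforms on $Y$ are $(-1)$-curves passing through the image of $C$ --- exhibit that image as an Eckardt point of $Y$ defined over $\ka$, after which one can finish by invoking the cubic-surface analysis of~\cite{Tr16b} (which underlies the $d = 3$ case of Theorem~\ref{main}) for $\CG_3$-quotients of a cubic with a $\ka$-rational Eckardt point. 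Alternatively one can bypass Lemma~\ref{DP1typeI} for type $\mathrm{VIII}$ by factoring as $X/G \approx (X / \langle g^2 \rangle)/\CG_2$ and handling the intermediate singular degree-$3$ surface together with its residual involution $(x : -y : z : t)$ directly.
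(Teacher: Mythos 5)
Your reduction is the same as the paper's: pass to the type-$\mathrm{I}$ involution $N=\langle g^{|G|/2}\rangle$, apply Lemma~\ref{DP1typeI} to land on a cubic surface $Y$ with dense $\ka$-points, and dispose of types $\mathrm{IV}$, $\mathrm{V}$, $\mathrm{X}$, $\mathrm{XI}$ because $|G/N|\neq 3$, and of type $\mathrm{VII}$ because the $g^2$-fixed anticanonical curve $\{x=0\}$ survives on $Y$ as a curve of $\overline{g}$-fixed points. All of that is correct, and you deserve credit for noticing the genuine subtlety in type $\mathrm{VIII}$ that the paper's own one-line justification (``$G/N$ has a curve of fixed points on $Y$'') glosses over: there the $g^2$-fixed curve is exactly the curve $C$ contracted by Lemma~\ref{DP1typeI}, so on $Y$ the residual element $\overline{g}$ has only isolated fixed points.

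The gap is that your proposed finish for type $\mathrm{VIII}$ does not close the case. The criterion governing $\ka$-rationality of $\CG_3$-quotients of cubic surfaces in \cite[Theorem 1.3]{Tr16b} (equivalently, case (3) of Theorem~\ref{main} together with Lemma~\ref{DP3C35}) is the fixed locus, i.e.\ the conjugacy class, of the order-$3$ element --- not the presence of a $\ka$-rational Eckardt point. Indeed the non-$\ka$-rational quotients of Lemma~\ref{DP3C35} and Example~\ref{DP3C35example} arise from cubics that do carry Eckardt points fixed by the $\CG_3$, so ``Eckardt point defined over $\ka$ plus invoke \cite{Tr16b}'' proves nothing by itself, and your second alternative is only a restatement of the problem. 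What actually works is to count the fixed points of $\overline{g}$ on $Y$: one gets the Eckardt point $P_0=h\pi^{-1}_*f(C)$, the images of the two isolated $g^2$-fixed points $(1:0:0:\pm\ii)$ (which lie off $C$ and off the singular locus), and one further fixed point on each of the three lines through $P_0$ (each line is $\overline{g}$-invariant and $\overline{g}$ acts on it with eigenvalues $\operatorname{diag}(\omega,1)$ in the projectivized tangent directions at $p_i$) --- six isolated fixed points in all. By the Lefschetz formula $\overline{g}$ then has Carter graph $A_2$ and $\rho(Y)^{\langle\overline{g}\rangle}=5$, which is not the dangerous class of Lemma~\ref{DP3C35} (that one has exactly three fixed points), so $Y/\langle\overline{g}\rangle$ is $\ka$-rational by \cite[Theorem 1.3]{Tr16b}. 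Alternatively, type $\mathrm{VIII}$ admits a one-line direct proof: the invariants $a=y^2/z$ and $b=x^3/t$ satisfy $t^2(b^2+1)=-z^3(a^3+Aa^2+1)$, from which one checks $[\ka(X):\ka(a,b)]=6$ and hence $\ka(X)^G=\ka(a,b)$ is purely transcendental.
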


\begin{proof}
The group $G$ contains a normal subgroup $N \cong \CG_2$ of type $\mathrm{I}$. Therefore by~Lemma~\ref{DP1typeI} the quotient $X / N$ is $G / N$-birationally equivalent to a cubic surface $Y$, and $Y(\ka)$ is dense.

One has $X / G \approx Y / (G / N)$. Note that if $G$ has type $\mathrm{VII}$ or $\mathrm{VIII}$ then \mbox{the group~$G / N \cong \CG_3$} has a curve consisting of fixed points on $Y$, and for the other types the order of $G / N$ is not $3$. Therefore in all cases the quotient~$Y / (G / N)$ is $\ka$-rational by~\cite[Theorem 1.3]{Tr16b}.

\end{proof}

\begin{lemma}
\label{DP1typeII}
Let a finite group $G$ act on a del Pezzo surface $X$ of degree $1$, and $N \cong \CG_3$ be a normal subgroup of type $\mathrm{II}$ in $G$. Then the quotient $X / N$ is $G / N$-birationally equivalent to a surface admitting a structure of conic bundle with $4$ singular fibres. Moreover, the set of $\ka$-points on $X / N$ is dense.
\end{lemma}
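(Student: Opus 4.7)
The plan is to analyse $X/N$ as a singular cubic surface and exhibit the conic bundle on an explicit birational model.

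First, by Theorem \ref{DP1groupclass} we may choose coordinates in $\Pro_{\kka}(1 : 1 : 2 : 3)$ so that $N = \langle g \rangle$ with $g \colon (x : y : z : t) \mapsto (\omega x : y : z : t)$. A direct computation shows that the $N$-fixed locus on $\XX$ consists of the anticanonical elliptic curve $E = \{x = 0\}$ together with two isolated fixed points $q_1, q_2 = (1 : 0 : 0 : \pm\ii)$; in local coordinates at each $q_i$ the tangent action has eigenvalues $\omega, \omega^{2}$, so $X/N$ has two $A_2$-singularities $\bar q_1, \bar q_2$ and is smooth elsewhere. The Hurwitz formula applied to $f \colon X \to X/N$ gives $f^{*}K_{X/N} = K_X - 2E = 3K_X$, hence $K_{X/N}^{2} = 3$, and $-K_{X/N}$ is ample since $-K_X$ is ample and $f$ is finite. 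Thus $X/N$ is a singular del Pezzo surface of degree $3$, i.e.\ a singular cubic in $\Pro^{3}_{\ka}$.

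Next I would exploit the second $N$-invariant anticanonical curve $F = \{y = 0\}$. It contains the three fixed points $p, q_1, q_2$, so Riemann--Hurwitz forces its image $\bar F = f(F)$ to be rational; moreover $\bar F$ passes through both $\bar q_1, \bar q_2$. By Remark \ref{singularities} its proper transform $\tilde F \subset \widetilde{X/N}$ on the minimal resolution satisfies $\tilde F^{2} = \tfrac{1}{3} - 2\cdot\tfrac{2}{3} = -1$, so $\tilde F$ is a $(-1)$-curve. Because any element of $\Aut_{\kka}(\XX)$ normalising $N$ must preserve the eigenspace decomposition of $g$ on the ambient weighted projective space, such an element acts diagonally on the coordinates $x, y$ (a swap of $x, y$ would send $g$ to an element outside $N$), and so preserves the set $\{y=0\}$; hence $\tilde F$ and the construction below are $G/N$-equivariant.

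Contracting $\tilde F$ produces a smooth weak del Pezzo surface $Z$ of degree $4$. Among the four $(-2)$-curves in the $A_2$-chains, the two that meet $\tilde F$ (one from each chain) become $(-1)$-curves $C_1', C_2'$ on $Z$; using the identities $\sigma^{*} C_i' = C_{i,2} + \tilde F$ on $\widetilde{X/N}$ one computes $C_1' \cdot C_2' = 1$, so the class $L := C_1' + C_2'$ satisfies $L^{2} = 0$ and $L \cdot (-K_Z) = 2$. Since $G/N$ permutes the pair $\{\bar q_1, \bar q_2\}$, and thus the pair $\{C_1', C_2'\}$, the class $L$ is $G/N$-invariant; the pencil $|L|$ therefore defines a $G/N$-equivariant conic bundle structure $Z \to \Pro^{1}_{\ka}$, and the standard formula $K_Z^{2} + n = 8$ forces exactly $n = 4$ singular fibres.

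Finally, for density of $\ka$-points, the base point $p$ is a $\ka$-rational point of $X$ lying on the fixed curve $E$; $N$ acts trivially in a neighbourhood of $p$ along $E$, so the quotient is smooth at $\bar p = f(p)$ and $\bar p \in (X/N)(\ka)$. Transported to $Z$, this yields a $\ka$-point on the conic bundle, so by \cite[Corollary 8]{KM17} the $\ka$-points of $Z$, and hence of $X/N$, are dense. The principal technical obstacle is the intersection-theoretic bookkeeping under the contraction of $\tilde F$: carefully tracking how the classes of $\bar E$, the four components of the two $A_2$-chains, and $\tilde F$ interact on $\widetilde{X/N}$ in order to verify $\tilde F^{2} = -1$, $C_1' \cdot C_2' = 1$, and that $|L|$ is base-point-free with the expected number of singular fibres.
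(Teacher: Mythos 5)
Your proof is correct and follows essentially the same route as the paper's: the same identification of the fixed locus (the anticanonical curve $x=0$ plus the two isolated points $(1:0:0:\pm\ii)$), the same two $A_2$-singularities, the same contraction of the $(-1)$-curve $\pi^{-1}_*f(\{y=0\})$ to a degree-$4$ surface, and the same appeal to \cite[Corollary 8]{KM17} for density of $\ka$-points. The only cosmetic difference is that the paper exhibits the conic pencil as the pushforward of the $N$-invariant pencil $\lambda y^2 = \mu z$ on $X$, whereas you reconstruct the same pencil intrinsically as $|C_1'+C_2'|$ from the images of the exceptional $(-2)$-curves after the contraction; these two pencils coincide, so this is a presentational rather than a substantive divergence.
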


\begin{proof}
By Theorem \ref{DP1groupclass} one can choose coordinates in $\Pro_{\kka}(1 : 1 : 2 : 3)$ such that the set of~$N$-fixed points on $\XX$ consists of a fixed curve $x = 0$, that has class $-K_X$, and two isolated fixed points $p_1$ and $p_2$ having coordinates $(1 : 0 : 0 :\pm \ii)$. The curve $C$ given by~$y = 0$ is~$N$-invariant and passes through the points $p_i$.

Let $f: X \rightarrow X / N$ be the quotient morphism, and
$$
\pi: \widetilde{X / N} \rightarrow X / N
$$
\noindent be the minimal resolution of singularities. The points $f(p_i)$ are $A_2$-singularities on $X / N$, and the curve $f(C)$ passes through these two points. Therefore
$$
\pi^{-1}_*f(C)^2 = f(C)^2 - \frac{4}{3} = \frac{1}{3}C^2 - \frac{4}{3} = -1.
$$
\noindent Therefore one can $G / N$-equivariantly contract $\pi^{-1}_*f(C)$ and get a surface $Y$. Its degree is equal to
$$
K_Y^2 = K_{\widetilde{X / N}}^2 + 1 = K_{X / N}^2 + 1 = \frac{1}{3} (3K_X)^2 + 1 = 4.
$$

Let $\mathcal{L}$ be a linear system on $\XX$ given by $\lambda y^2 = \mu z$. A general member of $\mathcal{L}$ is an \mbox{$N$-invariant} curve of genus $2$ passing through the isolated fixed points of $N$. Therefore a general member of the linear system $f_*(\mathcal{L})$ is a conic passing through the two singular points on~$X / N$. Hence the $(-1)$-curve $\pi^{-1}_*f(C)$ is contained in the unique member of the linear system $\pi^{-1}_*f_*(\mathcal{L})$ that is a chain of three genus $0$ curves, and the other members of this linear system are conics. Therefore $Y$ admits a structure of a conic bundle. This bundle has $4$ singular fibres since $K_Y^2 = 4$.

The image of $\pi^{-1}_*f(C)$ is a $\ka$-point on $Y$. Therefore $Y$ is $\ka$-unirational by Remark \ref{unirationality}, and the sets of $\ka$-points on $Y$ and $X / N$ are dense.
\end{proof}

\begin{remark}
\label{DP1typeIImin}
Note that by Theorems \ref{Minclass} and \ref{MinCB} the obtained surface $Y$ is $G / N$-minimal, if and only if $\rho(X)^G = 1$ and the two isolated fixed points of $N$ are permuted by the \mbox{group $G \times \Gal \left( \kka / \ka \right)$}, since otherwise $\rho\left( Y \right)^{G / N} > 2$.

In particular, if $G = N$ and $\rho(X)^N = 1$, then by Theorem \ref{ratcrit} the quotient $X / N$ is $\ka$-rational if and only if the two isolated fixed points of $N$ are not permuted by the group~$\Gal \left( \kka / \ka \right)$.
\end{remark}

\begin{lemma}
\label{DP1typeVI}
Let a finite group $G$ of type $\mathrm{VI}$ act on a del Pezzo surface $X$ of degree $1$. Then the quotient $X / G$ is $\ka$-rational.

\end{lemma}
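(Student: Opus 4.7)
The plan is to mimic Lemmas \ref{DP1typeI} and \ref{DP1typeII}: normalize equations by Theorem \ref{DP1groupclass}, determine the fixed locus of $G$ and the resulting singularities on $X/G$, compute the degree via the Hurwitz formula, and apply Remark \ref{geq5touse}.

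First I would put $\XX$ in the form $(x^5+y^5)y + Ay^4 z + z^3 + t^2 = 0$ in $\Pro_{\kka}(1:1:2:3)$, with $G = \langle g \rangle$ and $g: (x:y:z:t) \mapsto (\xi_5 x : y : z : t)$. The fixed locus of $g$ on $\XX$ then consists of the elliptic curve $C = \{x = 0\}$ in the class $-K_X$ and the single isolated fixed point $P = (1:0:0:0)$; being the unique isolated fixed component, $P$ is defined over $\ka$. In the affine chart $x = 1$, eliminating $y$ by the surface equation yields local coordinates $(v, w) = (z/x^2, t/x^3)$ at $P$ in which $g$ acts as $\operatorname{diag}(\xi_5^{-2}, \xi_5^{-3})$; hence $f(P)$ is an $A_4$-singularity on $X/G$. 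Along $C$ the action of $g$ is a pseudo-reflection, so the quotient is smooth away from $f(P)$ by the Chevalley--Shephard--Todd theorem.

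Next I would compute $K_{X/G}^2$. The morphism $f: X \to X/G$ is ramified only along $C$ with index $5$, so the Hurwitz formula gives $f^{*}K_{X/G} = K_X - 4C = 5K_X$, whence
$$
K_{X/G}^2 = \frac{(5K_X)^2}{5} = 5.
$$
Since $A_4$ is a Du Val singularity, the minimal resolution $\widetilde{X/G}$ also has $K^2 = 5$ by Remark \ref{singularities}. The base point $p = (0:0:-1:1)$ of $|-K_X|$ is a $\ka$-point of $X$ lying on $C$, so $f(p)$ is a smooth $\ka$-point of $X/G$ which lifts to a $\ka$-point of $\widetilde{X/G}$. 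Being a smooth geometrically rational $\ka$-surface with $K^2 \geqslant 5$ and a $\ka$-point, $\widetilde{X/G}$ is $\ka$-rational by Remark \ref{geq5touse}, and hence so is $X/G$.

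I do not foresee a real obstacle; the only slightly delicate step is the linearization of $g$ at $P$, which is a routine computation in the weighted projective chart identifying the singularity type as $A_4$. Everything else then parallels Lemmas \ref{DP1typeI} and \ref{DP1typeII} verbatim.
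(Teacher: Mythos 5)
Your proof is correct and follows essentially the same route as the paper: identify the fixed locus (the anticanonical fixed curve plus one isolated fixed point giving an $A_4$-singularity), compute $K_{X/G}^2 = 5$ via Hurwitz, and conclude by the $K^2 \geqslant 5$ rationality criterion. The only cosmetic difference is the choice of $\ka$-point — you use the image of the base point of $|-K_X|$, while the paper uses an intersection point of the $(-2)$-curves over the $A_4$-singularity; both work.
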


\begin{proof}

By Theorem \ref{DP1groupclass} one can choose coordinates in $\Pro_{\kka}(1 : 1 : 2 : 3)$ such that the set of~$G$-fixed points on $\XX$ consists of a fixed curve $x = 0$, that has class $-K_X$, and an isolated fixed point $q = (1 : 0 : 0 : 0)$.

Let $f: X \rightarrow X / G$ be the quotient morphism, and $Y \rightarrow X / G$ be the minimal resolution of singularities. By the Hurwitz formula one has
$$
K_{X / G}^2 = \frac{1}{5}(5K_X)^2 = 5.
$$

The point $f(q)$ is $A_4$-singularity on $X / G$. Therefore $K_Y^2 = K_{X / G}^2 = 5$. Moreover, at least one point of intersection of $(-2)$-curves on $Y$ is defined over $\ka$. Thus the assertion of the lemma follows from Theorem \ref{ratcrit}. 

\end{proof}

Now we consider groups containing a normal subgroup of type $\mathrm{III}$.

\begin{lemma}
\label{DP1typeIII}

Let a finite group $G$ act on a del Pezzo surface $X$ of degree $1$, and $N \cong \CG_3$ be a normal subgroup of type $\mathrm{III}$ in $G$. Then the quotient $X / N$ is $G / N$-birationally equivalent to a surface $Y$ such that $K_Y^2 = 1$ and $Y$ admits structure of a conic bundle. Moreover, the set of $\ka$-points on $X / N$ is dense.

\end{lemma}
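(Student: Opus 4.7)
The plan is to mirror the strategy used in Lemmas \ref{DP1typeI} and \ref{DP1typeII}, adapting it to the fixed-point picture of a type $\mathrm{III}$ action. Using Theorem \ref{DP1groupclass}, I would work in $\kka$-coordinates in which the equation of $\XX$ is $x^6 + Ax^3y^3 + y^6 + Bx^2y^2z + z^3 + t^2 = 0$ and $N$ is generated by $g\colon (x:y:z:t) \mapsto (\omega x:\omega^2 y:z:t)$. A direct enumeration in $\Pro_{\kka}(1:1:2:3)$ shows that $N$ has no fixed curve and that its fixed locus on $\XX$ consists of the anticanonical base point $p = (0:0:-1:1)$ together with the four points $(1:0:0:\pm\ii)$ and $(0:1:0:\pm\ii)$. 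At $p$ the induced action on $T_p\XX$ is $\operatorname{diag}(\omega,\omega^2)$, yielding an $A_2$-singularity on $X/N$; at each of the four remaining fixed points the local action is the scalar $\omega\cdot\mathrm{I}$, producing a $\frac{1}{3}(1,1)$-singularity. Since the ramification divisor is empty, the Hurwitz formula gives $K_X = f^*K_{X/N}$, whence $K_{X/N}^2 = \tfrac{1}{3}$, and Remark \ref{singularities} yields $K_{\widetilde{X/N}}^2 = \tfrac{1}{3} - 4\cdot\tfrac{1}{3} = -1$ for the minimal resolution $\pi\colon \widetilde{X/N} \to X/N$.

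The next step is to identify a natural $G$-invariant pair of $(-1)$-curves to contract. The curves $C_x = \{x=0\}$ and $C_y = \{y=0\}$ on $X$ each lie in $|-K_X|$, are $N$-invariant but not pointwise fixed, and their union is cut out by the unique (up to scalar) $N$-invariant quadratic $xy$ in $x,y$, hence is defined over $\ka$. The curve $C_x$ meets $p$ and the two fixed points $(0:1:0:\pm\ii)$, while $C_y$ meets $p$ and $(1:0:0:\pm\ii)$. Using Remark \ref{singularities} together with $f(C_x)^2 = \tfrac{1}{3}C_x^2 = \tfrac{1}{3}$, I check that
\[
\pi^{-1}_*f(C_x)^2 = \tfrac{1}{3} - \tfrac{2}{3} - 2\cdot\tfrac{1}{3} = -1,
\]
and likewise $\pi^{-1}_*f(C_y)^2 = -1$. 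At $p$ the curves $C_x, C_y$ are tangent to distinct eigendirections of $N$, so their proper transforms enter opposite ends of the $A_2$-chain and are disjoint on $\widetilde{X/N}$. The union $\pi^{-1}_*f(C_x)\cup\pi^{-1}_*f(C_y)$ is stable under every extension of $N$ that can occur here (type $\mathrm{IX}$ preserves each $C_i$ individually, type $\mathrm{XII}$ swaps them), so one may $G/N$-equivariantly contract both to obtain $b\colon \widetilde{X/N} \to Y$ with $K_Y^2 = -1 + 2 = 1$.

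For the conic bundle structure on $Y$ I would use the pencil $\mathcal{L} = \{\lambda xy + \mu z = 0\}$, the $\ka$-rational span of the only two $N$-invariant degree-$2$ monomials in $x,y,z$. A general member $D\in\mathcal{L}$ lies in $|-2K_X|$, is smooth of genus $2$, avoids $p$, and passes through the four scalar fixed points; the restriction $f|_D$ is $3$-to-$1$ ramified to order $3$ at each of them, so Riemann--Hurwitz forces $g(f(D)) = 0$. The self-intersection is $f(D)^2 = \tfrac{4}{3}$, and correcting by $-\tfrac{1}{3}$ at each $\frac{1}{3}(1,1)$-singularity yields $\pi^{-1}_*f(D)^2 = 0$. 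Different members of $\mathcal{L}$ have different tangent directions at each scalar fixed point, so their proper transforms meet the corresponding $(-3)$-curves at distinct points, and the analogous bookkeeping gives $\pi^{-1}_*f(D)\cdot\pi^{-1}_*f(C_x) = \tfrac{2}{3} - 2\cdot\tfrac{1}{3} = 0$, and similarly for $C_y$. Hence $b_*\pi^{-1}_*f(\mathcal{L})$ is a base-point-free pencil of smooth rational curves of self-intersection $0$ on $Y$, giving the required conic bundle structure.

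Finally, for density of $\ka$-points, the two $(-2)$-curves of the resolution of $f(p)$ are either both $\ka$-defined (if $\omega\in\ka$) or Galois-swapped (if $\omega\notin\ka$); in either case their intersection is a smooth $\ka$-point of $\widetilde{X/N}$, untouched by the contractions of $\pi^{-1}_*f(C_x)$ and $\pi^{-1}_*f(C_y)$, so its image on $Y$ is a $\ka$-point. Combined with the conic bundle structure and $K_Y^2 = 1$, Remark \ref{unirationality} yields $\ka$-unirationality of $Y$, hence density of $Y(\ka)$ and of $(X/N)(\ka)$. The main obstacle I anticipate is the careful intersection-theoretic bookkeeping at the $A_2$-singularity: verifying that $C_x$ and $C_y$ really do attach to opposite ends of the $A_2$-chain, so that the two proper transforms are genuinely disjoint and the simultaneous contraction descends to a morphism over $\ka$.
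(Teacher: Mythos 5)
Your proposal is correct and follows essentially the same route as the paper's proof: the same fixed-point analysis (one $A_2$-point at $p$ and four $\tfrac{1}{3}(1,1)$-points), contraction of the two disjoint $(-1)$-curves $\pi^{-1}_*f(C_x)$ and $\pi^{-1}_*f(C_y)$ to reach $K_Y^2=1$, the pencil $\lambda xy=\mu z$ for the conic bundle structure, and the intersection point of the components of the resulting singular fibre as the $\ka$-point. You merely make explicit several verifications (the Riemann--Hurwitz genus computation, the intersection-number bookkeeping, and the disjointness of the two proper transforms along the $A_2$-chain) that the paper leaves as "one can easily check," and these are all correct.
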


\begin{proof}

The group $N$ has five isolated fixed points $p = (0: 0 : -1 : 1)$ and
$$
q_{x1} = (0 : 1 : 0 : \ii), \quad q_{x2} = (0 : 1 : 0 : -\ii), \quad q_{y1} = (1 : 0 : 0 : \ii), \quad q_{y2} = (1 : 0 : 0 : -\ii).
$$
On the tangent space of $\XX$ at the point $p$ the group $N$ acts as $\langle\operatorname{diag}(\omega, \omega^2)\rangle$ and on the tangent spaces of $\XX$ at the points $q_{xi}$ and $q_{yi}$ the group $N$ acts as~$\langle\operatorname{diag}(\omega, \omega)\rangle$.

Let $C_x$ and $C_y$ be curves given by $x = 0$ and $y = 0$ respectively. These curves \mbox{are $N$-invariant}. The curve $C_x$ passes through the points $p$ and $q_{xi}$, and the curve $C_y$ passes through the points $p$ and $q_{yi}$.

The surface $X / N$ has an $A_2$-singularity and four $\frac{1}{3}(1,1)$-singularities. Let \mbox{$f: X \rightarrow X / N$} be the quotient morphism, and
$$
\pi: \widetilde{X / N} \rightarrow X / N
$$
\noindent be the minimal resolution of singularities. One can easily check that the proper transforms $\pi^{-1}_*f(C_x)$ and $\pi^{-1}_*f(C_y)$ are two disjoint $(-1)$-curves. Let $h: \widetilde{X / N} \rightarrow Y$ be the \mbox{$G / N$-equivariant} contraction of these curves. Then
$$
K_Y^2 = K_{\widetilde{X / N}}^2 + 2 = K_{X/ N}^2 - 4 \cdot \frac{1}{3} + 2 = \frac{1}{3}K_X^2 + \frac{2}{3} = 1.
$$

Let $\mathcal{L}$ be a linear system on $\XX$ given by $\lambda xy = \mu z$. A general member of $\mathcal{L}$ is \mbox{an $N$-invariant} curve of genus $2$ passing through the $N$-fixed points $q_{xi}$ and $q_{yi}$. Therefore a general member of the linear system $f_*(\mathcal{L})$ is a conic passing through the \mbox{four $\frac{1}{3}(1,1)$-singularities} on~$X / N$. Hence the $(-1)$-curves $\pi^{-1}_*f(C_x)$ and $\pi^{-1}_*f(C_y)$ are contained in the unique member $\widetilde{S}$ of the linear system $\pi^{-1}_*f_*(\mathcal{L})$ that is a chain of four genus $0$ curves, and the other members of this linear system are conics. Therefore the linear system $h_*\pi^{-1}_*f_*(\mathcal{L})$ defines a structure of a conic bundle on~$Y$.

The point of intersection of the irreducible components of the singular fibre $S = h \left( \widetilde{S} \right)$ is a $\ka$-point on $Y$. Therefore $Y$ is $\ka$-unirational by Remark \ref{unirationality}, and the sets of $\ka$-points on $Y$ and $X / N$ are dense.

\end{proof}

\begin{remark}
\label{DP1typeIIImin}

Note that if at least one of the points $q_{xi}$ or $q_{yi}$ is defined over $\ka$ then the constructed conic bundle $Y \rightarrow \Pro^1_{\ka}$ has a section defined over $\ka$. Therefore it is $\ka$-rational.

If $\rho(X)^G = 1$, the points $q_{xi}$ are permuted by the group $G \times \Gal \left( \kka / \ka \right)$, and the points~$q_{yi}$ are permuted by this group, then one has
$$
\rho(Y)^{G / N} = \rho\left(\widetilde{X / N}\right)^{G / N} - 2 = \rho(X / N)^{G / N} + 2 = \rho(X)^G + 2 = 3.
$$
Moreover, there is a singular fibre $S$ which components are $G / N$-invariant and defined over $\ka$. Let $Y \rightarrow Z$ be a $G / N$-equivariant contraction of one of these components. Then $K_Z^2 = 2$, $\rho(Z)^{G / N} = 2$, and $Z$ is $G / N$-minimal by Theorem \ref{MinCB}.

Also, if $\rho(X)^G = 1$, and all the points $q_{xi}$ and $q_{yi}$ lie in one $G \times \Gal \left( \kka / \ka \right)$-orbit, then one can check that $\rho(Y)^{G / N} = 2$, and $Y$ is $G / N$-minimal by Theorem \ref{MinCB}.

In particular, if in these cases $G = N$, then by Theorem \ref{ratcrit} the quotient $X / N$ is \mbox{not~$\ka$-rational}.
\end{remark}

\begin{lemma}
\label{DP1typeIX}

Let a finite group $G \cong \CG_6$ of type $\mathrm{IX}$ act on a del Pezzo surface $X$ of~degree~$1$. Then the quotient $X / G$ is birationally equivalent to a surface $W$ such that~$K_W^2 = 4$ and $W$ admits structure of a conic bundle. Moreover, the set of $\ka$-points on $X / G$ is dense.

\end{lemma}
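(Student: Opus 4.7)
The group $G = \langle g \rangle \cong \CG_6$ of type $\mathrm{IX}$ contains the normal subgroup $N = \langle g^2 \rangle \cong \CG_3$ of type $\mathrm{III}$, and $g^3 : (x:y:z:t) \mapsto (-x:y:z:t)$ is the type $\mathrm{I}$ involution whose fixed curve is the elliptic curve $C_x = \{x = 0\}$. My plan is to compute $X/G$ as $(X/N)/(G/N)$, applying Lemma~\ref{DP1typeIII} to the first stage and then analysing the induced involution $\sigma = g^3 \cdot N$ on the resulting conic bundle.

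By Lemma~\ref{DP1typeIII}, $X/N$ is $G/N$-birationally equivalent to a conic bundle $\varphi: Y \to \Pro^1_{\ka}$ with $K_Y^2 = 1$ coming from the pencil $\lambda xy = \mu z$ on $X$, and $Y(\ka)$ is dense. Since $g^3$ sends $xy$ to $-xy$ and fixes $z$, the involution $\sigma$ descends on the base to $[\lambda:\mu] \mapsto [-\lambda:\mu]$ with two fixed points $[0:1]$ and $[1:0]$. Generic fibres of $\varphi$ are thus $\Pro^1_{\ka}$'s swapped by $\sigma$ in pairs, so $Y/\sigma$ fibres over $\Pro^1_{\ka}/\sigma \cong \Pro^1_{\ka}$ with generic fibre again $\Pro^1_{\ka}$, which produces the desired conic bundle structure.

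To pin down the invariants of this conic bundle I would use the global Hurwitz formula for $f: X \to X/G$. Since $C_x \sim -K_X$ is the only curve fixed pointwise by a nontrivial subgroup of $G$, one obtains $f^*(K_{X/G}) = 2K_X$, so $K_{X/G}^2 = \frac{(2K_X)^2}{|G|} = \frac{2}{3}$. An analysis of the $G$-orbits of fixed-point sets of subgroups of $G$ shows that $X/G$ has two singularities of type $\frac{1}{3}(1,1)$ (at the images of $p = (0:0:-1:1)$ and of the swapped pair $\{q_{y1},q_{y2}\}$), two of type $A_2$ (at the images of the $G$-fixed points $q_{x1},q_{x2}$), and one of type $A_1$ (at the common image of the cyclically permuted triple $\{r_1,r_2,r_3\}$). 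By Remark~\ref{singularities} the minimal resolution of $X/G$ satisfies $K^2 = \frac{2}{3} - 2 \cdot \frac{1}{3} = 0$ and inherits the conic bundle structure with eight singular fibres. Contracting one $(-1)$-component from each of four suitable singular fibres then produces $W$ with $K_W^2 = 4$ and four singular fibres.

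Density of $\ka$-points on $X/G$ follows from the density of $Y(\ka)$ established in Lemma~\ref{DP1typeIII} via the finite quotient map $Y \to Y/\sigma \approx X/G$. The main obstacle is the careful tracking of $\sigma$-fixed loci on $Y$: the $g^3$-fixed elliptic curve $C_x$, contracted during the construction of $Y$, descends to a single fixed point of $\sigma$, and the $G$-orbit $\{r_1,r_2,r_3\}$, which lies on the other contracted curve, likewise collapses to a single fixed point. One must verify that these collapses do not introduce components of higher arithmetic genus into the resulting fibration on $Y/\sigma$, so that the quotient is genuinely a conic bundle of the asserted type.
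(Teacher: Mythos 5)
Your reduction is the same as the paper's first step: pass to the type $\mathrm{III}$ normal subgroup $N=\langle g^2\rangle$, apply Lemma~\ref{DP1typeIII} to obtain the conic bundle $Y\to B$ with $K_Y^2=1$, and note that the residual involution acts faithfully on the base with fixed points $[0:1]$ and $[1:0]$. Your global Hurwitz computation $K_{X/G}^2=\tfrac{2}{3}$ and the singularity census of $X/G$ (two points of type $\tfrac{1}{3}(1,1)$, two of type $A_2$, one of type $A_1$) are correct. The gap is in the sentence claiming that the minimal resolution of $X/G$ has $K^2=0$ and ``inherits the conic bundle structure with eight singular fibres.'' The relation $K^2+n=8$ counts singular fibres only for a fibration \emph{all} of whose fibres are reduced conics, and that hypothesis fails here: the point $p$ lies on exactly one member of the pencil $\lambda xy=\mu z$ (namely $xy=0$), so the $(-3)$-curve over its image is a \emph{vertical} component of the fibre over the image of $[1:0]$, as is the $(-2)$-curve over the image of the orbit $\{r_1,r_2,r_3\}$; together with the transforms of the images of $C_x$ and $C_y$ this fibre has at least four components, one of self-intersection $-3$, and is nothing like a conic. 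So the resolution is not a conic bundle, the count ``eight'' is meaningless, and the prescription ``contract one $(-1)$-component from each of four suitable singular fibres'' is not available as stated. You flag this yourself in your last paragraph, but that verification is precisely the substance of the proof.

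What is actually needed (and what the paper does, working on $Y$ rather than on $X/G$) is a fibre-by-fibre analysis of the two $\sigma$-invariant fibres of $Y\to B$. Since $Y\to B$ has seven singular geometric fibres and only two base points are $\sigma$-fixed, exactly one invariant fibre $S$ (over $xy=0$) is singular and the other, $F$ (over $z=0$), is smooth; the remaining six singular fibres pair up into three singular conics of the quotient. On $F$ the involution has two fixed points, producing two $A_1$-points whose resolution gives a chain $(-2,-1,-2)$; contracting the $(-1)$-curve yields a fourth singular fibre. On $S$ one component is pointwise fixed and the other is not, again giving a chain $(-2,-1,-2)$, and here one must contract the $(-1)$-curve \emph{and then} one of the former $(-2)$-curves to reach a smooth fibre --- which requires checking that these two curves are not conjugate under $\Gal\bigl(\kka/\ka\bigr)$ (they are not, because exactly one of them is the exceptional curve of the $A_1$-singularity). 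Your proposal addresses neither the structure of these special fibres nor the $\ka$-rationality of the contractions, so as written it does not establish $K_W^2=4$ or the conic bundle structure on $W$.
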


\begin{proof}
A group of type $\mathrm{IX}$ contains a normal subgroup $N$ of type $\mathrm{III}$. By Lemma \ref{DP1typeIII} the quotient $X / N$ is $G / N$-birationally equivalent to a surface~$Y$ such that $K_Y^2 = 1$ and~$Y$ admits a structure of a conic bundle $Y \rightarrow B$. The group $G / N \cong \CG_2$ faithfully acts on~$\mathcal{L}$. Therefore this group faithfully acts on the base of the conic bundle $Y \rightarrow B$.

The group $G / N$ has two invariant fibres of $Y \rightarrow B$. The conic bundle $Y \rightarrow B$ has seven singular geometric fibres, therefore one of the $G / N$-invariant fibres $S$ is singular, and the other $G / N$-invariant fibre $F$ is smooth.

Let $f: Y \rightarrow Y / (G / N)$ be the quotient morphism and
$$
\pi: \widetilde{Y / (G / N)} \rightarrow Y / (G / N)
$$
\noindent be the minimal resolution of singularities. All fibres of the composition of morphisms
$$
\widetilde{Y / (G / N)} \rightarrow Y / (G / N) \rightarrow B / (G / N)
$$
\noindent except $\pi^{-1}f(F)$ and $\pi^{-1}f(S)$ are conics.

The group $G / N$ has two isolated fixed points $r_1$ and $r_2$ on the fibre $F \cong \Pro^1_{\ka}$. Therefore there are two $A_1$-singularities on $f(F)$, and $\pi^{-1}f(F)$ is a chain of three curves with self-intersection numbers $-2$, $-1$ and $-2$. One can contract the $(-1)$-curve and get a singular fibre.

One can check that one component of $S$ is pointwisely fixed by $G / N$, and on the other~$G / N$ acts faithfully. Therefore $\pi^{-1}f(S)$ is a chain of three curves with self-intersection numbers $-2$, $-1$ and $-2$. Moreover, the $(-2)$-curves cannot be permuted by the Galois group, since exactly one of these curves is the preimage of $A_1$-singularity. Thus in the fibre $\pi^{-1}f(S)$ one can consequently contract the $(-1)$-curve and one of the transforms of the $(-2)$-curves, and get a smooth fibre.

As a result of these operations we get a conic bundle $W \rightarrow \Pro^1_{\ka}$ with four singular fibres: one is the transform of $F$, and the three others are images of $G/N$-invariant pairs of singular fibres of $Y \rightarrow B$. One has $K_W^2 = 4$, and $W \approx Y / (G / N) \approx X / G$.

The sets of $\ka$-points on $W$, $Y / (G / N)$ and $X / G$ are dense, since the set of $\ka$-points on~$Y$ is dense by Lemma \ref{DP1typeIII}.

\end{proof}

\begin{remark}
\label{DP1typeIXmin}
Note that the $G / N$-invariant fibres $S$ and $F$ of $Y \rightarrow B$ correspond to the members $xy = 0$ and $z = 0$ of the linear system $\mathcal{L}$ constructed in the proof of Lemma~\ref{DP1typeIII}. The group $\CG_2 \subset G$ has two fixed points on the curve $z = 0$:
$$
q_{x1} = (0 : 1 : 0 : \ii), \quad q_{x2} = (0 : 1 : 0 : -\ii).
$$

Note that if these points are defined over $\ka$ then the isolated $G/N$-fixed points $r_1$ and~$r_2$ on $Y$ are defined over $\ka$. Thus $W$ is not minimal, and for its minimal model $U$ one has~$K_U^2 \geqslant K_W^2 + 1 = 5$. Therefore $W$ is $\ka$-rational by Theorem \ref{ratcrit}.

If $\rho(X)^G = 1$ and the points $q_{xi}$ are permuted by the group $\Gal \left( \kka / \ka \right)$, then $\rho(Y)^{G / N} = 3$ by Remark \ref{DP1typeIIImin}, since the points~$q_{yi}$ are permuted by the group $\CG_2 \subset G$. Moreover, the points $r_1$ and $r_2$ are permuted by the group $\Gal \left( \kka / \ka \right)$. So one has
$$
\rho(W) = \rho\left( \widetilde{Y / (G / N)} \right) - 3 = \rho(Y / (G / N)) - 1 = \rho(Y)^{G / N} - 1 = 2.
$$
Therefore $W$ is minimal by Theorem \ref{MinCB}, and the quotient \mbox{$X / G \approx Y / (G / N) \approx W$} is not $\ka$-rational by Theorem \ref{ratcrit}.
\end{remark}

\begin{lemma}
\label{DP1typeXII}
Let a finite group $G$ of type $\mathrm{XII}$ act on a del Pezzo surface $X$ of degree~$1$. Then the quotient $X / G$ is $\ka$-rational.
\end{lemma}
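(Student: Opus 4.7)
By Theorem~\ref{DP1groupclass}, we may choose coordinates so that $G\cong\DG_6$ is generated by the type-$\mathrm{III}$ element $\alpha_0\colon(x,y,z,t)\mapsto(\omega x,\omega^2 y,z,t)$ and the involution $\sigma\colon(x,y,z,t)\mapsto(y,x,z,t)$. The cyclic subgroup $N=\langle\alpha_0\rangle$ is normal in $G$, with $G/N\cong\CG_2$ generated by the image of $\sigma$. The plan is to apply Lemma~\ref{DP1typeIII} to $N$ to obtain a $G/N$-equivariant birational equivalence between $X/N$ and a surface $Y$ with $K_Y^2=1$ admitting a conic bundle $\phi\colon Y\to\Pro^1_\ka$, induced by the $\sigma$-invariant pencil $\mathcal{L}$ on $X$ given by $\lambda xy=\mu z$, with dense $\ka$-points. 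Thus $X/G\approx Y/\langle\sigma\rangle$, and the remaining task is to analyze the $\sigma$-action on $Y$.

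Since $\sigma$ fixes both $xy$ and $z$, it preserves every member of $\mathcal{L}$ and therefore acts trivially on the base $\Pro^1_\ka$ of $\phi$. On the other hand, the $\sigma$-fixed locus on $X$ contains the elliptic curve $C_0=\{y=x\}$ of class $-K_X$, whose image on $Y$ is horizontal, so $\sigma$ acts nontrivially on a generic fiber. Hence $Y/\langle\sigma\rangle$ admits a natural map to $\Pro^1_\ka$ with generic fiber $\Pro^1_\ka/\CG_2\cong\Pro^1_\ka$, exhibiting $X/G$ as birationally equivalent to a conic bundle $W\to\Pro^1_\ka$ with $W(\ka)$ dense.

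To establish $\ka$-rationality of $W$, I would compute the relevant $K^2$'s directly. The Hurwitz formula applied to $X\to X/G$ with ramification divisor $R=C_0+C_0'+C_0''\sim-3K_X$ (the $N$-orbit of $C_0$, each component with inertia $\CG_2$) yields $K_{X/G}^2=(4K_X)^2/6=8/3$. The $G$-action on $T_pX$ at the anticanonical base point $p$ is generated by the three pseudo-reflections $\sigma,\sigma\alpha_0,\sigma\alpha_0^2$, so by Chevalley--Shephard--Todd the image of $p$ in $X/G$ is smooth; the only singularities of $X/G$ are two $\tfrac13(1,1)$-points at the images of the $G$-orbits $\{q_{x1},q_{y1}\}$ and $\{q_{x2},q_{y2}\}$. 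This gives $K^2_{\widetilde{X/G}}=2$ and six singular fibers of the induced conic bundle. The special member of $\mathcal{L}$ coming from $xy=0$ has its two components $C_x,C_y$ swapped by $\sigma$, so becomes a smooth fiber in $W$. I would then verify that each of the remaining singular fibers of $W$ contains a $\ka$-rational $(-1)$-component that can be contracted $G$-equivariantly, ultimately reaching a minimal model with $K^2\geq 5$, whereupon $\ka$-rationality follows from Theorem~\ref{ratcrit} using density of $\ka$-points.

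The main obstacle is precisely this last verification: one must show that sufficiently many singular fibers of $W$ are $\ka$-reducible (components individually defined over $\ka$, not interchanged by Galois) to bring $K^2$ up to at least $5$ after contraction. If this fails in some cases, an alternative route would be to exhibit a $\ka$-rational section of $W\to\Pro^1_\ka$, for instance by tracking the image of a suitable curve through the points $q_{xi},q_{yi}$ or the smooth image of the $G$-fixed point $p$.
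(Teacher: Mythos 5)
Your reduction to the quotient of the conic bundle $Y \to B$ from Lemma~\ref{DP1typeIII} by $G/N \cong \CG_2$ is exactly the paper's starting point, and your observations that $\sigma$ acts trivially on the base and nontrivially on the general fibre are correct. But the proof then stalls precisely where you say it does, and the gap is real: ending at $K^2_{\widetilde{X/G}} = 2$ with six singular fibres, you would need to contract components in at least three of them over $\ka$ to reach $K^2 \geqslant 5$, and you give no argument for why any of those fibres is $\ka$-reducible. (A smaller inaccuracy: the singular locus of $X/G$ is not only the two $\tfrac13(1,1)$-points; the involutions $\sigma$, $\sigma\alpha_0$, $\sigma\alpha_0^2$ also have isolated fixed points away from their fixed curves, contributing $A_1$-singularities. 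These do not change $K^2$, but they are exactly the nodes of the relevant singular fibres, so ignoring them hides the structure you need.)

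The missing idea in the paper's argument is that one never has to check rationality of individual components at all: since $G/N$ acts trivially on the base, the quotient is computed fibre by fibre. A singular fibre $F = E \cup E'$ of $Y \to B$ either has its components swapped by $G/N$, in which case its image is already a smooth conic, or has them preserved, in which case the unique isolated fixed point $E \cap E'$ gives an $A_1$-singularity and the resolved fibre is a chain of self-intersections $-1$, $-2$, $-1$. The two end $(-1)$-curves of such a chain are disjoint, and the union of all of them over all such fibres is Galois-stable, so they can all be contracted over $\ka$ simultaneously, turning every such fibre into a smooth one. The result is a conic bundle $Z \to B$ with \emph{no} singular fibres, $K_Z^2 = 8$, and dense $\ka$-points, whence $\ka$-rationality by Theorem~\ref{ratcrit}. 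Your fallback suggestion of finding a $\ka$-rational section is unnecessary once this fibrewise analysis is carried out.
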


\begin{proof}

A group of type $\mathrm{XII}$ contains a normal subgroup $N$ of type $\mathrm{III}$. By Lemma \ref{DP1typeIII} the quotient $X / N$ is $G / N$-birationally equivalent to a surface~$Y$ such that $K_Y^2 = 1$ and~$Y$ admits a structure of a conic bundle $Y \rightarrow B$. The group $G / N \cong \CG_2$ trivially acts on~$\mathcal{L}$, since any element of $G \cong \DG_6$ preserves any curve in $\mathcal{L}$. Therefore this group trivially acts on the base of the conic bundle $Y \rightarrow B$.

The group $G / N$ faithfully acts on any smooth fibre of $Y \rightarrow B$. Therefore this group fixes two points on any smooth fibre, and the set of $(G / N)$-fixed points on $Y$ consists of a bisection $C$ of $Y \rightarrow B$, and some isolated fixed points lying on singular fibres.

Let $f: Y \rightarrow Y / (G / N)$ be the quotient morphism and
$$
\pi: \widetilde{Y / (G / N)} \rightarrow Y / (G / N)
$$
\noindent be the minimal resolution of singularities. Consider a singular fibre $F$ of the conic bundle~$Y \rightarrow B$ consisting of two irreducible components $E$ and $E'$. There are two possibilities: either the group $G / N \cong \CG_2$ permutes $E$ and $E'$, or this group preserves the components. In the former case there are no isolated fixed points on $F$, and $f(F)$ is a smooth conic. In~the latter case there is a unique isolated fixed point $E \cap E'$ of $G / N$. Therefore $f(E \cap E')$ is $A_1$-singularity, and $\pi^{-1}f(F)$ is a chain of three curves with self-intersection numbers~$-1$, $-2$ and $-1$. For each such chain one can contract two $(-1)$-curves, and get a conic bundle~$Z \rightarrow B$ without singular fibres. One has $K_Z^2 = 8$, and $X / G \approx Y / (G / N) \approx Z$.

The sets of $\ka$-points on $Z$, $Y / (G / N)$ and $X / G$ are dense, since the set of $\ka$-points on~$Y$ is dense by Lemma \ref{DP1typeIII}. Therefore $X / G \approx Y / (G / N) \approx Z$ is $\ka$-rational by Theorem \ref{ratcrit}.


\end{proof}

Now we can prove Proposition \ref{DP1}.

\begin{proof}[Proof of Proposition \ref{DP1}]
If $G$ contains an element $\alpha$ or $\beta$ then the quotient $X / G$ is \mbox{$\ka$-rational} by Lemma \ref{DP1alpha}.

Otherwise, $G$ is a group listed in Table \ref{table1}. If $G$ is a group that is not listed in Proposition~\ref{DP1} then $X / G$ is $\ka$-rational by Lemmas \ref{DP1typeInorm}, \ref{DP1typeVI} and \ref{DP1typeXII}.

\end{proof}

\section{Elements of order $3$ in $\mathrm{W}(\mathrm{E}_8$)}

A del Pezzo surface $\XX$ of degree $1$ over an algebraically closed field $\kka$ is isomorphic to a~blowup of~$\Pro^2_{\kka}$ at eight points $p_1$, $\ldots$, $p_8$ in general position. Therefore the group~$\Pic(\XX)$ is generated by the proper transform $L$ of the class of a line on $\Pro^2_{\kka}$, and the classes \mbox{$E_1$, $\ldots$, $E_8$} of the exceptional divisors. The sublattice $K_X^{\perp}$ of classes $C$ in $\Pic(\XX)$ such that $C \cdot K_X = 0$, is generated by 
$$
L - E_1 - E_2 - E_3, \qquad E_1 - E_2, \qquad E_2 - E_3, \qquad \ldots, \qquad E_7 - E_8.
$$
This set of generators are simple roots for the root system of type $\mathrm{E}_8$. Therefore any group acting on the Picard lattice $\Pic(\XX)$ and preserving the intersection form is a subgroup of the Weyl group $\mathrm{W}(\mathrm{E}_8)$. Moreover, if a finite group $G$ acts on a del Pezzo surface~$X$ of degree $1$ then there is an embedding $G \hookrightarrow \mathrm{W}(\mathrm{E}_8)$ (see \cite[Lemma 6.2]{DI1}). For~convenience we will identify the group $G$ with its image in $\mathrm{W}(\mathrm{E}_8)$. Also we denote the~image of the group $\Gal\left(\kka / \ka\right)$ in $\mathrm{W}(\mathrm{E}_8)$ by $\Gamma$. The groups $G$ and $\Gamma$ commute.

Note that one can choose eight disjoint $(-1)$-curves on $\XX$ corresponding to \mbox{a blowup $\XX \rightarrow \Pro^2_{\kka}$} in many ways. Therefore the embeddings of $G$ and $\Gamma$ into $\mathrm{W}(\mathrm{E}_8)$ are defined up to conjugacy.

To show that a given del Pezzo surface $X$ is $\ka$-rational or not one should know properties of the group $\Gamma \subset \mathrm{W}(\mathrm{E}_8)$. In this section we study some properties of the group~$\mathrm{W}(\mathrm{E}_8)$ and~its subgroups. In particular, we study conjugacy classes of elements of order $3$ in~$\mathrm{W}(\mathrm{E}_8)$, since the groups of types $\mathrm{II}$, $\mathrm{III}$ and $\mathrm{IX}$ of Theorem \ref{DP1groupclass}, that are listed in Proposition \ref{DP1}, contain elements of order $3$.

We start from description of the set of $(-1)$-curves on a del Pezzo surface of degree $1$.

\begin{theorem}[{cf. \cite[Theorem IV.4.3]{Man74}}]
\label{DP1lines}
Let $\XX$ be a del Pezzo surface of degree $1$, that is a blow up of $\Pro^2_{\kka}$ at eight points $p_1$, $p_2$, $\ldots$, $p_8$ in general position. Then there are exactly $240$ $(-1)$-curves on $\XX$, that are the following:
\begin{itemize}
\item $8$ preimages of the points of the blowup;
\item $28$ proper transforms of the lines passing through two points of the blowup;
\item $56$ proper transforms of the conics passing through five points of the blowup;
\item $56$ proper transforms of the cubics passing through seven points of the blowup, one of which is a double point;
\item $56$ proper transforms of the quartics passing through the eight points of the blowup, three of which are double points;
\item $28$ proper transforms of the quintics passing through the eight points of the blowup, six of which are double points;
\item $8$ proper transforms of the sextics passing through the eight points of the blowup, seven of which are double points and one is a triple point.

\end{itemize}
\end{theorem}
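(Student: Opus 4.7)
The plan is to reduce the problem to enumerating integer solutions of a Diophantine system coming from intersection theory, and then to match each solution to one of the listed geometric families. Write an arbitrary class as $C \sim aL - \sum_{i=1}^{8} b_i E_i \in \Pic(\XX)$. If $C$ is a $(-1)$-curve then $C \cong \Pro^1_{\kka}$, so by adjunction $C^2 = -1$ and $K_{\XX} \cdot C = -1$. In the standard basis these two conditions read
$$a^2 - \sum_{i=1}^{8} b_i^2 = -1, \qquad \sum_{i=1}^{8} b_i = 3a - 1.$$

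Next I would bound $a$ from above. By Cauchy--Schwarz, $\left(\sum b_i\right)^2 \leqslant 8 \sum b_i^2$, and combined with the displayed equations this yields $(3a-1)^2 \leqslant 8(a^2+1)$, equivalently $(a-7)(a+1) \leqslant 0$, so $-1 \leqslant a \leqslant 7$. The value $a = 7$ would force equality in Cauchy--Schwarz and hence all $b_i = 5/2$, which is impossible; the case $a = 0$ produces exactly one $b_i = -1$, giving $C = E_i$ for one of the eight exceptional divisors. For $1 \leqslant a \leqslant 6$ an irreducible $C$ is the proper transform of a plane curve of degree $a$ with multiplicity $b_i$ at $p_i$, so $b_i = C \cdot E_i \geqslant 0$ and at most eight $b_i$ are nonzero. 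A direct case analysis of the resulting system yields: for $a = 1$, two $b_i$ equal to $1$, giving $\binom{8}{2} = 28$ solutions; for $a = 2$, five $b_i$ equal to $1$, giving $\binom{8}{5} = 56$; for $a = 3$, one $b_i = 2$ and six $b_i = 1$, giving $8 \cdot \binom{7}{6} = 56$; for $a = 4$, three $b_i = 2$ and five $b_i = 1$, giving $\binom{8}{3} = 56$; for $a = 5$, six $b_i = 2$ and two $b_i = 1$, giving $\binom{8}{6} = 28$; and for $a = 6$, one $b_i = 3$ and seven $b_i = 2$, giving $\binom{8}{1} = 8$. The total is $8 + 28 + 56 + 56 + 56 + 28 + 8 = 240$, matching the geometric count in the statement.

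Finally I would check that each numerical class is represented by a unique irreducible smooth rational curve with the geometric description asserted. For $p_1, \ldots, p_8$ in general position the expected dimension of the linear system of plane curves of degree $a$ with the prescribed multiplicities at the $p_i$ is zero, and general position rules out any unexpected base-point or incidence degenerations, so the system consists of a single irreducible curve whose proper transform is smooth of genus $0$ and realises the numerical class. The main obstacle is precisely this genericity check: one needs to confirm that for the chosen general position the assigned multiplicity conditions are independent and that the unique curve they cut out has no further singularities. Once this is in place, the degree of the plane image together with its multiplicity profile immediately identifies the class with one of the seven families listed, and the tally of $240$ follows.
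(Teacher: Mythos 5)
The paper does not prove this statement at all: it is quoted as a classical fact with a pointer to \cite[Theorem IV.4.3]{Man74}, so there is no in-paper argument to compare against. Your proposal is the standard enumeration that underlies Manin's theorem, and it is essentially correct: the two equations $a^2 - \sum b_i^2 = -1$ and $\sum b_i = 3a-1$, the Cauchy--Schwarz bound $-1 \leqslant a \leqslant 7$, and the case-by-case solution profiles and counts all check out (you should dispose of $a=-1$ explicitly by the same equality argument you use for $a=7$, since it would force $b_i = -1/2$). The one place where your route is heavier than necessary is the final existence-and-uniqueness step. Rather than verifying genericity of the eight points against each multiplicity profile in the plane, it is cleaner to argue intrinsically on $\XX$: for a class $D$ with $D^2 = D\cdot K_{\XX} = -1$, Riemann--Roch gives $\chi(D)=1$ and $h^2(D)=h^0(K_{\XX}-D)=0$ because $(K_{\XX}-D)\cdot(-K_{\XX})=-2<0$, so $D$ is effective; since $-K_{\XX}$ is ample and $D\cdot(-K_{\XX})=1$, any effective representative is irreducible and reduced, and $D^2=-1$ forces $h^0(D)=1$, so the representative is unique. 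This replaces the genericity check you flag as the main obstacle and buys the conclusion uniformly for all $240$ classes at once; your plane-model description then follows by pushing each class forward to $\Pro^2_{\kka}$.
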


We use the following notation.

\begin{notation}
\label{DP1linesnot}
Let $f: \XX \rightarrow \Pro^2_{\kka}$ be the blowup of $\Pro^2_{\kka}$ at eight points $p_1$, $\ldots$, $p_8$ in general position. Put $E_i = f^{-1}(p_i)$ \mbox{and $L = f^*l$}, where~$l$ is the class of a line on $\Pro^2_{\kka}$. One has
$$
-K_{\XX} \sim 3L - \sum \limits_{i=1}^8 E_i.
$$
In this notation $(-1)$-curves on $\XX$, that are the proper transforms of lines, conics and cubics, have classes
$$
L_{ij} \sim L - E_i - E_j, \qquad Q_{ijk} \sim 2L + E_i + E_j + E_k - \sum \limits_{l = 1}^8 E_l, \qquad C_{i-j} \sim 3L - E_i + E_j - \sum \limits_{k = 1}^8 E_k
$$
respectively. For $(-1)$-curves on $\XX$, that are the proper transforms of quadrics, quintics and sextics, we write $\beta Q_{ijk}$, $\beta L_{ij}$ and $\beta E_i$ respectively.
\end{notation}

Note that there is a (non-normal) subgroup $\SG_8 \subset \mathrm{W}(\mathrm{E}_8)$ that permutes subscripts of~$(-1)$-curves of certain types.

Now we want to study some properties of conjugacy classes of elements of order $3$ in~$\mathrm{W}(\mathrm{E}_8)$. The classification of conjugacy classes in the Weyl group $\mathrm{W}(\mathrm{E}_8)$ was obtained by Frame (see \cite{Fr67}), but for convenience we use the notation of \cite[Table 11]{Car72}.

There are four conjugacy classes of elements of order $3$ in~$\mathrm{W}(\mathrm{E}_8)$ that have Carter graphs~$A_2$, $A_2^2$, $A_2^3$ and $A_2^4$. For an element $g$ with a given Carter graph one can easily compute the invariant Picard number $\rho(\XX)^{\langle g \rangle} = \operatorname{rk}\Pic(\XX)^{\langle g \rangle}$ (see \cite[Subsection 6.1]{DI1}). These numbers equal to $7$, $5$, $3$ and $1$ for the elements of types $A_2$, $A_2^2$, $A_2^3$ and $A_2^4$ respectively.

\begin{lemma}
\label{A2A22}
The elements of types $A_2$ and $A_2^2$ are conjugate to $(123)$ and $(123)(456)$ respectively in~$\SG_8 \subset \mathrm{W}(\mathrm{E}_8)$.
\end{lemma}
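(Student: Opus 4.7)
The plan is to invoke the invariant-rank criterion recorded just before the lemma: the four conjugacy classes of order $3$ in $\mathrm{W}(\mathrm{E}_8)$, of Carter types $A_2$, $A_2^2$, $A_2^3$, $A_2^4$, have pairwise distinct invariant Picard numbers $7$, $5$, $3$, $1$ respectively. Since these four values are pairwise distinct, the conjugacy class of an order-$3$ element $g \in \mathrm{W}(\mathrm{E}_8)$ is completely determined by $\rho(\XX)^{\langle g \rangle}$. It therefore suffices to check that $(123)$ and $(123)(456)$, viewed as elements of $\SG_8 \subset \mathrm{W}(\mathrm{E}_8)$, have invariant Picard numbers $7$ and $5$ respectively.

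For this step I will use that $\SG_8 \subset \mathrm{W}(\mathrm{E}_8)$ acts on the basis $\{L, E_1, \ldots, E_8\}$ of $\Pic(\XX)$ by fixing $L$ and permuting the exceptional classes $E_i$ in the standard way, as made explicit in Notation \ref{DP1linesnot}. Since the action is by basis permutation, $\Pic(\XX)^{\langle g \rangle}$ is freely generated (over $\Z$, after tensoring with $\Q$ if one prefers) by the sums of basis elements over the orbits of $\langle g \rangle$, so its rank equals the number of $\langle g \rangle$-orbits on $\{L, E_1, \ldots, E_8\}$. For $g = (123)$ the orbits are $\{L\}$, $\{E_1, E_2, E_3\}$, $\{E_4\}, \{E_5\}, \{E_6\}, \{E_7\}, \{E_8\}$, a total of $7$ orbits, so $\rho(\XX)^{\langle g \rangle} = 7$ and $g$ has Carter type $A_2$. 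For $g = (123)(456)$ the orbits are $\{L\}$, $\{E_1, E_2, E_3\}$, $\{E_4, E_5, E_6\}$, $\{E_7\}$, $\{E_8\}$, a total of $5$ orbits, so $\rho(\XX)^{\langle g \rangle} = 5$ and $g$ has Carter type $A_2^2$.

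There is no real obstacle in this argument: once the invariant-rank criterion is granted, the verification reduces to a direct orbit count. The only subsidiary point worth spelling out explicitly is the identification of the $\SG_8 \subset \mathrm{W}(\mathrm{E}_8)$ appearing in the lemma with the permutation action on $\{E_1, \ldots, E_8\}$ fixing $L$; this is precisely the subgroup referenced immediately after Notation \ref{DP1linesnot}, so no ambiguity arises.
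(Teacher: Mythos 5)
Your proposal is correct and follows essentially the same route as the paper: the paper's proof also computes $\Pic(\XX)^{\langle g \rangle}$ for $(123)$ and $(123)(456)$ via the orbit sums $L$, $E_1+E_2+E_3$, etc., obtains ranks $7$ and $5$, and concludes by the fact stated just before the lemma that the four order-$3$ classes are distinguished by their invariant Picard numbers. You merely make the orbit-counting and the invariant-rank criterion more explicit than the paper does.
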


\begin{proof}
For an element $g = (123) \in \SG_8$ the group $\Pic(\XX)^{\langle g \rangle}$ is generated by $L$, $E_1 + E_2 + E_3$, $E_4$, $\ldots$, $E_8$, and $\rho(\XX)^{\langle g \rangle} = 7$.

For an element $g = (123)(456) \in \SG_8$ the group $\Pic(\XX)^{\langle g \rangle}$ is generated by $L$, $E_1 + E_2 + E_3$, $E_4 + E_5 + E_6$, $E_7$, $E_8$, and $\rho(\XX)^{\langle g \rangle} = 5$.
\end{proof}

From Section $3$ one can see that three types of elements of order $3$ can act on a del Pezzo surface of degree $1$: elements of types $\mathrm{II}$ and $\mathrm{III}$ (see Table \ref{table1}), and an element~$\alpha$ defined at the beginning of Section $3$. For each of those types we want to find the corresponding Carter graph. For algebraically closed field the classification of elements of order $3$ in~$\mathrm{Cr}_2(\kka)$ is obtained in \cite{dF04}. The following lemma easily follows from the results of this paper.

\begin{lemma}
\label{geom3}
Elements of types $\mathrm{II}$ and $\mathrm{III}$, and an element $\alpha$ have Carter graphs $A_2^3$, $A_2^2$ and $A_2^4$ respectively.
\end{lemma}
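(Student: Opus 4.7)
My plan is to identify the Carter graph of each of the three order-$3$ automorphisms --- $\alpha$ and the generators $g_{\mathrm{II}}$, $g_{\mathrm{III}}$ of groups of types $\mathrm{II}$ and $\mathrm{III}$ --- by computing the invariant Picard number $\rho(\XX)^{\langle g \rangle}$, which by the correspondence recalled just before the lemma distinguishes $A_2, A_2^2, A_2^3, A_2^4$ via the values $7, 5, 3, 1$ respectively. I will do this by applying the topological Lefschetz fixed point formula to $\XX^g$ and then computing the fixed locus explicitly in the weighted projective coordinates on $\Pro_{\kka}(1{:}1{:}2{:}3)$.

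For the algebraic side, for an order-$3$ automorphism $g$ the representation on $H^2(\XX, \Q) \cong \Pic(\XX) \otimes \Q$ is rational of dimension $9$, with trivial isotypic component of dimension $\rho^{\langle g \rangle}$ and complement splitting into two-dimensional subrepresentations each with eigenvalues $\omega, \omega^2$, hence of trace $-1$. This gives
$$
\operatorname{tr}\bigl(g \mid H^2(\XX, \Q)\bigr) \;=\; \rho^{\langle g \rangle} - \frac{9 - \rho^{\langle g \rangle}}{2},
$$
and the Lefschetz fixed point formula, using $H^1(\XX, \Q) = H^3(\XX, \Q) = 0$ and $H^0(\XX, \Q) = H^4(\XX, \Q) = \Q$, yields
$$
\chi\bigl(\XX^g\bigr) \;=\; 2 + \operatorname{tr}\bigl(g \mid H^2(\XX, \Q)\bigr) \;=\; \frac{3 \rho^{\langle g \rangle} - 5}{2}.
$$
Thus the graphs $A_2, A_2^2, A_2^3, A_2^4$ correspond respectively to $\chi(\XX^g) = 8, 5, 2, -1$, and identifying any one of these pinpoints the graph.

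For the geometric side, the key point is that a point is $g$-fixed in $\Pro_{\kka}(1{:}1{:}2{:}3)$ exactly when some scaling $\lambda$ intertwines the given formula for $g$ with the weighted relation $(x{:}y{:}z{:}t) \sim (\lambda x : \lambda y : \lambda^2 z : \lambda^3 t)$; different choices of $\lambda$ then cut out different components of $\XX^g$. Working this out, for $\alpha = (\omega x : \omega y : z : t)$ the only option is $\lambda = \omega$, forcing $z = 0$; so $\XX^\alpha$ is the smooth curve $\{z=0\}\cap\XX$ cut out by $L_6(x,y) + t^2 = 0$, a double cover of $\Pro^1_{\kka}$ branched in six points (genus $2$, $\chi = -2$), together with the base point $p$, giving $\chi = -1$ and Carter graph $A_2^4$. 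For $g_{\mathrm{II}} = (\omega x : y : z : t)$ the locus $y \neq 0$ forces $\lambda = 1$ and $x = 0$, producing the smooth elliptic curve $\{x = 0\}\cap\XX$ (a smooth member of $|-K_X|$, hence genus $1$ by adjunction, $\chi = 0$), while the locus $y = 0$ forces $\lambda = \omega$ and $z = 0$, leaving two isolated points $(1{:}0{:}0{:}\pm \ii)$ on $x^6 + t^2 = 0$; thus $\chi = 2$ and the graph is $A_2^3$. For $g_{\mathrm{III}} = (\omega x : \omega^2 y : z : t)$ the two nontrivial eigenvalues are distinct, so neither $\{x=0\}$ nor $\{y=0\}$ is pointwise fixed, and a direct scaling analysis gives exactly five isolated fixed points --- $p$ together with $(0{:}1{:}0{:}\pm \ii)$ and $(1{:}0{:}0{:}\pm \ii)$ --- so $\chi = 5$ and the graph is $A_2^2$.

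The main subtlety is the weighted projective scaling when enumerating fixed points: one must allow $\lambda$ to depend on which of $x,y$ is nonzero, since the locus where a weight-$1$ coordinate vanishes produces the isolated fixed points while the complementary locus produces the fixed curves. Once the scaling is handled uniformly, the Euler characteristic computations come down to adjunction for the fixed curves (both are plane-like curves in the appropriate weighted slice and smoothness is automatic from smoothness of $\XX$) and direct intersection with coordinate loci for the isolated points; matching against the three values $-1, 2, 5$ above then assigns the Carter graphs.
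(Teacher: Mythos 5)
Your proof is correct, but it takes a genuinely different route from the paper. The paper invokes de Fernex's classification of order-$3$ elements of the plane Cremona group (\cite{dF04}) together with $\langle g\rangle$-equivariant MMP: the genus of the fixed curve determines the $\langle g\rangle$-minimal model $\overline{Y}$ (a cubic surface for type $\mathrm{II}$, $\Pro^2_{\kka}$ or $\Pro^1_{\kka}\times\Pro^1_{\kka}$ for type $\mathrm{III}$, $\XX$ itself for $\alpha$), and hence $\rho(\XX)^{\langle g\rangle}$; for type $\mathrm{III}$ it then needs a separate argument to exclude the graph $A_2$ (an order-$3$ linear automorphism of $\Pro^2_{\kka}$ cannot have five fixed points in general position). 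You instead compute $\rho(\XX)^{\langle g\rangle}$ directly via the topological Lefschetz fixed point formula: your trace identity $\operatorname{tr}(g\mid H^2) = \rho^{\langle g\rangle} - \tfrac{9-\rho^{\langle g\rangle}}{2}$ and the resulting dictionary $\chi(\XX^g) = \tfrac{3\rho^{\langle g\rangle}-5}{2} \in \{8,5,2,-1\}$ are correct, and your fixed-locus computations (genus-$2$ curve plus $p$ for $\alpha$, elliptic anticanonical curve plus two points for type $\mathrm{II}$, five isolated points for type $\mathrm{III}$) agree with the fixed loci the paper records in Lemmas \ref{DP1alpha}, \ref{DP1typeII} and \ref{DP1typeIII}. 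Your approach is more self-contained (no external classification needed) and settles the $A_2$ versus $A_2^2$ ambiguity for type $\mathrm{III}$ automatically since $\chi=5\neq 8$; the paper's approach has the side benefit of exhibiting the equivariant minimal models, which it reuses later (e.g.\ in Corollary \ref{geom3nonrat} and Lemma \ref{Davidinv}). Two small points worth making explicit: smoothness of the fixed curves follows not from smoothness of the hyperplane sections per se but from the general fact that the fixed locus of a finite-order automorphism in characteristic zero is smooth, and connectedness of the anticanonical fixed curve in type $\mathrm{II}$ (needed to get $\chi=0$ from $p_a=1$) follows since every member of $|-K_X|$ passes through the base point $p$.
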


\begin{proof}

Let $g$ be an element of order $3$ acting on a del Pezzo surface $\XX$ of degree $1$. Applying $\langle g \rangle$-equivariant MMP one can get a $\langle g \rangle$-morphism $\XX \rightarrow \overline{Y}$, where $\overline{Y}$ is a $\langle g \rangle$-minimal del Pezzo surface.

By \cite[Theorem A]{dF04} there are four possibilities: $\overline{Y} \cong \Pro^2_{\kka}$, $\overline{Y} \cong \Pro^1_{\kka} \times \Pro^1_{\kka}$, $\overline{Y}$ is a cubic surface and $g$ fixes a hyperplane section on $\overline{Y}$, or $\overline{Y} = \XX$ is a del Pezzo surface of degree~$1$ and $g$ acts as $\alpha$. Moreover, degree of $\overline{Y}$ depends on the genus of the curve of $g$-fixed points on $\XX$ (see \cite[Theorem F]{dF04}).

If $g$ has type $\mathrm{II}$ then $g$-fixed curve is an elliptic curve, and $\overline{Y}$ is a cubic surface. Thus $\XX \rightarrow \overline{Y}$ is a contraction of two $(-1)$-curves, $\rho(\XX)^{\langle g \rangle} = 3$, and $g$ has Carter graph $A_2^3$.

If $g$ has type $\mathrm{III}$ then $g$ has only isolated fixed points, and either $\overline{Y} \cong \Pro^2_{\kka}$, or $\overline{Y} \cong \Pro^1_{\kka} \times \Pro^1_{\kka}$. Thus $\XX \rightarrow \overline{Y}$ is a contraction of seven or eight $(-1)$-curves, $\rho(\XX)^{\langle g \rangle} \geqslant 4$, and $g$ has Carter graph $A_2$ or $A_2^2$. If $g$ has Carter graph $A_2$ then it is conjugate to $(123)$ in $\mathrm{W}(\mathrm{E}_8)$ by Lemma~\ref{A2A22}. Therefore $\XX$ can be realised as a blowup of a $\langle g \rangle$-orbit of cardinality $3$ and five $g$-fixed points on $\Pro^2_{\kka}$ in general position. But this is impossible, since an element of~$\mathrm{PGL}_2(\kka)$ cannot have five fixed points in general position. Hence $g$ has Carter graph~$A_2^2$. 

If $g$ is $\alpha$ then $\XX$ is $\langle g \rangle$-minimal, $\rho(\XX)^{\langle g \rangle} = 1$, and $g$ has Carter graph $A_2^4$.

\end{proof}

\begin{corollary}
\label{geom3nonrat}
Let $X$ be a del Pezzo surface of degree $1$. If the group $\Gamma \subset \mathrm{W}(\mathrm{E}_8)$ contains an element of type $A_2^3$ or $A_2^4$ then $X$ is not $\ka$-rational.
\end{corollary}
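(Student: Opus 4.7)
The plan is to reduce to a field extension over which the Galois action is generated by $g$ alone, and then invoke Theorem~\ref{ratcrit} applied to a minimal model identified by Lemma~\ref{geom3}. Assume for contradiction that $X$ is $\ka$-rational.

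I would let $F\subset\kka$ be the subfield fixed by the preimage in $\Gal(\kka/\ka)$ of the cyclic subgroup $\langle g\rangle\subset\Gamma$, so that $\Gal(\kka/F)$ acts on $\Pic(\XX)$ through $\langle g\rangle$, and $X_F=X\otimes_\ka F$ is $F$-rational. I would then run the $F$-equivariant MMP on $X_F$ to obtain an $F$-minimal model $Y_F$; after base change to $\kka$ this coincides with the $\langle g\rangle$-equivariant MMP on $\XX$, since $F$-rational orbits of disjoint $(-1)$-curves are exactly the $\Gal(\kka/F)$-invariant, hence $\langle g\rangle$-invariant, ones.

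For $g$ of Carter type $A_2^4$, one has $\rho(\XX)^{\langle g\rangle}=1$, so $X_F$ is already $F$-minimal and $K_{Y_F}^2=K_{X_F}^2=1$. For $g$ of Carter type $A_2^3$, Lemma~\ref{geom3} (via de~Fernex's classification) identifies the $\langle g\rangle$-minimal model of $\XX$ as a cubic surface, so $\overline{Y}=Y_F\otimes_F\kka$ is a cubic and $K_{Y_F}^2=3$. In both cases $K_{Y_F}^2<5$, so Theorem~\ref{ratcrit} implies $Y_F$ is not $F$-rational, contradicting the $F$-rationality of $X_F\approx Y_F$.

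The main subtle point is the identification of the arithmetic $F$-equivariant MMP with the geometric $\langle g\rangle$-equivariant MMP studied in Lemma~\ref{geom3}, so that its conclusion about the specific minimal model (a cubic for type $A_2^3$) transfers to the arithmetic setting over $F$. This depends on the fact that, by construction of $F$, the Galois action of $\Gal(\kka/F)$ on $\Pic(\XX)$ factors through $\langle g\rangle$, so that the orbit structure on $(-1)$-curves is exactly the one analysed in Section~3 and in Lemma~\ref{geom3}.
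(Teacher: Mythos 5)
Your argument is correct and follows essentially the same route as the paper: bound $K^2$ of the ($F$- or $\ka$-)minimal model by $1$ (type $A_2^4$) resp.\ $3$ (type $A_2^3$) using the $\langle g\rangle$-invariance of the contracted $(-1)$-classes together with Lemma~\ref{geom3}, then apply Theorem~\ref{ratcrit}. The only differences are cosmetic: the paper avoids the base change to $F$ by observing that $g\in\Gamma$ already makes the contraction $\XX\to\overline{Y}$ $\langle g\rangle$-equivariant, and it makes your ``identification'' step explicit by conjugating $g$ to a genuine type~$\mathrm{II}$ automorphism $g'$ on an auxiliary del Pezzo surface $\XX'$, where the elliptic curve of fixed points forces a further $g'$-equivariant contraction onto a cubic surface.
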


\begin{proof}
If $\Gamma$ contains an element $g$ of type $A_2^4$ then $\rho(X) = \rho(\XX)^{\Gamma} = \rho(\XX)^{\langle g \rangle} = 1$, and $X$ is minimal. Therefore $X$ is not $\ka$-rational by Theorem \ref{ratcrit}.

If $\Gamma$ contains an element $g$ of type $A_2^3$ then $\Pic(X) = \Pic(\XX)^{\Gamma} \subset \Pic(\XX)^{\langle g \rangle}$. Let $Y$ be a~minimal model of $X$. The action of the element $g$ on $\Pic(\XX)$ is conjugate to the action of an element of type $\mathrm{II}$. Therefore for a del Pezzo surface $\XX'$ with a geometric action of~an element $g'$ of type $\mathrm{II}$ there is a $g'$-equivariant contraction $\XX' \rightarrow \overline{Y}'$, and $K_{\overline{Y}'}^2 = K_Y^2$. The element $g'$ must have an elliptic curve of fixed points on $\overline{Y}'$, since it has an elliptic curve of fixed points on $\XX'$. Thus as in the proof of Lemma \ref{geom3} there is a $g'$-equivariant map $\overline{Y}' \rightarrow \overline{Z}'$, and $\overline{Z}'$ is a cubic surface. One has $K_Y^2 = K_{\overline{Y}'}^2 \leqslant K_{\overline{Z}'}^2 = 3$. Therefore $X \approx Y$ is not $\ka$-rational by Theorem \ref{ratcrit}.

\end{proof}

To apply the result of Corollary \ref{geom3nonrat} we need the following definition.

\begin{definition}
\label{Daviddef}
Six $(-1)$-curves $H_1$, $\ldots$, $H_6$ on a del Pezzo surface of degree $1$ form a~\textit{\davidsstar-configuration} if
$$
H_i \cdot H_{i + 1} = 0, \qquad H_i \cdot H_{i + 2} = 2, \qquad H_i \cdot H_{i + 3} = 3
$$
(Hereinafter in this section all subscripts are considered modulo $6$).

\begin{figure}
\caption{\davidsstar-configuration}
\includegraphics[angle=90]{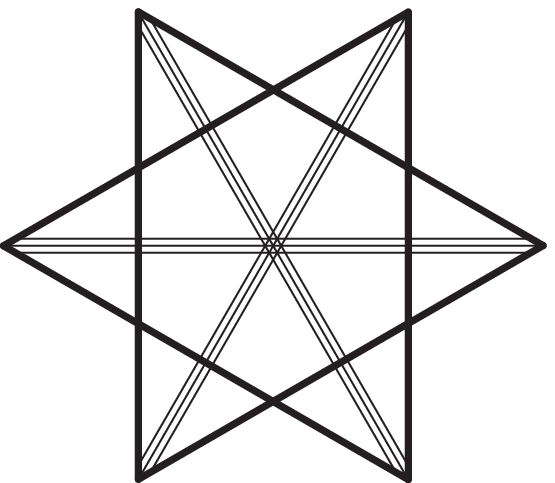}
\end{figure}
\end{definition}

A $\davidsstar$-configuration is depicted in Figure 1: the vertices of a hexagon denote $(-1)$-curves, the bold lines and the triple lines denote intersection with multiplicity $2$ and $3$ respectively.

\begin{example}
\label{Davidexample}
Assume that an element $\alpha$ acts on a del Pezzo surface $X$ of degree $1$. Then any $\langle \alpha, \beta \rangle$-orbit of any $(-1)$-curve is a \davidsstar-configuration.
\end{example}

\begin{remark}
\label{Davidproperties}
Let $(-1)$-curves $H_1$, $\ldots$, $H_6$ form a \davidsstar-configuration. Then
$$
H_i + H_{i+3} \sim -2K_X, \qquad H_i + H_{i+2} + H_{i+4} \sim -3K_X, \qquad \sum \limits_{i = 1}^6 H_i \sim -6K_X.
$$
In particular, $H_{i + 3} = \beta H_i$.

Moreover, for any two disjoint $(-1)$-curves $A$ and $B$ there is a unique \davidsstar-configuration containing $A$ and $B$. This \davidsstar-configuration consists of $(-1)$-curves with classes $A$, $B$, $-K_X - A + B$, $-2K_X -A $, $-2K_X - B$, $-K_X + A - B$.

To show that $H_i + H_{i+3} \sim -2K_X$, one can consider the $(-1)$-curves $\beta H_i$ and $H_{i + 3}$:
$$
\beta H_i \cdot H_{i + 3} = (-2K_X - H_i) \cdot H_{i + 3} = -1,
$$
\noindent therefore the $(-1)$-curve $\beta H_i$ is $H_{i + 3}$.

To show that $H_i + H_{i+2} + H_{i+4} \sim -3K_X$ one can consider the $(-1)$-curve $\beta H_i$:
$$
\beta H_i \cdot H_{i+2} = (-2K_X - H_i) \cdot H_{i + 2} = 0, \qquad \beta H_i \cdot H_{i+4} = (-2K_X - H_i) \cdot H_{i + 4} = 0.
$$
\noindent Therefore we can contract the $(-1)$-curve $\beta H_i$, and get a del Pezzo surface $X_2$ of degree~$2$. The images $\widetilde{H}_{i + 2}$ and $\widetilde{H}_{i + 4}$ of $H_{i+2}$ and $H_{i+4}$ respectively are permuted by the Geiser involution on $X_2$. Therefore $\widetilde{H}_{i + 2} + \widetilde{H}_{i + 4} \sim -K_{X_2}$. Thus one has
$$
H_{i + 2} + H_{i + 4} \sim -K_X + \beta H_i \sim -3K_X - H_i.
$$
\end{remark}

\begin{lemma}
\label{Davidintersection}
Let $A$ be a $(-1)$-curve and $(-1)$-curves $H_1$, $\ldots$, $H_6$ form \mbox{a \davidsstar-configuration}. Then either $A \cdot H_i = 1$ for any $i$, or there exists $k$ such that
$$
A \cdot H_k = A \cdot H_{k + 1} = 0, \qquad A \cdot H_{k + 2} = A \cdot H_{k + 5} = 1, \qquad A \cdot H_{k + 3} = A \cdot H_{k + 4} = 2.
$$
\end{lemma}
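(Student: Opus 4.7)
The plan is to reduce everything to arithmetic on the numbers $a_i := A \cdot H_i$ using the two linear equivalences provided by Remark \ref{Davidproperties}. We may assume $A \ne H_i$ for every $i$, so that each $a_i$ is a non-negative integer (otherwise $A = H_j$ forces $a_j = -1$ and $a_{j\pm 2} = 2$, $a_{j\pm 1}=0$, $a_{j+3}=3$, which is excluded by the hypothesis that $A$ is a $(-1)$-curve different from the $H_i$; we will point this out but essentially ignore it).

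The two key identities I would write down first are
\[
a_i + a_{i+3} = A \cdot (H_i + H_{i+3}) = A \cdot (-2K_X) = 2,
\]
and
\[
a_i + a_{i+2} + a_{i+4} = A \cdot (H_i + H_{i+2} + H_{i+4}) = A \cdot (-3K_X) = 3,
\]
valid for every $i$ by Remark \ref{Davidproperties} together with $A \cdot (-K_X) = 1$. From the first identity each $a_i \in \{0,1,2\}$, and the three pairs $\{a_i,a_{i+3}\}$ each sum to $2$, while the two ``triangles'' $\{a_1,a_3,a_5\}$ and $\{a_2,a_4,a_6\}$ each sum to $3$.

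The case analysis is then short. If all $a_i = 1$ we are in the first alternative. Otherwise some $a_i$ equals $0$ or $2$; by the first identity and the cyclic symmetry of the statement we may relabel so that $a_1 = 0$, which forces $a_4 = 2$. The constraint $a_3 + a_5 = 3$ with $a_3, a_5 \in \{0,1,2\}$ leaves only $(a_3, a_5) \in \{(1,2),(2,1)\}$. In the subcase $(a_3,a_5)=(1,2)$ the identities give $(a_2,a_6) = (0,1)$, producing the sequence $(a_1,\dots,a_6) = (0,0,1,2,2,1)$, which is the desired pattern with $k=1$. In the subcase $(a_3,a_5)=(2,1)$ we get $(a_2,a_6) = (1,0)$ and the sequence $(0,1,2,2,1,0)$, which is the pattern with $k=6$. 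Either way the second alternative of the lemma holds.

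There is essentially no obstacle: the hypotheses on the \davidsstar-configuration have already been translated into the two linear equivalences in Remark \ref{Davidproperties}, so the argument is purely a bounded integer enumeration. The only thing worth flagging carefully in the write-up is the tacit assumption $A \ne H_i$; if one allows $A = H_j$ then $a_j = -1$ and the pattern $(-1,0,2,3,2,0)$ appears, which falls outside both listed alternatives, so the statement implicitly excludes this degenerate case.
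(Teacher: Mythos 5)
Your proof is correct and takes essentially the same route as the paper: both intersect $A$ with the relations $H_i + H_{i+3} \sim -2K_X$ and $H_i + H_{i+2} + H_{i+4} \sim -3K_X$ from Remark~\ref{Davidproperties} and conclude by a short integer case analysis. Your explicit remark that the statement tacitly assumes $A \notin \{H_1,\ldots,H_6\}$ is a fair observation; the paper leaves this implicit in its unproved opening assertion that $A \cdot H_i \in \{0,1,2\}$.
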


\begin{proof}
Note that $A \cdot H_i$ is $0$, $1$ or $2$ for any $i$. One has
$$
A \cdot (H_i + H_{i+2} + H_{i+4}) = -A \cdot 3K_X = 3.
$$
\noindent Therefore either $A \cdot H_i = A \cdot H_{i + 2} = A \cdot H_{i + 4} = 1$, or there exists $k$ such that $A \cdot H_k = 0$, $A \cdot H_{k + 2} = 1$, $A \cdot H_{k + 4} = 2$. The assertion of Lemma \ref{Davidintersection} follows from the following equality
$$
A \cdot H_{i + 3} = A \cdot \beta H_i = A (-2K_X - H_i) = 2 - A \cdot H_i.
$$
\end{proof}

Applying Lemma \ref{Davidintersection} and Remark \ref{Davidproperties} one can check that for two $\davidsstar$-configurations one of the following possibilities holds.

\begin{cordef}
\label{2Daviddef}
Let $(-1)$-curves $A_1$, $\ldots$, $A_6$ and $B_1$, $\ldots$, $B_6$ form \mbox{two \davidsstar-configurations} $\davidsstar_A$ and $\davidsstar_B$. Then up to change of subscripts one of the following possibilities holds.
\begin{enumerate}
\item One has $A_i \cdot B_j = 1$ for any $i$, $j$. In this case we say that $\davidsstar_A$ and $\davidsstar_B$ are \textit{asynchronized} (see Figure 2).

\item One has
$$
A_i \cdot B_i = A_i \cdot B_{i+3} = 1, \qquad A_i \cdot B_{i+1} = A_i \cdot B_{i+2} = 2, \qquad A_i \cdot B_{i-1} = A_i \cdot B_{i-2} = 0.
$$
In this case we say that $\davidsstar_A$ and $\davidsstar_B$ are \textit{synchronized} (see Figure 3).

\item One has
$$
A_2 \cdot B_2 = A_2 \cdot B_3 = A_3 \cdot B_2 = A_3 \cdot B_3 = A_5 \cdot B_5 = A_5 \cdot B_6 = A_6 \cdot B_5 = A_6 \cdot B_6  = 2,
$$
$$
A_2 \cdot B_5 = A_2 \cdot B_6 = A_3 \cdot B_5 = A_3 \cdot B_6 = A_5 \cdot B_2 = A_5 \cdot B_3 = A_6 \cdot B_2 = A_6 \cdot B_3  = 0,
$$
$$
A_1 \cdot B_i = A_4 \cdot B_i = A_i \cdot B_1 = A_i \cdot B_4 = 1,
$$
for any $i$. In this case we say that $\davidsstar_A$ and $\davidsstar_B$ are \textit{abnormal} (see Figure 4).

\end{enumerate}

\begin{figure}
\caption{Asynchronized \davidsstar-configurations}
\includegraphics{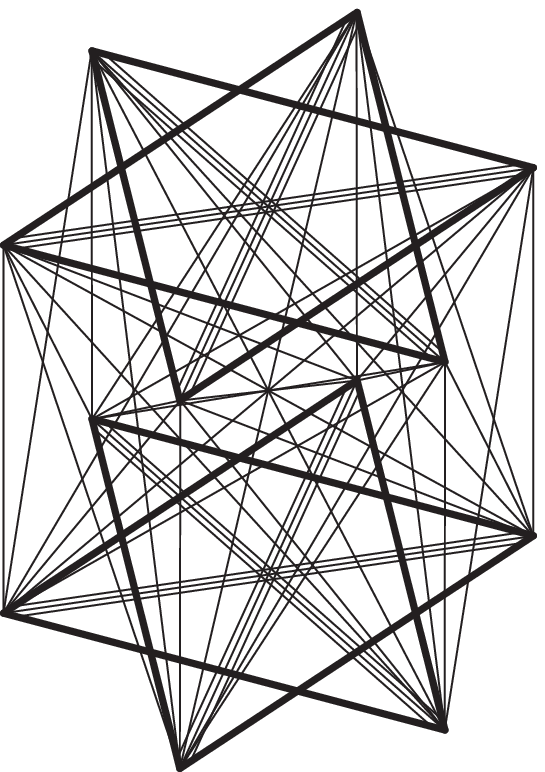}
\end{figure}

\begin{figure}
\caption{Synchronized \davidsstar-configurations}
\includegraphics{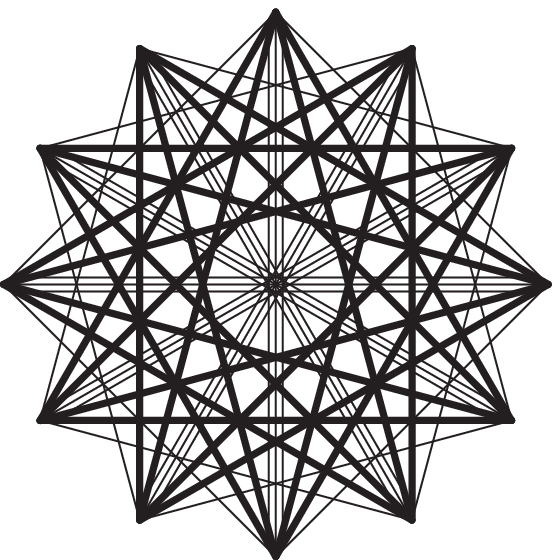}
\end{figure}

\begin{figure}
\caption{Abnormal \davidsstar-configurations}
\includegraphics{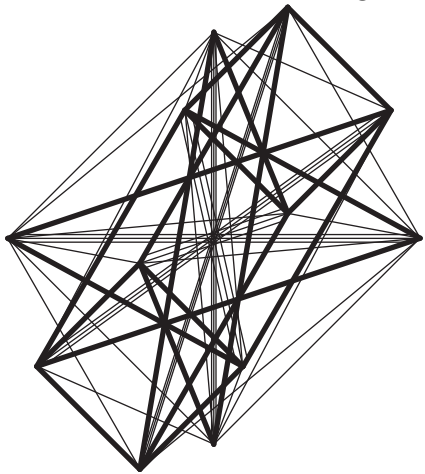}
\end{figure}

\end{cordef}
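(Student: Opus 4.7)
The plan is to apply Lemma \ref{Davidintersection} to each $(-1)$-curve $B_j$ of $\davidsstar_B$ against the configuration $\davidsstar_A$. Each $B_j$ is then either \emph{balanced} (meaning $A_i \cdot B_j = 1$ for every $i$) or \emph{shifted of type $k(j) \in \Z/6\Z$}, meaning that the sequence $(A_1 \cdot B_j,\ldots,A_6 \cdot B_j)$ is the cyclic word $(0,0,1,2,2,1)$ starting at position $k(j)$. Combined with the identity $B_{j+3} = \beta B_j = -2K_X - B_j$ from Remark \ref{Davidproperties}, which gives $A_i \cdot B_{j+3} = 2 - A_i \cdot B_j$, we conclude that $B_j$ is balanced if and only if $B_{j+3}$ is, and if $B_j$ is shifted of type $k$ then $B_{j+3}$ is shifted of type $k+3$. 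Hence balanced curves come in antipodal pairs and their total number is $0$, $2$, $4$, or $6$.

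Next I exploit the linear equivalences $B_1 + B_3 + B_5 \sim -3K_X$ and $B_2 + B_4 + B_6 \sim -3K_X$ from Remark \ref{Davidproperties}. Intersecting with $A_i$ gives
$$
A_i \cdot B_1 + A_i \cdot B_3 + A_i \cdot B_5 = 3, \qquad A_i \cdot B_2 + A_i \cdot B_4 + A_i \cdot B_6 = 3
$$
for every $i$. If two antipodal pairs, say $\{B_1, B_4\}$ and $\{B_2, B_5\}$, are balanced, the first identity forces $A_i \cdot B_3 = 1$ for every $i$, so $B_3$ (and hence $B_6$) is balanced too; thus exactly four balanced curves is impossible. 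This leaves three genuine cases: all six, exactly two, or none of the $B_j$ are balanced. The case of six yields the all-ones intersection matrix, which is the asynchronized configuration.

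In the two-balanced case, cyclically relabel so that $B_2, B_5$ are balanced and $B_1$ is shifted of type $1$; the sum identity then forces $A_i \cdot B_3 = 2 - A_i \cdot B_1$, so $B_3$ is shifted of type $4$, and consequently $B_4$ is of type $4$ and $B_6$ of type $1$. Matching the resulting intersection matrix against the formulas in the statement, after a cyclic shift of the $A$-indices, recovers precisely the abnormal configuration. In the no-balanced case the same sum identity applied with $B_1$ of type $1$ admits, after enumerating the six possible shift types for $B_3$ and requiring that $A_i \cdot B_5 = 3 - A_i \cdot B_1 - A_i \cdot B_3$ remain in $\{0,1,2\}$, only the pairings $(B_3, B_5)$ of shift types $(3,5)$ or $(5,3)$; antipodal complementarity then determines $B_2, B_4, B_6$, and the resulting circulant intersection matrix is, after one cyclic shift of the $B$-indices, the synchronized configuration.

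The main obstacle is purely combinatorial bookkeeping: the statement is really a classification of intersection matrices of pairs of $\davidsstar$-configurations up to the hexagonal symmetries of the two hexagons, and the work lies in enumerating shift types and using both sum identities in tandem with $\beta$-complementarity to close the case analysis cleanly. The geometric input from Lemma \ref{Davidintersection} and Remark \ref{Davidproperties} supplies all the needed numerical constraints; the remainder is a finite check.
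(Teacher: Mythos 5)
Your argument is correct and follows exactly the route the paper indicates: the paper offers no written proof beyond the remark that the claim follows by ``applying Lemma \ref{Davidintersection} and Remark \ref{Davidproperties},'' and your shift-type bookkeeping, the antipodal complementarity $A_i \cdot B_{j+3} = 2 - A_i \cdot B_j$, and the two sum identities $A_i\cdot(B_j + B_{j+2} + B_{j+4}) = 3$ are precisely the intended ingredients. Your enumeration correctly rules out the four-balanced case and pins down the remaining matrices as the asynchronized, abnormal, and synchronized ones up to relabelling, so this is a complete version of the check the paper leaves to the reader.
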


\begin{remark}
\label{Davidauto}
Note that a $\davidsstar$-configuration has automorphism group $\DG_{12}$ as a graph. A~pair of asynchronized, synchronized or abnormal $\davidsstar$-configurations have automorphism groups $\left(\DG_{12} \times \DG_{12} \right) \rtimes \CG_2$, $\DG_{24}$ and $\DG_8 \times \CG_2$ respectively.

In particular, if an element of order $3$ preserves two $\davidsstar$-configurations and faithfully acts on at least one of them, then these configurations are either asynchronized or synchronized.
\end{remark}

If a \davidsstar-configuration is invariant under the action of an element $g$ of order $3$ then either~$g$ acts trivially on this configuration, or faithfully. Let us compute numbers of $g$-invariant \davidsstar-configurations for different types of $g$.

\begin{lemma}
\label{Davidinv}
Let an element $g$ of order $3$ act on a del Pezzo surface $X$ of degree $1$. Then one has the following.

\begin{itemize}

\item If $g$ has type $A_2$ then there are $72$ invariant $(-1)$-curves, and one $g$-invariant \davidsstar-configuration on which $g$ acts faithfully. All $g$-invariant $(-1)$-curves meet each $(-1)$-curve from this $g$-invariant $\davidsstar$-configuration at a point.

\item If $g$ has type $A_2^2$ then there are $12$ invariant $(-1)$-curves forming two \mbox{\davidsstar-configurations}, and two $g$-invariant \davidsstar-configurations on which $g$ acts faithfully. These four $\davidsstar$-configurations are pairwisely asynchronized.

\item If $g$ has type $A_2^3$ then there are $6$ invariant $(-1)$-curves forming a \mbox{\davidsstar-configuration}, and twelve $g$-invariant \davidsstar-configurations on which $g$ acts faithfully. Each of those twelve $\davidsstar$-configurations is asynchronised with the $\davidsstar$-configuration consisting of the six $g$-invariant $(-1)$-curves.

\item If $g$ has type $A_2^4$ then there are $40$ invariant \davidsstar-configurations on which $g$ acts faithfully.

\end{itemize}

\end{lemma}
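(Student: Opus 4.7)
The plan is to translate the problem to the root lattice $\mathrm{E}_8 \cong K_{\XX}^{\perp} \subset \Pic(\XX)$ via the bijection $C \mapsto C + K_{\XX}$, which identifies the $240$ $(-1)$-curves with the $240$ roots. Under this bijection $\beta$ corresponds to negation, so each $\davidsstar$-configuration comes from a sub-root-system of type $\mathrm{A}_2$ (six roots in three antipodal pairs, with the prescribed mutual angles), and $g$-invariant $(-1)$-curves correspond to $g$-invariant roots. I will handle each of the four Carter types separately using an explicit representative supplied by Lemmas \ref{A2A22} and \ref{geom3}.

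For the type $\mathrm{A}_2^4$ the argument is immediate from Example \ref{Davidexample}: by Lemma \ref{geom3} the element is (up to conjugacy) the automorphism $\alpha$, and every $\langle \alpha, \beta \rangle$-orbit of a $(-1)$-curve is a $\davidsstar$. Since $\rho(\XX)^{\alpha} = 1$ and no nonzero multiple of $K_{\XX}$ is a $(-1)$-curve class, no $(-1)$-curve is $\alpha$-invariant; hence the $240$ curves partition into $240/6 = 40$ orbits of $\langle \alpha, \beta \rangle$, each a $\davidsstar$ on which $\alpha$ acts faithfully.

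For the types $\mathrm{A}_2$ and $\mathrm{A}_2^2$, take the explicit permutation representatives $g = (123)$ and $g = (123)(456)$ in $\SG_8 \subset \mathrm{W}(\mathrm{E}_8)$ furnished by Lemma \ref{A2A22}, and count the $g$-invariant curves in each of the seven classes of Notation \ref{DP1linesnot}. This yields $5 + 10 + 11 + 20 + 11 + 10 + 5 = 72$ invariants in the first case and $2 + 1 + 2 + 2 + 2 + 1 + 2 = 12$ in the second. For the faithful $\davidsstar$'s I look for $g$-orbits of three $(-1)$-curves with pairwise intersection $2$; a direct case analysis over the seven classes shows that the only such orbits consist of cubics $C_{i-j}$ whose indices lie inside a single three-cycle of $g$, producing one orbit plus its Bertini conjugate for $\mathrm{A}_2$ and two plus their conjugates for $\mathrm{A}_2^2$. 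The two $\davidsstar$'s consisting of invariant curves in the $\mathrm{A}_2^2$ case are recovered via Remark \ref{Davidproperties} starting from the disjoint pairs $\{E_7, E_8\}$ and $\{L_{78}, Q_{123}\}$. The asynchronization statement follows by computing a few representative intersection numbers and invoking Remark \ref{Davidauto}: the synchronized and abnormal patterns are ruled out because the intersection numbers between a $g$-orbit in one configuration and a $g$-orbit in the other must be constant, which forces all to equal $1$. The meeting-at-a-point statement in the $\mathrm{A}_2$ case is then a consequence of Lemma \ref{Davidintersection}, since the alternative pattern $\{0, 0, 1, 1, 2, 2\}$ is incompatible with the $g$-invariance of intersection numbers on the two orbits $\{H_1, H_3, H_5\}$ and $\{H_2, H_4, H_6\}$.

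For the type $\mathrm{A}_2^3$ I realize $g$ geometrically as an element of type $\mathrm{II}$ by Lemma \ref{geom3}, with fixed locus consisting of an elliptic curve (of class $-K_X$) and two isolated points $p_1, p_2$ swapped by $\beta$. Any $g$-invariant $(-1)$-curve $C$ is rational, and $g$ acts on it as an order-three rotation with two fixed points lying in the $g$-fixed locus on $X$; the constraint $C \cdot (-K_X) = 1$ excludes the cases where both fixed points lie on the elliptic curve or both coincide with $\{p_1, p_2\}$, forcing $C$ to meet the elliptic curve in one point and to pass through exactly one of $p_1, p_2$. The $\beta$-pairing swaps $p_1$ with $p_2$, so the invariant curves occur in three Bertini pairs, three through each $p_i$, for six in total; translating to roots, these correspond precisely to the six roots in the fixed sublattice of $g$, which by the Carter-graph description is of rank $2$ and is a root system of type $\mathrm{A}_2$, hence automatically yields a $\davidsstar$ under the bijection of the first paragraph. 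For the twelve faithful $\davidsstar$'s I enumerate $g$-invariant $\mathrm{A}_2$ sub-root-systems not contained in the fixed sublattice: each is determined by a $g$-orbit $\{\alpha, g\alpha, g^2\alpha\}$ of roots with pairwise pairing $\alpha \cdot g\alpha = 1$, equivalently summing to zero, together with its negative orbit. Of the $78$ orbits of $g$ on the $234$ non-invariant roots, I expect exactly $24$ to satisfy this pairing condition, giving $24/2 = 12$ faithful $\davidsstar$'s, and the asynchronization with the unique invariant $\davidsstar$ follows from Remark \ref{Davidauto}. The main obstacle is this last enumeration, which requires a hands-on description of the $\mathrm{A}_2^3$ action on $\mathrm{E}_8$ (for instance via its embedding $\mathrm{A}_2^3 \subset \mathrm{E}_6 \subset \mathrm{E}_8$) sufficient to classify the $78$ non-trivial orbits of roots by the value of $\alpha \cdot g\alpha \in \{-1, 0, 1\}$.
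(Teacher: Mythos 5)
Your treatment of the types $A_2$, $A_2^2$ and $A_2^4$ is essentially the paper's argument (explicit representatives $(123)$ and $(123)(456)$ in $\SG_8$ from Lemma \ref{A2A22}, direct listing of invariant curves, and Example \ref{Davidexample} for $\alpha$), repackaged in the root lattice via $C \mapsto C + K_{\XX}$; the counts $72$ and $12$ and the identification of the faithful configurations with orbits of the $C_{i-j}$ are correct. The genuine gap is in the type $A_2^3$ case, and you name it yourself: the claim that exactly $24$ of the $78$ nontrivial $g$-orbits of roots satisfy $\alpha \cdot g\alpha = 1$ is only ``expected,'' not proved, so the count of twelve faithful $\davidsstar$-configurations --- which is the substantive content of this bullet and the one used later in Section 5 --- is left open. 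The paper closes exactly this point by a geometric argument: each of the six disjoint pairs among the six $g$-invariant $(-1)$-curves gives a $\langle g \rangle$-equivariant contraction to a $\langle g \rangle$-minimal cubic surface; the $g$-orbits of the $27$ lines there consist of pairwise meeting lines, so their preimages do not sum to $-3K_X$ and cannot lie in a faithful $\davidsstar$-configuration; no $(-1)$-curve is excluded twice (by Lemma \ref{Davidintersection}), so $6 \cdot 27 = 162$ curves are excluded, and the remaining $240 - 162 - 6 = 72$ curves are shown, by pairing against $K_{\XX}$, $E_7$, $E_8$, to satisfy $A + gA + g^2A \sim -3K_X$ and hence to form twelve configurations. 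Some such argument (or an honest orbit-by-orbit computation in $\mathrm{E}_8$) is indispensable; without it the lemma is not proved.

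Two smaller points. First, in the $A_2^3$ case you assert that the fixed sublattice is an $A_2$ root lattice with exactly six roots ``by the Carter-graph description''; the Carter graph only gives its rank, and the assertion that three invariant curves pass through each isolated fixed point is likewise unjustified. The paper instead derives $\Pic(\XX)^{\langle g \rangle} = \langle K_{\XX}, E_7, E_8 \rangle$ from the equivariant contraction to a cubic surface and then solves $C^2 = C \cdot K_{\XX} = -1$ in that lattice. Second, your asynchronization argument --- that intersection numbers between $g$-orbits ``must be constant, which forces all to equal $1$'' --- is not valid for two configurations on both of which $g$ acts faithfully: equivariance only yields $A_i \cdot B_j = A_{i+2} \cdot B_{j+2}$, and the synchronized pattern of Corollary-definition \ref{2Daviddef} satisfies this (consistently with Remark \ref{Davidauto}, which does not exclude the synchronized case). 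The constancy argument does work when one of the two configurations consists of $g$-invariant curves (this is how the last sentence of the first bullet follows from Lemma \ref{Davidintersection}), but for the two faithful configurations in the $A_2^2$ case one must compute, e.g., $C_{1-2} \cdot C_{4-5} = 1$ directly.
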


\begin{proof}

We use Notation \ref{DP1linesnot}.

By Lemma \ref{A2A22} an element of type $A_2$ is conjugate to $(123) \in \SG_8 \subset \mathrm{W}(\mathrm{E}_8)$. \mbox{A $(-1)$-curve} is $(123)$-invariant if and only if its set of subscripts is $(123)$-invariant. Therefore it is easy to find that there are exactly $72$ invariant $(-1)$-curves on $X$: $E_4$, $\ldots$, $E_8$, $L_{45}$, $\ldots$, $L_{78}$, $Q_{123}$, $Q_{456}$, $\ldots$, $Q_{678}$, $C_{4-5}$, $\ldots$, $C_{8-7}$, $\beta Q_{123}$, $\beta Q_{456}$, $\ldots$, $\beta Q_{678}$, $\beta L_{45}$, $\ldots$, $\beta L_{78}$, $\beta E_4$, $\ldots$, $\beta E_8$.

If a curve $H$ is contained in a $(123)$-invariant \davidsstar-configuration, then by Remark \ref{Davidproperties} one has $H + gH + g^2H \sim -3K_X$. Therefore $H$ is $C_{i-j}$ for certain $i$ and $j$. One can check that the only $(123)$-invariant \davidsstar-configuration, on which $g$ acts faithfully, consists of~$(-1)$-curves $C_{1-2}$, $C_{3-2}$, $C_{3-1}$, $C_{2-1}$, $C_{2-3}$ and $C_{1-3}$.

~

By Lemma \ref{A2A22} an element of type $A_2^2$ is conjugate to $(123)(456) \in \SG_8 \subset \mathrm{W}(\mathrm{E}_8)$. As in the previous case one can check that there are $12$ invariant $(-1)$-curves on $X$ and that these curves form two \davidsstar-configurations: $E_7$, $E_8$, $C_{7-8}$, $\beta E_7$, $\beta E_8$, $C_{8-7}$ and $L_{78}$, $\beta Q_{123}$, $Q_{456}$, $\beta L_{78}$, $Q_{123}$, $\beta Q_{456}$.

Also as in the previous case one can check that there are two $(123)(456)$-invariant \mbox{\davidsstar-configurations}, on which $g$ acts faithfully: $C_{1-2}$, $C_{3-2}$, $C_{3-1}$, $C_{2-1}$, $C_{2-3}$, $C_{1-3}$ and $C_{4-5}$, $C_{6-5}$, $C_{6-4}$, $C_{5-4}$, $C_{5-6}$, $C_{4-6}$. One can check that the constructed four $\davidsstar$-configurations are pairwisely asynchronized.

~

As in the proof of Lemma \ref{geom3} if $g$ has type $A_2^3$ then there exists a $\langle g \rangle$-equivariant contraction $\XX \rightarrow \overline{Y}$ of two $(-1)$-curves, where $\overline{Y}$ is a cubic surface. Therefore \mbox{$\Pic(\XX)^{\langle g \rangle} \cong \mathbb{Z}^3$} is generated by $K_{\XX}$, $E_7$ and $E_8$. Applying Notation \ref{DP1linesnot} one can easily see that the only $g$-invariant $(-1)$-curves are $E_7$, $E_8$, $C_{7-8}$, $\beta E_7$, $\beta E_8$, $C_{8-7}$. These curves form \mbox{a $\davidsstar$-configuration}.

Note that for each pair of $g$-invariant disjoint $(-1)$-curves there exists a $\langle g \rangle$-equivariant contraction $\XX \rightarrow \overline{Y}'$, where $\overline{Y}'$ is $\langle g \rangle$-minimal cubic surface. The $g$-orbit of any line on this surface consists of three meeting each other lines. The preimages of these orbits on~$\XX$ do not have class $-3K_X$. Thus these orbits can not be contained in $g$-equivariant $\davidsstar$-configurations. There are $27$ lines on each cubic surface and $6$ ways to get a cubic surface starting from $\XX$. Moreover, a $(-1)$-curve $A$ on $\XX$ can not be $g$-equivariantly mapped to a line on two such cubic surfaces, since in this case $A$ is disjoint with at least three $g$-invariant $(-1)$-curves on $\XX$, that is impossible by Lemma \ref{Davidintersection}.

Therefore $162$ $(-1)$-curves on $\XX$ are not contained in $\davidsstar$-configurations and six $(-1)$-curves are $g$-invariant. Let us show that the remaining $72$ $(-1)$-curves form twelve $g$-invariant \davidsstar-configurations. By Lemma \ref{Davidintersection} if a $(-1)$-curve $A$ can not be $g$-equivariantly mapped to a line on a cubic surface then $A$ meets each $g$-invariant $(-1)$-curve at a point. Assume that
$$
A + gA + g^2A \sim lK_X + mE_7 + nE_8.
$$
Then one have
$$
-3 = K_X \cdot \left( A + gA + g^2A \right) = K_X \cdot \left( lK_X + mE_7 + nE_8 \right) =  l - m - n,
$$
$$
3 = E_7 \cdot \left( A + gA + g^2A \right) = E_7 \cdot \left( lK_X + mE_7 + nE_8 \right) =  -l - m,
$$
$$
3 = E_8 \cdot \left( A + gA + g^2A \right) = E_8 \cdot \left( lK_X + mE_7 + nE_8 \right) =  -l - n.
$$
\noindent Therefore $l = -3$, $m = n = 0$, and $A + gA + g^2A \sim -3K_X$. One can easily check that
$$
A \cdot gA = gA \cdot g^2A = A \cdot gA^2 = 2.
$$
\noindent Hence the $(-1)$-curves $A$, $gA$, $gA^2$, $\beta A$, $\beta gA$ and $\beta g^2A$ form a $g$-invariant $\davidsstar$-configuration.

~

For type $A_2^4$ the assertion follows from Lemma \ref{geom3} and Example \ref{Davidexample}.

\end{proof}

\begin{corollary}
\label{Davidnotrat}
Let $X$ be a del Pezzo surface of degree $1$. If the group $\Gamma \subset \mathrm{W}(\mathrm{E}_8)$ contains an element $g$ of order $3$ that acts faithfully on three $g$-invariant \davidsstar-configurations then $X$ is not $\ka$-rational.
\end{corollary}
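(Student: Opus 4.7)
The plan is to show that the hypothesis forces the Carter graph of $g$ to be $A_2^3$ or $A_2^4$, and then invoke Corollary \ref{geom3nonrat}.

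First, I would recall that by the classification preceding Lemma \ref{A2A22}, every element of order $3$ in $\mathrm{W}(\mathrm{E}_8)$ has one of the Carter graphs $A_2$, $A_2^2$, $A_2^3$, or $A_2^4$. Lemma \ref{Davidinv} counts, in each case, the number of $g$-invariant \davidsstar-configurations on which $g$ acts faithfully: these counts are $1$, $2$, $12$, and $40$ respectively. Since the hypothesis provides three $g$-invariant \davidsstar-configurations on which $g$ acts faithfully, the cases $A_2$ and $A_2^2$ are immediately excluded, leaving only $A_2^3$ and $A_2^4$.

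Once the type of $g$ is pinned down to $A_2^3$ or $A_2^4$, the conclusion is exactly the content of Corollary \ref{geom3nonrat}: the presence of such an element in $\Gamma$ forces $X$ to be non-$\ka$-rational. No further argument is needed, since the relevant bounds on the Picard rank and on $K_Y^2$ for the minimal model $Y$ of $X$ were already established there (using Theorem \ref{ratcrit} together with the geometric factorization via a cubic surface provided by \cite[Theorem A]{dF04} in the proof of Lemma \ref{geom3}).

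The main (and only) point that requires care is verifying that the counting in Lemma \ref{Davidinv} is stated in the form I am using: namely, that the numbers $1,2,12,40$ really refer to the invariant \davidsstar-configurations on which $g$ acts \emph{faithfully} (not to the total number of $g$-invariant configurations, some of which $g$ fixes pointwise). Reading Lemma \ref{Davidinv} carefully, this is indeed the case: for types $A_2^2$ and $A_2^3$, the lemma separately lists the configurations stabilized pointwise and those on which $g$ acts nontrivially, and for type $A_2$ there is no $g$-invariant configuration stabilized pointwise (the only invariant one is faithful). Hence the three-configuration hypothesis cleanly eliminates the types $A_2$ and $A_2^2$, and the corollary follows at once.
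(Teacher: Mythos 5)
Your argument is exactly the paper's proof: Lemma \ref{Davidinv} shows that an element of type $A_2$ or $A_2^2$ acts faithfully on only one or two invariant $\davidsstar$-configurations respectively, so three such configurations force type $A_2^3$ or $A_2^4$, and Corollary \ref{geom3nonrat} then gives non-$\ka$-rationality. One immaterial slip in your closing paragraph: for type $A_2$ there \emph{do} exist $g$-invariant $\davidsstar$-configurations fixed pointwise (e.g.\ the one spanned by the disjoint invariant curves $E_4$ and $E_5$), but this does not affect the count of configurations on which $g$ acts faithfully, which is all the hypothesis refers to.
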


\begin{proof}
By Lemma \ref{Davidinv} the element $g$ has type $A_2^3$ or $A_2^4$, and by Corollary \ref{geom3nonrat} in these cases $X$ is not $\ka$-rational.
\end{proof}

We give one more lemma that is useful to prove that a del Pezzo surface of degree $1$ is not $\ka$-rational. Actually, we do not use this lemma further, but put it here to show that the notion of $\davidsstar$-configuration is useful not only for elements of order $3$.

\begin{lemma}
\label{Davidnotrateven}
Let $X$ be a del Pezzo surface of degree $1$. If the group $\Gamma \subset \mathrm{W}(\mathrm{E}_8)$ contains an element $g$ that acts faithfully on a $g$-invariant \davidsstar-configuration and for any curve $H$ in this configuration one has $H \cdot gH = 3$ then $X$ is not $\ka$-rational.
\end{lemma}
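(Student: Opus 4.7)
The plan is to first pin down the action of $g$ on the $\davidsstar$-configuration, then recognize that $g$ must coincide with the Bertini involution on the span of the configuration, and finally run $\Gamma$-equivariant MMP to derive non-$\ka$-rationality via Theorem \ref{ratcrit}.

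First I would analyze the combinatorics. The condition $H\cdot gH=3$ for every $H\in\{H_1,\dots,H_6\}$, together with the intersection table of a $\davidsstar$-configuration (Definition \ref{Daviddef}), singles out the unique graph automorphism of $\davidsstar$ with this property, namely the antipodal map $H_i\mapsto H_{i+3}$. In particular, this permutation has order $2$, and since $g$ acts faithfully on $\davidsstar$ it follows that $g$ itself is an involution.

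Next, by Remark \ref{Davidproperties} we have $H_{i+3}=\beta H_i$, so $g$ and $\beta$ agree on the three-dimensional sublattice $\Lambda\subset\Pic(\XX)$ spanned by $H_1,H_2,K_X$ (which is the full $\davidsstar$-span). Since $\beta$ is central in $\Aut_{\ka}(X)$, the element $h:=g\beta$ commutes with everything in $\Gamma$, is an involution (or trivial), and acts as the identity on $\Lambda$. I would use this to describe $g$ on $\Pic(\XX)$ as $g=\beta$ on $\Lambda$ and $g=-h$ on the orthogonal complement of $\Lambda$ in $K_X^{\perp}$, which in turn pins down the conjugacy class of $g$ in $\mathrm{W}(\mathrm{E}_8)$ up to the choice of the involution $h$.

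Finally, I would argue by contradiction: assume $X$ is $\ka$-rational, and let $Y$ be a $\Gamma$-minimal model of $X$; by Theorem \ref{ratcrit} one has $K_Y^2\geqslant5$, so a $\Gamma$-equivariant birational morphism $\XX\to\overline{Y}$ must contract at least four disjoint $(-1)$-curves in $\Gamma$-orbits. Since $g$ pairs the $\davidsstar$-curves into three orbits $\{H_i,H_{i+3}\}$ with $H_i\cdot H_{i+3}=3\neq 0$, none of the $\davidsstar$-curves can lie in a contracted $\Gamma$-orbit. Using the relation $\sum H_i\sim -6K_X$ from Remark \ref{Davidproperties}, together with Lemma \ref{Davidintersection} controlling how arbitrary $(-1)$-curves meet the $\davidsstar$, one computes that contracting any other $\Gamma$-orbit of $(-1)$-curves leaves the six classes $H_i$ effective on every intermediate surface, forcing the image of the $\davidsstar$ to persist and bounding $K_Y^2$ above by $4$, contradicting $K_Y^2\geqslant5$.

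The main obstacle is the last step: one must show that the obstruction coming from the $\davidsstar$-configuration is not destroyed by contractions outside $\davidsstar$. I would handle this by adapting the analysis of Lemma \ref{Davidinv}: an involution $g$ satisfying our hypothesis has a Carter graph containing enough copies of $A_1$ so that the fixed sublattice $\Pic(\XX)^{\langle g\rangle}$ has small rank, and the same lattice estimate that underpins Corollary \ref{geom3nonrat} then forces the $\Gamma$-minimal model to have $K_Y^2<5$.
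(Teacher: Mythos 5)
Your combinatorial opening is correct: since $H_{i+3}$ is the only member of the configuration meeting $H_i$ with multiplicity $3$, the hypothesis forces $gH_i = H_{i+3} = \beta H_i$, so $g$ acts as the antipodal involution and agrees with the Bertini involution on the span of the $\davidsstar$-configuration. (Note, though, that the paper does not conclude $g^2=1$; it only reduces to $\ord g = 2^k$, so your later steps, which lean on $g$ being an involution, read the word ``faithfully'' more strongly than the proof in the paper does.) The genuine gap is your final step, which is where all the content of the lemma lives. You assert that ``one computes that \dots\ the image of the $\davidsstar$ persists, \dots\ bounding $K_Y^2$ above by $4$,'' but no such computation exists: persistence of six effective classes on intermediate surfaces does not bound the degree, since after contracting curves meeting the $H_i$ their images have self-intersection $-1+\sum k_j^2 \geqslant -1$ and there is no numerical obstruction to their living on $\Pro^2_{\kka}$. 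Your proposed fallback --- a rank estimate for $\Pic(\XX)^{\langle g\rangle}$ in the spirit of Corollary \ref{geom3nonrat} --- also fails. The classes $H_i+K_X$ are the six roots of an $A_2$-sublattice of $K_X^{\perp}\cong \mathrm{E}_8$, and the hypothesis says exactly that $g$ acts as $-1$ on this $A_2$; an involution of Carter type $A_1^4$ whose $(-1)$-eigenspace is a $D_4$ does this and has $\rho(\XX)^{\langle g\rangle}=5$, which by rank count alone is perfectly compatible with a $\langle g\rangle$-minimal model of degree $5$. So no lattice-rank argument can close the proof.

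The paper closes it differently. Assuming $\ka$-rationality, it produces a $\langle g\rangle$-equivariant morphism $\XX\to\overline{Y}$ with $K_{\overline{Y}}^2\geqslant 5$, observes that a del Pezzo surface of degree $5$ or $6$ cannot be minimal under a group of order $2^k$, and reduces (via the degree-$7$ intermediate step in the $\Pro^1_{\kka}\times\Pro^1_{\kka}$ case) to $\overline{Y}\cong\Pro^2_{\kka}$. Then $g$ lies in the subgroup $\SG_8\subset\mathrm{W}(\mathrm{E}_8)$ permuting the eight exceptional classes, and the explicit list of $(-1)$-curves in Notation \ref{DP1linesnot} shows that a pair $H$, $gH$ with $H\cdot gH=3$ and $g\in\SG_8$ must be of the form $C_{i-j}$, $C_{j-i}$; three such antipodal pairs force $g$ to contain three disjoint transpositions, the subscript sets are then pairwise disjoint, and distinct pairs meet with multiplicity $1$, which is impossible inside a $\davidsstar$-configuration. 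This identification of the antipodal pairs inside $\SG_8$ is the step your plan is missing, and without it (or a worked-out substitute) the argument does not go through.
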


\begin{proof}
Obviously $g$ has even order. The conditions of Lemma \ref{Davidnotrateven} hold for odd powers of~$g$, therefore without loss of generality we may assume that order of $g$ is $2^k$.

Assume that $X$ is $\ka$-rational and consider $\langle g \rangle$-MMP for $\XX$. By Theorem \ref{ratcrit} there exists a $\Gamma$-equivariant map $\XX \rightarrow \overline{Y}'$ to a $\Gamma$-minimal del Pezzo surface $\overline{Y}'$ with $K_{\overline{Y}'}^2 \geqslant 5$. Therefore there exists a $\langle g \rangle$-invariant map $\XX \rightarrow \overline{Y}$ to a $\langle g \rangle$-minimal del Pezzo surface $\overline{Y}$ with $K_{\overline{Y}}^2 \geqslant 5$.

It is well known that a del Pezzo surface of degree $5$ or $6$ cannot be $\langle g \rangle$-minimal if $g$ has order $2^k$. Therefore either $\overline{Y} \cong \Pro^1_{\kka} \times \Pro^1_{\kka}$, or $\overline{Y} \cong \Pro^2_{\kka}$. In the former case $\XX \rightarrow \overline{Y}$ is \mbox{a $\langle g \rangle$-equivariant} contraction of seven $(-1)$-curves. Thus one of this curves \mbox{is $\langle g \rangle$-invariant}, and one can $\langle g \rangle$-equivariant contract the other six $(-1)$-curves and get a~del Pezzo surface $\overline{Z}$ of degree $7$. There is a $\langle g \rangle$-equivariant morphism $\overline{Z} \rightarrow \Pro^2_{\kka}$. So this case can be reduced to the latter case $\overline{Y} \cong \Pro^2_{\kka}$.


If $\overline{Y} \cong \Pro^2_{\kka}$ then $\XX \rightarrow \overline{Y}$ is a $\langle g \rangle$-equivariant contraction of eight $(-1)$-curves. Therefore~$g$ is an element of order $2^k$ in $\SG_8 \subset \mathrm{W}(\mathrm{E}_8)$. In Notation \ref{DP1linesnot} any pair of $(-1)$-curves $H$ and~$gH$ such that $H \cdot gH = 3$, is a pair $C_{i-j}$ and $C_{j-i}$ for certain subscripts $i$ and $j$. Moreover, these pairs do not have common subscripts. One can easily see that such pairs cannot form a \davidsstar-configuration. Thus we have a contradiction, and $X$ is not $\ka$-rational.

\end{proof}

Now for elements of order $3$ we show how to proof $\ka$-rationality and $G$-minimality in terms of $\davidsstar$-configurations.

\begin{lemma}
\label{Davidmin}
Let $\XX$ be a del Pezzo surface of degree $1$, and $G$ be a group acting on $\XX$, such that there are four $G$-invariant pairwisely asynchronized $\davidsstar$-configurations $\davidsstar_1$, $\davidsstar_2$, $\davidsstar_3$ and $\davidsstar_4$. If for each $\davidsstar_i$ there is an element $g_i \in G$ of order $3$ faithfully acting on $\davidsstar_i$ then~$\rho(\XX)^G = 1$.
\end{lemma}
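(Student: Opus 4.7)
The plan is to work in the orthogonal decomposition $\Pic(\XX) \otimes \Q = \Q K_X \oplus (K_X^\perp \otimes \Q)$ and show that no nonzero element of $K_X^\perp \otimes \Q$ is $G$-invariant. Writing any $(-1)$-curve as $A = -K_X + A^0$ with $A^0 \in K_X^\perp \otimes \Q$ (using $K_X^2 = 1$ and $A \cdot K_X = -1$ from adjunction), the identity $A \cdot B = 1 + A^0 \cdot B^0$ translates all the intersection data of Definition \ref{Daviddef} and Corollary-definition \ref{2Daviddef} into relations between the projected classes.

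First I would analyze a single $\davidsstar$-configuration. The identities $H_i + H_{i+3} \sim -2K_X$ and $H_1 + H_3 + H_5 \sim -3K_X$ from Remark \ref{Davidproperties} project to $H_{i+3}^0 = -H_i^0$ and $H_2^0 = H_1^0 + H_3^0$, so $H_1^0$ and $H_3^0$ form a basis of a rank-$2$ sublattice inside $K_X^\perp$ whose Gram matrix has determinant $3$, in particular nondegenerate. Next I would apply the asynchronization hypothesis: $A_i \cdot B_j = 1$ for curves in distinct $\davidsstar_a, \davidsstar_b$ gives $A_i^0 \cdot B_j^0 = 0$, so the four rank-$2$ sublattices attached to $\davidsstar_1, \ldots, \davidsstar_4$ are pairwise orthogonal in $K_X^\perp$.

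The second step extracts the constraint imposed by $G$-invariance. Since $\DG_{12}$ has a unique subgroup of order $3$, the faithful action of $g_i$ on $\davidsstar_i$ is rotation by two vertices, with orbits $\{H_1, H_3, H_5\}$ and $\{H_2, H_4, H_6\}$. For a $G$-invariant class $D = \lambda K_X + D_0$, the intersection $D \cdot H$ is constant on each orbit; summing $D \cdot (H_1 + H_3 + H_5) = D \cdot (-3K_X) = -3\lambda$ forces $D \cdot H = -\lambda$, and since $\lambda K_X \cdot H = -\lambda$ as well, this reads $D_0 \cdot H = 0$ for every $H$ in every $\davidsstar_i$.

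The final step is a dimension count. The intersection form on $K_X^\perp$ is negative definite by the Hodge index theorem, so pairwise orthogonal subspaces are automatically in direct sum. The four rank-$2$ sublattices therefore span a subspace of rank $8 = \dim_\Q (K_X^\perp \otimes \Q)$, that is, all of $K_X^\perp \otimes \Q$. Hence $D_0$ is orthogonal to the full $K_X^\perp$, and negative-definiteness forces $D_0 = 0$. Every $G$-invariant class is then a rational multiple of $K_X$, giving $\rho(\XX)^G = 1$. The only place where one might fear a subtlety is in ensuring that ``order-$3$ acting faithfully on a hexagon'' gives the rotational orbit structure used above, but this is immediate from the uniqueness of the order-$3$ subgroup in $\DG_{12}$, so no real obstacle arises.
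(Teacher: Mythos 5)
Your proposal is correct and follows essentially the same route as the paper: both project the $(-1)$-curves onto $K_X^{\perp}$ via $H \mapsto H + K_X$, use asynchronization to get four pairwise orthogonal rank-$2$ pieces spanning the $8$-dimensional space $K_X^{\perp}\otimes\Q$, and then use the faithful order-$3$ action on each configuration to kill any invariant class. The only cosmetic difference is the last step, where the paper invokes the absence of nontrivial one-dimensional $\Q$-representations of $\CG_3$ on each $2$-dimensional summand, while you compute directly with the orbit sums $H_i + H_{i+2} + H_{i+4} \sim -3K_X$; both arguments are sound.
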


\begin{proof}
Let $A_i$ and $B_i$ be two disjoint $(-1)$-curves from $\davidsstar_i$. Let us show that $8$-dimensional vector space $V = \Pic(\XX) \otimes \mathbb{Q} \cap K_{\XX}^{\perp}$ is generated by $A_i + K_{\XX}$ and $B_i + K_{\XX}$. Obviously,
$$
\left( A_i + K_{\XX} \right)^2 = \left( B_i + K_{\XX} \right) = -2, \qquad \left( A_i + K_{\XX} \right)\left( B_i + K_{\XX} \right) = -1.
$$
Moreover, for $i \ne j$ one has
$$
\left( A_i + K_{\XX} \right)\left( A_j + K_{\XX} \right) = \left( A_i + K_{\XX} \right)\left( B_j + K_{\XX} \right) = \left( B_i + K_{\XX} \right)\left( B_j + K_{\XX} \right) = 0.
$$

Assume that
$$
D = a_1 \left( A_1 + K_{\XX} \right) + b_1 \left( B_1 + K_{\XX} \right) + \ldots = 0.
$$
Then
$$
D \cdot \left( A_i + K_{\XX} \right) = -2a_i - b_i = 0, \qquad D \cdot \left( B_i + K_{\XX} \right) = -a_i - 2b_i = 0.
$$
Therefore for any $i$ one has $a_i = b_i = 0$. It means that $A_i + K_{\XX}$ and $B_i + K_{\XX}$ are linearly independent, and generate $V$.

The group $G$ faithfully acts on the $\davidsstar$-configurations $\davidsstar_1$, $\davidsstar_2$, $\davidsstar_3$ and $\davidsstar_4$. Thus $V$ can be decomposed into direct sum of four two-dimensional $G$-invariant vector spaces \mbox{$V_i = \langle A_i + K_{\XX}, B_i + K_{\XX} \rangle$}. The group $\langle g_i \rangle$ faithfully acts on $V_i$, but a group of order $3$ does not have one-dimensional representations over $\mathbb{Q}$. Thus $V_i^G = V_i^{\langle g_i \rangle} = 0$ for any $i$. Therefore $V^G = 0$, and $\XX$ is $G$-minimal.

\end{proof}

Applying Lemma \ref{Davidinv} we immediately have the following two corollaries.

\begin{corollary}
\label{Davidmin1}
Let $\XX$ be a del Pezzo surface of degree $1$, and $g$ and $h$ be two commuting elements of order $3$ in $\mathrm{W}(\mathrm{E}_8)$. If $g$ has type $A_2^3$, and $h$ maps a $g$-invariant $(-1)$-curve to the other curve, then $\XX$ is $\langle g, h \rangle$-minimal.
\end{corollary}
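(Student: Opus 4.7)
The plan is to show $\rho(\XX)^{\langle g, h \rangle} = 1$ by analyzing how $h$ acts on the three-dimensional lattice $\Pic(\XX)^{\langle g\rangle}$. By Lemma \ref{Davidinv}, since $g$ has type $A_2^3$, the six $g$-invariant $(-1)$-curves form a single $\davidsstar$-configuration $\davidsstar_0 = \{H_1, \ldots, H_6\}$, and $\Pic(\XX)^{\langle g\rangle} \otimes \Q$ splits orthogonally as $\Q K_{\XX} \oplus W$ with $\dim W = 2$. Since $h$ commutes with $g$ it preserves $\davidsstar_0$, and by hypothesis acts nontrivially on it; because $h$ has order $3$, at least one $h$-orbit in $\davidsstar_0$ has length $3$. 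Any such orbit has pairwise-equal intersection numbers by $h$-equivariance, so it must be one of the two triangles $\{H_1, H_3, H_5\}$ or $\{H_2, H_4, H_6\}$ (where all pairwise intersections equal $2$); the other triangle is then also $h$-invariant, and after relabeling I may assume $h \colon H_1 \mapsto H_3 \mapsto H_5 \mapsto H_1$.

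Next I would set $v_i = H_i + K_{\XX}$ for $i \in \{1, 3, 5\}$. A short computation gives $v_i \cdot K_{\XX} = -1 + 1 = 0$, so each $v_i$ lies in $W$, and Remark \ref{Davidproperties} yields $H_1 + H_3 + H_5 \sim -3K_{\XX}$, hence $v_1 + v_3 + v_5 = 0$; a Gram-matrix check shows $v_1$ and $v_3$ are linearly independent, so the $v_i$ span $W$. Since $h$ cyclically permutes $v_1, v_3, v_5$, any $h$-invariant element written as $a v_1 + b v_3 + c v_5 \in W$ forces $a = b = c$, and this combined with $v_1 + v_3 + v_5 = 0$ forces it to be $0$. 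Therefore $W^{\langle h \rangle} = 0$, so $\Pic(\XX)^{\langle g, h \rangle} \otimes \Q = \Q K_{\XX}$ and $\rho(\XX)^{\langle g, h \rangle} = 1$, i.e.\ $\XX$ is $\langle g, h \rangle$-minimal.

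The main obstacle is the combinatorial step identifying the action of $h$ on $\davidsstar_0$ as the rotation with two triangular $3$-orbits; once this is in hand the rest is a short representation-theoretic check that $\Z/3$ has no nontrivial rational invariants on the two-dimensional representation $W$ realized by a $3$-cycle. The combinatorial step itself is a finite case-analysis using the intersection pattern $H_i \cdot H_{i+1} = 0$, $H_i \cdot H_{i+2} = 2$, $H_i \cdot H_{i+3} = 3$, ruling out $3$-orbits with pairwise intersection $0$ or $3$.
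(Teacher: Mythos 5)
Your proof is correct and follows essentially the same route as the paper, which deduces the corollary from Lemma \ref{Davidinv} together with the invariant-theory computation underlying Lemma \ref{Davidmin}: the rank-$3$ lattice $\Pic(\XX)^{\langle g \rangle}$ splits off $\Q K_{\XX}$, and an order-$3$ element acting faithfully on the $\davidsstar$-configuration of $g$-invariant curves has no invariants on the $2$-dimensional complement. The only cosmetic difference is that you span that complement by the triangle $\{H_1+K_{\XX}, H_3+K_{\XX}, H_5+K_{\XX}\}$ with its sum relation, whereas the paper's Lemma \ref{Davidmin} uses a disjoint pair $A+K_{\XX}$, $B+K_{\XX}$; both realize the same irreducible rational $\CG_3$-representation.
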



\begin{remark}
One can easily see that the conditions of Corollary \ref{Davidmin1} are equivalent to the fact that the element $h$ faithfully acts on the $\davidsstar$-configuration, consisting of $g$-invariant $(-1)$-curves.
\end{remark}

\begin{corollary}
\label{Davidmin2}
Let $\XX$ be a del Pezzo surface of degree $1$, and $g$ and $h$ be two commuting elements of order $3$ in $\mathrm{W}(\mathrm{E}_8)$. If $g$ has type $A_2^2$, and $h$ faithfully acts on the both \mbox{$\davidsstar$-configurations}, consisting of $g$-invariant $(-1)$-curves, then $\XX$ is $\langle g, h \rangle$-minimal.
\end{corollary}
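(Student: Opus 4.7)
The plan is to reduce this corollary to Lemma~\ref{Davidmin}, using the structural description of $g$-invariant $\davidsstar$-configurations from Lemma~\ref{Davidinv}. By that lemma, since $g$ has type $A_2^2$, there are exactly four $g$-invariant $\davidsstar$-configurations $\davidsstar_1, \davidsstar_2, \davidsstar_3, \davidsstar_4$, and they are pairwise asynchronized. The first two consist of the twelve $g$-invariant $(-1)$-curves (so $g$ acts trivially on $\davidsstar_1$ and $\davidsstar_2$), while $g$ acts faithfully on each of $\davidsstar_3$ and $\davidsstar_4$.

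My first step is to verify that all four configurations are $\langle g, h \rangle$-invariant. Since $g$ and $h$ commute, $h$ preserves the set of $g$-invariant $(-1)$-curves, and hence permutes the four $g$-invariant $\davidsstar$-configurations. By hypothesis, $h$ preserves each of $\davidsstar_1$ and $\davidsstar_2$ (and acts faithfully on them), so $h$ restricts to a permutation of the two-element set $\{\davidsstar_3, \davidsstar_4\}$. Because $h$ has order $3$, this permutation must be trivial, so $\davidsstar_3$ and $\davidsstar_4$ are each $h$-invariant individually; of course they are also $g$-invariant by choice. Hence all four $\davidsstar$-configurations are $\langle g, h\rangle$-invariant.

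Next I supply, for each $\davidsstar_i$, an element of order $3$ in $\langle g, h \rangle$ that acts faithfully on it. For $\davidsstar_1$ and $\davidsstar_2$ we take $h$ (which acts faithfully by assumption), and for $\davidsstar_3$ and $\davidsstar_4$ we take $g$ (which acts faithfully by Lemma~\ref{Davidinv}). Combined with the pairwise asynchronization already established in Lemma~\ref{Davidinv}, all hypotheses of Lemma~\ref{Davidmin} are met, so that lemma yields $\rho(\XX)^{\langle g, h\rangle} = 1$, i.e.\ $\XX$ is $\langle g, h \rangle$-minimal.

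The only nontrivial point is confirming that $h$ does not swap $\davidsstar_3$ with $\davidsstar_4$, but this is immediate from $|h| = 3$ acting on a $2$-element set. Everything else is a direct bookkeeping application of the previous lemma, so I do not expect any serious obstacle; the crux of the work is really already packaged inside Lemmas~\ref{Davidinv} and~\ref{Davidmin}.
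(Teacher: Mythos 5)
Your proposal is correct and matches the paper's intent: the paper gives no written proof, stating only that the corollary follows immediately from Lemma \ref{Davidinv} (combined with Lemma \ref{Davidmin}), which is precisely the reduction you carry out. Your extra observation that $h$, having order $3$, cannot swap the two configurations on which $g$ acts faithfully is the right (and only) point needing verification, and you handle it correctly.
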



The following lemma is useful to prove $\ka$-rationality of del Pezzo surfaces.

\begin{lemma}
\label{RatLemma}
Let $X$ be a del Pezzo surface and $A$, $B$ and $C$ be a triple of $(-1)$-curves on $X$ defined over $\ka$, such that $A \cdot B = B \cdot C = 1$, and $A \cdot C = 0$. Then $X$ is $\ka$-rational.
\end{lemma}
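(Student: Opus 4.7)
The plan is to build a $\ka$-birational map $X \dashrightarrow \Pro^1_{\ka} \times \Pro^1_{\ka}$ from two $\ka$-morphisms to $\Pro^1_{\ka}$, determined by the pencils $|D_1|$ and $|D_2|$ with $D_1 = A + B$ and $D_2 = B + C$ (both defined over $\ka$). First, I would record the intersection numbers $D_1^2 = D_2^2 = 0$, $D_1 \cdot K_X = D_2 \cdot K_X = -2$, and $D_1 \cdot D_2 = (A + B)(B + C) = 1$, and verify nefness of $D_1$: one has $D_1 \cdot A = D_1 \cdot B = 0$, and $D_1$ meets every other irreducible curve non-negatively, since any such curve meets the $(-1)$-curves $A$ and $B$ non-negatively. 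The argument for $D_2$ is symmetric.

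Next, since $-K_X$ is ample on the del Pezzo surface $X$, the divisor $D_i - K_X$ is ample (a sum of a nef and an ample divisor), so Kawamata--Viehweg vanishing gives $H^1(X, D_i) = 0$; and $H^2(X, D_i) = H^0(X, K_X - D_i)^{\vee} = 0$ because $(K_X - D_i) \cdot (-K_X) = -K_X^2 - 2 < 0$. Riemann--Roch then yields $h^0(X, D_i) = 1 + \frac{1}{2}(D_i^2 - D_i \cdot K_X) = 2$, so each $|D_i|$ is a pencil. To rule out a fixed component in $|D_1|$, I would note the only effective sub-divisors of $A + B$ are $0$, $A$, $B$, $A + B$; in each of the last three cases the movable part would be $B$, $A$, or $0$, all with $h^0 = 1$ because $A$ and $B$ are rigid $(-1)$-curves, contradicting $h^0(D_1) = 2$. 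Since the fixed part is trivial and $D_1^2 = 0$, the pencil has no base points either (otherwise two general members would force $D_1^2 \geqslant 1$). Thus $|D_1|$ defines a $\ka$-morphism $\phi_1 \colon X \to \Pro\bigl(H^0(X, D_1)^{\vee}\bigr) = \Pro^1_{\ka}$, and similarly $|D_2|$ produces $\phi_2 \colon X \to \Pro^1_{\ka}$.

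Finally, I would examine the product $\phi = \phi_1 \times \phi_2 \colon X \to \Pro^1_{\ka} \times \Pro^1_{\ka}$. Its image cannot be a curve, since otherwise $\phi_1$ and $\phi_2$ would share general fibres, forcing $D_1$ and $D_2$ to be proportional; this contradicts $D_1 \cdot D_2 = 1 \ne 0 = D_1^2$. So $\phi$ is dominant of degree equal to the intersection of a general fibre of $\phi_1$ with a general fibre of $\phi_2$, namely $D_1 \cdot D_2 = 1$; hence $\phi$ is birational. Since $\Pro^1_{\ka} \times \Pro^1_{\ka}$ is $\ka$-rational, so is $X$. The main subtlety is the fixed-component analysis for $|D_i|$, which is where the hypothesis that $A$, $B$, $C$ are $(-1)$-curves (hence rigid) enters in a non-trivial way; all other steps are routine intersection theory and vanishing on a del Pezzo surface, and the argument works uniformly in the degree $K_X^2 \geqslant 1$.
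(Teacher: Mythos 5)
Your argument is correct, but it takes a genuinely different route from the paper. The paper works with the single complete linear system $|A + B + C|$: since $(A+B+C)^2 = 1$ and $(A+B+C)\cdot K_X = -3$, Riemann--Roch gives $h^0 = 3$, and this system defines a birational map $X \dashrightarrow \Pro^2_{\ka}$ directly; the whole proof is three lines, with the vanishing of higher cohomology and the birationality of the resulting map left implicit. You instead split the configuration into the two pencils $|A+B|$ and $|B+C|$ and map to $\Pro^1_{\ka} \times \Pro^1_{\ka}$, using $D_1 \cdot D_2 = 1$ to conclude birationality. Your intersection numbers, the nefness check, the vanishing via $D_i - K_X$ ample, the elimination of fixed components using rigidity of $(-1)$-curves, and the base-point-freeness from $D_i^2 = 0$ are all sound, and the ruling-out of a one-dimensional image is essentially right (if the image were a curve, general members of the two pencils would be unions of common fibres and hence either share components or be disjoint, contradicting $D_1 \cdot D_2 = 1$). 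What your version buys is that every step is explicitly justified and one never has to analyse the behaviour of a two-dimensional system with $D^2 = 1$ (its possible base point, or whether it is composed with a pencil); what the paper's version buys is brevity and a direct birational map to $\Pro^2_{\ka}$. Both arguments use only that $A$, $B$, $C$ and hence the relevant divisor classes are defined over $\ka$, and both work uniformly for any degree $K_X^2 \geqslant 1$.
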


\begin{proof}
Let us consider a complete linear system $\mathcal{L} = |A + B + C|$. For a general member $D$ of $\mathcal{L}$ one has $D^2 = 1$ and $D \cdot K_X = -3$. By the Riemann--Roch theorem one has
$$
\operatorname{dim} H^0 (X , D) = \frac{1}{2}D \cdot(D - K_X) + 1 = 3.
$$
Therefore $\mathcal{L}$ gives a birational map $X \dashrightarrow \Pro^2_{\ka}$.

\end{proof}

\begin{corollary}
\label{RatCor}
Let $X$ be a del Pezzo surface of degree $1$. If all $(-1)$-curves in two asynchronized $\davidsstar$-configurations are defined over $\ka$ then $X$ is $\ka$-rational.
\end{corollary}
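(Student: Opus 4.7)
The plan is to reduce directly to Lemma \ref{RatLemma}, since that lemma already extracts $\ka$-rationality from any triple of $\ka$-rational $(-1)$-curves with the right intersection pattern $(1,1,0)$. So I only need to exhibit such a triple inside the union of the two asynchronized $\davidsstar$-configurations.

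First I would name the configurations $\davidsstar_A = \{A_1,\ldots,A_6\}$ and $\davidsstar_B = \{B_1,\ldots,B_6\}$, following Corollary-Definition \ref{2Daviddef}. The hypothesis ``asynchronized'' means $A_i \cdot B_j = 1$ for all $i,j$, and the definition of a $\davidsstar$-configuration gives $A_i \cdot A_{i+1} = 0$.

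Then I would simply choose $A = A_1$, $B = B_1$, $C = A_2$. All three are $(-1)$-curves on $X$, they are defined over $\ka$ by hypothesis, and their mutual intersection numbers satisfy
\[
A \cdot B = A_1 \cdot B_1 = 1, \qquad B \cdot C = B_1 \cdot A_2 = 1, \qquad A \cdot C = A_1 \cdot A_2 = 0.
\]
Applying Lemma \ref{RatLemma} yields $\ka$-rationality of $X$.

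There is essentially no obstacle here: the only thing one needs to verify is that a triple $(A,B,C)$ with the intersection profile required by Lemma \ref{RatLemma} exists inside $\davidsstar_A \cup \davidsstar_B$, and the asynchronized pattern together with the defining intersection relations of a single $\davidsstar$-configuration gives this immediately. The content of the corollary is really just the observation that the hexagon structure of each $\davidsstar$ always provides a disjoint pair, and any $(-1)$-curve from the other asynchronized $\davidsstar$ meets both members of that pair transversally in one point.
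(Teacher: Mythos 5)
Your proof is correct and is precisely the argument the paper intends: the corollary is stated as an immediate consequence of Lemma \ref{RatLemma}, and your choice $A = A_1$, $C = A_2$ (adjacent, hence disjoint, in one configuration) and $B = B_1$ (from the other, meeting each with multiplicity $1$ by the asynchronized condition) realizes exactly the required intersection pattern. Nothing further is needed.
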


\section{Examples}

In this section we construct explicit examples of quotients of del Pezzo surfaces $X$ of degree $1$ by finite groups $G$ listed in Proposition \ref{DP1} such that $\rho(X)^G = 1$. If $G$ is trivial then $X$ is non-$\ka$-rational by Theorem~\ref{ratcrit}, and if $G$ is of type $\mathrm{I}$ then $X$ is non-$\ka$-rational by \cite[Remark 5.2]{Tr18b} and $X / G$ is non-$\ka$-rational by Remark \ref{DP1typeImin}. For groups of types~$\mathrm{II}$, $\mathrm{III}$ and $\mathrm{IX}$ we show that each of the four possibilities of $\ka$-rationality of $X$ and $X / G$ is realized for certain $\ka$: the surface $X$ can be $\ka$-rational and $X / G$ can be non-$\ka$-rational, $X$ can be non-$\ka$-rational and $X / G$ can be non-$\ka$-rational, $X$ can be $\ka$-rational and $X / G$ can be $\ka$-rational, $X$ can be non-$\ka$-rational and $X / G$ can be $\ka$-rational.

Assume that the field $\ka$ contains $\omega$. Let $X$ be a del Pezzo surface of degree $1$ given in~$\Pro_{\ka}(1 : 1 : 2 : 3)$ by the equation
\begin{equation}
\label{exequation}
Ax^6 + 2Bx^3y^3 +Cy^6 +2z^3 - t^2 = 0.
\end{equation}

A finite group $\CG_3^2$ generated by
$$
g_y: (x : y : z : t) \mapsto (\omega x : y : z : t) \qquad \textrm{and} \qquad g_x: (x : y : z : t) \mapsto (x : \omega y : z : t)
$$
\noindent acts on $X$. The elements $g_{\mathrm{III}} = g_x^2g_y$ and $\alpha = g_xg_y$ lie in $\CG_3^2$.

The set of $g_{\mathrm{III}}$-fixed points consists of five isolated fixed points: $p = (0: 0 : 2 : 4)$ and
$$
q_{x1} = \left(0 : 1 : 0 : \sqrt{C}\right), \quad q_{x2} = \left(0 : 1 : 0 : -\sqrt{C}\right),
$$
$$
q_{y1} = \left(1 : 0 : 0 : \sqrt{A}\right), \quad q_{y2} = \left(1 : 0 : 0 : -\sqrt{A}\right).
$$
\noindent A linear system $\mathcal{L}_{\mathrm{III}}$ given by $\lambda xy = \mu z$ consists of $g_{\mathrm{III}}$-invariant curves.

The set of $g_y$-fixed points consists of the curve $C_x$ given by $x = 0$, and two isolated fixed points $q_{y1}$ and $q_{y2}$. A linear system $\mathcal{L}_{y}$ given by $\lambda y^2 = \mu z$ consists of $g_y$-invariant curves.

The set of $g_x$-fixed points consists of the curve $C_y$ given by $y = 0$, and two isolated fixed points $q_{x1}$ and $q_{x2}$. A linear system $\mathcal{L}_{x}$ given by $\lambda x^2 = \mu z$ consists of $g_x$-invariant curves.

Let us find reducible members in the linear systems $\mathcal{L}_x$, $\mathcal{L}_y$ and $\mathcal{L}_{\mathrm{III}}$. For simplicity in all cases we put $\mu = 1$. We start from the linear system $\mathcal{L}_x$:
$$
(A + 2\lambda^3)x^6 + 2Bx^3y^3 + Cy^6 - t^2 = 0.
$$
This section is reducible if $(A + 2\lambda^3)x^6 + 2Bx^3y^3 + Cy^6$ is a square in $\ka(x , y)$. Therefore one has
$$
B^2 = (A + 2\lambda^3)C,
$$
and components of reducible fibres are given by
$$
\lambda^3 = \frac{B^2 - AC}{2C}, \qquad z = \lambda x^2, \qquad t = \pm \sqrt{C} \left( \frac{B}{C} x^3 + y^3 \right).
$$
Note that these components are $(-1)$-curves that are transitively permuted by the group~$\langle \alpha, \beta \rangle$. Therefore these curves form $\davidsstar$-configuration (see Example \ref{Davidexample}). We denote this configuration by $\davidsstar_x$.

Similarly, components of reducible fibres of $\mathcal{L}_y$ are given by
$$
\lambda^3 = \frac{B^2 - AC}{2A}, \qquad z = \lambda y^2, \qquad t = \pm \sqrt{A} \left( x^3 + \frac{B}{A} y^3 \right).
$$
These components form $\davidsstar$-configuration. We denote this configuration by $\davidsstar_y$.

Now consider reducible members of the linear system $\mathcal{L}_{\mathrm{III}}$:
$$
Ax^6 + 2(B + \lambda^3)x^3y^3 + Cy^6 - t^2 = 0.
$$
This section is reducible if $Ax^6 + 2(B + \lambda^3)x^3y^3 + Cy^6$ is a square in $\ka(x , y)$. Therefore one has
$$
(B + \lambda^3)^2 = AC,
$$
and components of reducible fibres are given by
$$
\lambda^3 = -B \pm \sqrt{AC}, \qquad z = \lambda xy, \qquad t = \pm \sqrt{A} x^3 \pm \sqrt{C} y^3.
$$
These $12$ components are $(-1)$-curves that are contained in two $\langle \alpha, \beta \rangle$-orbits. Therefore these curves form two $\davidsstar$-configurations. We denote these configurations by $\davidsstar_{\mathrm{III}+}$ and~$\davidsstar_{\mathrm{III}-}$. Moreover, $g_{\mathrm{III}}$ faithfully acts on $\davidsstar_x$ and $\davidsstar_y$. Therefore by Lemma \ref{Davidinv} the $\davidsstar$-configurations $\davidsstar_x$, $\davidsstar_y$, $\davidsstar_{\mathrm{III}+}$ and~$\davidsstar_{\mathrm{III}-}$ are pairwisely asynchronized.

The following example gives all possibilities for the quotients of non-$\ka$-rational del Pezzo surfaces of degree $1$.

\begin{example}
\label{nratto}
Assume that a del Pezzo surface $X$ of degree $1$ is given by equation~\eqref{exequation}, and $B = 0$, $A = C$, such that $\sqrt[3]{A} \notin \ka$ and $\sqrt[3]{\dfrac{A}{2}} \notin \ka$. Then the group $\Gal\left(\ka\left(\sqrt[3]{A}, \sqrt[3]{2} \right) / \ka \right)$ contains an element $h$ of order $3$ that faithfully acts on \mbox{the $\davidsstar$-configurations} $\davidsstar_x$, $\davidsstar_y$, $\davidsstar_{\mathrm{III}+}$ and $\davidsstar_{\mathrm{III}-}$. Therefore $X$ is not $\ka$-rational by Corollary~\ref{Davidnotrat} and $\rho(X) = 1$ by Lemma \ref{Davidmin}.

The element $g_y: (x : y : z : t) \mapsto (\omega x : y : z : t)$ acts on $X$ and has type $\mathrm{II}$. The set of isolated fixed points of $g_y$ consists of $q_{y1}$ and $q_{y2}$. Therefore by Remark \ref{DP1typeIImin} the quotient~$X / \langle g_y \rangle$ is $\ka$-rational if $A$ is a square in $\ka$, and non-$\ka$-rational otherwise.

The element $g_{\mathrm{III}}: (x : y : z : t) \mapsto (\omega x : \omega^2 y : z : t)$ acts on $X$ and has type~$\mathrm{III}$. The set of fixed points of $g_{\mathrm{III}}$ consists of $p$, $q_{x1}$, $q_{x2}$, $q_{y1}$ and $q_{y2}$. Therefore by Remark~\ref{DP1typeIIImin} the quotient $X / \langle g_{\mathrm{III}} \rangle$ is $\ka$-rational if $A$ is a square in $\ka$, and non-$\ka$-rational otherwise.

The element $g_{\mathrm{IX}}: (x : y : z : t) \mapsto (-\omega x : \omega^2 y : z : t)$ acts on $X$ and has type $\mathrm{IX}$. By Remark \ref{DP1typeIXmin} the quotient $X / \langle g_{\mathrm{IX}} \rangle$ is $\ka$-rational if $A$ is a square in $\ka$, and non-$\ka$-rational otherwise.
\end{example}

\begin{remark}
\label{nrattoremark}
Note that if there is at least one non-square $u$ in $\ka$, and at least one non-cube~$v$ in $\ka$, then one can find $A$ satisfying conditions of any case of Example \ref{nratto}. We want to find $A$, such that $\sqrt[3]{A} \notin \ka$, $\sqrt[3]{\dfrac{A}{2}} \notin \ka$ and $A$ is either square or not.

In this case one can find an element $w \in \ka$, that is not a square, and is not a cube in~$\ka$. If $v$ is not a square then one can put $w = v$, and otherwise $w = vu^3$.

Therefore if $\sqrt[3]{2} \in \ka$ then $A = w^2$ and $A = w$ give the required possibilities.

If $\sqrt[3]{2} \notin \ka$ and $\sqrt{2} \in \ka$ then one can find $k \in \{1, 2, 3\}$, such that $2^{k-1}w$ and $2^k w$ are~not cubes in $\ka$. For this $k$ the numbers $2^{2k} w^2$ and $2^{2k + 1} w^2$ are not cubes too. \mbox{Therefore $A = 2^{2k + 1} w^2$} and $A = 2^k w$ give the required possibilities.

If $\sqrt[3]{2} \notin \ka$ and $\sqrt{2} \notin \ka$ then one can put $A = 4$ and $A = 32$ to achieve the required possibilities. 

\end{remark}

Almost the same construction gives an example of a $\ka$-rational quotient of a $\ka$-rational del Pezzo surface of degree $1$ by a group of type $\mathrm{II}$.

\begin{example}
\label{rattorat2}

Assume that a del Pezzo surface $X$ of degree $1$ is given by equation~\eqref{exequation}, and $B = 0$, $C = A^{-1}$, such that $\sqrt{A} \in \ka$ and $\sqrt[3]{2A} \notin \ka$. Then all $(-1)$-curves in $\davidsstar_{\mathrm{III}+}$ and~$\davidsstar_{\mathrm{III}-}$ are defined over $\ka$, and $X$ is $\ka$-rational by Corollary \ref{RatCor}.

Moreover, the Galois group $\Gal\left(\ka\left(\sqrt[3]{2A} \right) / \ka \right)$ faithfully acts on the $\davidsstar$-configuration~$\davidsstar_y$ consisting of the six $g_y$-invariant $(-1)$-curves. Thus $\rho(X)^{\langle g_y \rangle} = 1$ by Corollary \ref{Davidmin1}. The quotient $X / \langle g_y \rangle$ is $\ka$-rational by Remark \ref{DP1typeIImin}.

\end{example}

\begin{remark}
\label{rattorat2remark}

Note that if there is at least one non-cube $v$ in $\ka$, then one can easily find $A$ satisfying conditions of Example \ref{rattorat2}. If $\sqrt[3]{2} \notin \ka$ then one can put $A = 1$, and otherwise one can put $A = v^2$.

\end{remark}

To construct the examples of quotients of $\ka$-rational del Pezzo surfaces of degree $1$ by groups of type $\mathrm{III}$ and $\mathrm{IX}$ we have to modify equation~\eqref{exequation}.

\begin{example}
\label{ratto39}

Let $X$ be a del Pezzo surface $X$ of degree $1$ given by the equation
$$
Ax^6 + Cy^6 + 6Fx^2y^2z +2z^3 - t^2 = 0.
$$
Then the element $g_{\mathrm{III}}: (x : y : z : t) \mapsto (\omega x : \omega^2 y : z : t)$ acts on $X$ and has type $\mathrm{III}$. The twelve $g_{\mathrm{III}}$-invariant lines are given by
$$
\lambda^3 + 3F\lambda \pm \sqrt{AC} = 0, \qquad z = \lambda xy, \qquad t = \pm \sqrt{A} x^3 \pm \sqrt{C} y^3.
$$
\noindent These lines form two $\davidsstar$-configurations $\davidsstar_{\mathrm{III}+}$ and $\davidsstar_{\mathrm{III}-}$.

Also one can find that two $\davidsstar$-configurations $\davidsstar_x$ and $\davidsstar_y$, on which $g_{\mathrm{III}}$ acts faithfully, are given by
$$
\lambda^3 = - \frac{A}{2}, \qquad z = \lambda x^2 - \frac{F}{\lambda} y^2, \qquad t = \pm \sqrt{\frac{AC + 4F^3}{A}} \cdot y^3,
$$
\noindent and
$$
\lambda^3 = - \frac{C}{2}, \qquad z = - \frac{F}{\lambda} x^2 + \lambda y^2, \qquad t = \pm \sqrt{\frac{AC + 4F^3}{C}} \cdot x^3.
$$

Assume that $\sqrt{AC} \in \ka$, the equation $\lambda^3 + 3F\lambda + \sqrt{AC} = 0$ has no root defined over~$\ka$, $\sqrt{\dfrac{AC + 4F^3}{A}} \in \ka$, $\sqrt[3]{\dfrac{A}{2}} \in \ka$ and $\sqrt[3]{\dfrac{C}{2}} \in \ka$. Then all $(-1)$-curves in $\davidsstar_x$ and $\davidsstar_y$ are defined over $\ka$, and $X$ is $\ka$-rational by Corollary \ref{RatCor}. Moreover, the Galois group of the equation $\lambda^3 + 3F\lambda + \sqrt{AC} = 0$ contains an element $h$ of order $3$ that faithfully acts on \mbox{the $\davidsstar$-configurations} $\davidsstar_{\mathrm{III}+}$ and $\davidsstar_{\mathrm{III}-}$ consisting of the twelve $g_{\mathrm{III}}$-invariant $(-1)$-curves. Thus $\rho(X)^{\langle g_{\mathrm{III}} \rangle} = 1$ by Corollary \ref{Davidmin2}.

The set of $g_{\mathrm{III}}$-fixed points consists of five isolated fixed points: $p = (0: 0 : 2 : 4)$ and
$$
q_{x1} = \left(0 : 1 : 0 : \sqrt{C}\right), \quad q_{x2} = \left(0 : 1 : 0 : -\sqrt{C}\right),
$$
$$
q_{y1} = \left(1 : 0 : 0 : \sqrt{A}\right), \quad q_{y2} = \left(1 : 0 : 0 : -\sqrt{A}\right).
$$
\noindent Therefore by Remark \ref{DP1typeIIImin} the quotient $X / \langle g_{\mathrm{III}} \rangle$ is $\ka$-rational if $A$ is a square in $\ka$, and not $\ka$-rational otherwise.

~

The element $g_{\mathrm{IX}}: (x : y : z : t) \mapsto (-\omega x : \omega^2 y : z : t)$ acts on $X$ and has type $\mathrm{IX}$. As in the previous case $\rho(X)^{\langle g_{\mathrm{IX}} \rangle} = 1$. By Remark \ref{DP1typeIXmin} the quotient $X / \langle g_{\mathrm{IX}} \rangle$ is $\ka$-rational if $A$ is a square in $\ka$, and not $\ka$-rational otherwise.

\end{example}

\begin{remark}
\label{ratto39remark}
Note that the conditions of Example \ref{ratto39} hold for $A = 2u^3$, $C = 2u^{-3}$, $\ka = \mathrm{k}\left(F, \sqrt{2u(1+F^3)}\right)$, where $\omega \in \mathrm{k}$, and $2u$ is either square for the case of $\ka$-rational quotient, or not square for the case of non-$\ka$-rational quotient.

\end{remark}

The most difficult case is a non-$\ka$-rational quotient of a $\ka$-rational del Pezzo surface of degree $1$ by a group of type $\mathrm{II}$.

\begin{example}
\label{rattonrat2}

Let $X$ be a del Pezzo surface $X$ of degree $1$ given by the equation
$$
Ax^6 + Cy^6 + Ey^4z + z^3 - t^2 = 0.
$$
Then the element $g_y: (x : y : z : t) \mapsto (\omega x : y : z : t)$ acts on $X$ and has type $\mathrm{II}$. The six $g_y$-invariant lines are given by
$$
\lambda^3 + E\lambda + C = 0, \qquad z = \lambda y^2, \qquad t = \pm \sqrt{A} x^3.
$$
\noindent These lines form a $\davidsstar$-configuration $\davidsstar_y$.

Also one can find twelve $\davidsstar$-configurations on which $g_y$ acts faithfully. Four of these $\davidsstar$-configurations consist of $(-1)$-curves invariant under the involution $x \mapsto -x$. These curves are given by
$$
\lambda^3 = A, \qquad z + \lambda x^2 - \frac{w^2}{3} y^2 = 0, \qquad t = \frac{3E - w^4}{6w} y^3 + wyz,
$$
where $w$ is a root of the equation
\begin{equation}
\label{Wequat}
w^8 + 18Ew^4 + 108Cw^2 - 27E^2 = 0.
\end{equation}
The other eight $\davidsstar$-configurations consist of $(-1)$-curves given by
$$
u^2 = \frac{A(27E + 9w^4)}{27E + w^4}, \qquad \lambda^3 = - \frac{8Aw^4}{27E+w^4},
$$
$$
z + \lambda x^2 + \frac{2uw}{3\lambda} xy - \frac{w^2}{3} y^2 = 0, \qquad t = u x^3 + \frac{3E - w^4}{6w} y^3 + wyz, 
$$
where $w$ is a root of equation \eqref{Wequat}.

One can check that for a given value of $w^2$ from equation \eqref{Wequat} three corresponding $\davidsstar$-configurations are pairwisely asynchronized. Moreover, all these configurations are asynchronized with $\davidsstar_y$ by Lemma \ref{Davidinv}.

Assume that $C = \dfrac{27E^2 -18E - 1}{108}$ and $A = (3E + 1)^3(27E + 1)$. Then equation~\eqref{Wequat} has roots $w = 1$ and $w = -1$. Moreover, for these roots $u^2 = 9(3E + 1)^4$ and $\lambda^3 = - 8(3E + 1)^3$. It means that there are two asynchronized $\davidsstar$-configurations, such that all $(-1)$-curves in these configurations are defined over $\ka$. Thus $X$ is $\ka$-rational by Corollary \ref{RatCor}.

The Galois group of the equation $\lambda^3 + E\lambda + C = 0$ depends on presence \mbox{of $\sqrt{\Delta} = \sqrt{729C^2 + 108E^3}$} and $\sqrt[3]{\Xi} = \sqrt[3]{\frac{27C \pm \sqrt{\Delta}}{2}}$ in $\ka$. We have
$$
\Delta = \frac{(3E + 1)^3(27E + 1)}{16}, \textrm{and}\,\,\Xi = \frac{1}{8}\left( 27E^2 -18E - 1 \pm (3E + 1)\sqrt{(3E + 1)(27E + 1)}\right).
$$
Therefore if $\Xi$ is not a cube in $\ka(\sqrt{\Delta})$ then the Galois group of the equation \mbox{$\lambda^3 + E\lambda + C = 0$} contains an element $h$ of order $3$ that faithfully acts on the \mbox{$\davidsstar$-configuration} $\davidsstar_y$. Thus~$\rho(X)^{\langle g_y \rangle} = 1$ by Corollary \ref{Davidmin1}.

Moreover, the set of isolated fixed points of $g_y$ consists of two points
$$
q_{y1} = (1 : 0 : 0 : \sqrt{A}), \quad q_{y2} = (1 : 0 : 0 : -\sqrt{A}).
$$
\noindent Therefore by Remark \ref{DP1typeIImin} the quotient $X / \langle g_y \rangle$ is not $\ka$-rational if $A = (3E + 1)^2 \cdot \Delta$ is not a square in $\ka$.

These conditions hold, for example, for any field $\ka = \mathrm{k}(E)$, where $\omega \in \mathrm{k}$.

\end{example}

\section{Proofs of some main results}

In this section we prove some results which follow from classification of non-$\ka$-rational quotients of del Pezzo surfaces. At first we recall some facts about non-$\ka$-rational nontrivial quotients of cubic surfaces.

\begin{lemma}[{\cite[cf. Lemma 3.2]{Tr16b}}]
\label{DP3C35}
Let a finite group $G$ act on a del Pezzo surface~$X$ of degree $3$ and $N \cong \CG_3$ be a normal subgroup in $G$ such that $N$ has only isolated fixed points. Then the surface $X / N$ is $G / N$-birationally equivalent to a cubic surface of type~$\mathrm{VIII}$. Moreover, the set of $\ka$-points on $X / N$ is dense.
\end{lemma}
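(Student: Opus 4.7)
The plan is to closely follow the construction in \cite[Lemma~3.2]{Tr16b}; the same geometric argument should carry over to the present situation, where one must additionally verify $G/N$-equivariance of each step.

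First I would apply the Hurwitz formula to the quotient morphism $f\colon X \to X/N$. Since $N$ has only isolated fixed points, $f$ is \'etale in codimension one, so $K_X = f^*K_{X/N}$ and hence $K_{X/N}^2 = K_X^2/3 = 1$. A Lefschetz fixed-point count on $H^*(\XX)$ together with the classification of order-3 elements in $\mathrm{W}(\mathrm{E}_6)$ without a pointwise fixed curve on $\XX$ determines the fixed locus: exactly three isolated points, at each of which a generator of $N$ acts on the tangent space as $\operatorname{diag}(\omega,\omega^2)$. Consequently $X/N$ is a singular del Pezzo surface of degree $1$ with three $A_2$-singularities, and its minimal resolution $\pi\colon\widetilde{X/N}\to X/N$ is a weak del Pezzo surface of degree $1$ carrying three disjoint pairs of $(-2)$-curves.

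Next I would construct a $G/N$-equivariant birational map from $X/N$ to a cubic surface by exploiting $N$-invariant tritangent planes on $X$. There is a distinguished finite collection of such planes, each cutting $X$ in a single $N$-orbit of three lines meeting at a common point; their descents to $X/N$ are curves passing through the three singularities. Taking proper transforms on $\widetilde{X/N}$ and performing a $G/N$-equivariant sequence of blow-ups and blow-downs (involving the $A_2$-chains from the resolution), one produces a smooth surface $Y$ with $K_Y^2=3$ and no $(-2)$-curves, which by Proposition~\ref{DPconnection} is a cubic surface.

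The main obstacle will be verifying that $Y$ is of type $\mathrm{VIII}$. I would show that the birational images of the three $A_2$-singularities of $X/N$ become Eckardt points on $\overline{Y}$, each lying at the intersection of three $(-1)$-curves coming from distinct $N$-orbits of tritangent planes used in the construction. Collinearity of the three Eckardt points in $\Pro^3_{\kka}$ then reduces to a linear-equivalence computation in $\Pic(\XX)^N$, expressing the hyperplane class through them as a specific combination of the classes of the tritangent orbits. Finally, density of $\ka$-points on $X/N \approx Y$ follows because $Y$ is a cubic surface possessing a $\ka$-point (coming from a $\ka$-rational element of the $G/N$-invariant configuration used above, e.g., one of the collinear Eckardt points when defined over $\ka$), so $Y$ is $\ka$-unirational by Remark~\ref{unirationality}.
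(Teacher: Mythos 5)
Your setup (Hurwitz, three $A_2$-points with local action $\operatorname{diag}(\omega,\omega^2)$, the weak del Pezzo resolution of degree $1$) agrees with the paper, but the core of your construction has a genuine gap. You propose to descend $N$-invariant tritangent planes ``each cutting $X$ in a single $N$-orbit of three lines meeting at a common point.'' Such a configuration forces the common point to be an $N$-fixed Eckardt point of $X$ itself, and a general cubic surface with this $\CG_3$-action has no Eckardt points, so your distinguished collection of planes is typically empty. The paper instead uses the two $N$-invariant \emph{smooth} hyperplane sections $C_1 = \{z=0\}$ and $C_2 = \{t=0\}$ (the eigensections of $N$ on the anticanonical system), each passing through the three fixed points; on $\widetilde{X/N}$ their proper transforms are two disjoint $(-1)$-curves, since $\pi^{-1}_*f(C_j)^2 = \frac{1}{3}C_j^2 - 3\cdot\frac{2}{3} = -1$, and contracting exactly these two curves raises $K^2$ from $1$ to $3$. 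Your ``sequence of blow-ups and blow-downs'' does not identify which curves achieve this, and any additional blow-up would move $K^2$ in the wrong direction.

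The identification of the Eckardt points is also reversed. The six $(-2)$-curves over the three singularities become six lines on $Y$; the two Eckardt points are the images of the two \emph{contracted} curves $\pi^{-1}_*f(C_j)$ (each lying on the three lines $hT_{1j}, hT_{2j}, hT_{3j}$), not the images of the singularities. The third Eckardt point is the residual intersection $R$ of $Y$ with the line through these two points; that it is Eckardt is seen from the plane spanned by $hT_{i1}$ and $hT_{i2}$, which cuts out of $Y$ the three lines with classes $hT_{i1}$, $hT_{i2}$ and $-K_Y - hT_{i1}-hT_{i2}$, all passing through $R$. This third point is also the source of the $\ka$-point needed for density: the two points $h\pi^{-1}_*f(C_j)$ are individually defined over $\ka$ only when $\omega\in\ka$ (otherwise they are swapped by Galois), whereas $R$ is always defined over $\ka$. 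Your proposal leaves both the existence of the third collinear Eckardt point and the provenance of the $\ka$-point unestablished.
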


\begin{proof}
If a group $N$ of order $3$ acts on a cubic surface $X$ and has only ioslated fixed points, then one can choose coordinates in $\Pro^3_{\kka}$ such that $\XX$ is given by
$$
x^3 + y^3 + zt(ux + vy) + z^3 + t^3 = 0
$$
\noindent and $N$ acts as
$$
(x : y : z: t) \mapsto \left( x : y : \omega z : \omega^2 t \right).
$$
\noindent The fixed points of $N$ lie on the line $z = t = 0$. Thus $N$ has three fixed points $q_1$, $q_2$ and $q_3$. One can easily check that on the tangent spaces of $\XX$ at these points $N$ acts as~$\langle\operatorname{diag}(\omega, \omega^2)\rangle$. Denote by $C_1$ and $C_2$ invariant curves $z = 0$ and $t = 0$ each passing through the three points $q_i$.

Let $f: X \rightarrow X / N$ be the quotient morphism and
$$
\pi: \widetilde{X / N} \rightarrow X / N
$$
\noindent be the minimal resolution of singularities. The curves $f(C_1)$ and $f(C_2)$ meet each other at the three singular points of $X / N$ and $f(C_1) \cdot f(C_2) = 1$. Thus two curves $\pi^{-1}_* f (C_j)$ are disjoint. Moreover by Remark \ref{singularities}, one has
$$
\pi^{-1}_* f (C_j)^2 = f(C_j)^2 - 3 \cdot \frac{2}{3} = \frac{1}{3} C_j^2 - 2 = -1.
$$
Therefore we can $G / N$-equivariantly contract the two $(-1)$-curves $\pi^{-1}_* f (C_j)$ and get a surface~$Y$ with $K_Y^2 = 3$. We denote this contraction by $h: \widetilde{X / N} \rightarrow Y$.

The surface $X / N$ has only Du Val singularities. Therefore $X / N$ is a singular del Pezzo surface and $\widetilde{X / N}$ is a weak del Pezzo surface containing exactly six curves $\pi^{-1}(q_i)$ whose selfintersection is less than $-1$. Thus $Y$ does not contain curves with selfintersection less than $-1$. So $Y$ is a del Pezzo surface of degree $3$.

Denote the six irreducible components of $\pi^{-1}(q_i)$ by $T_{ij}$. One has
$$
T_{ij}^2 = -2, \qquad T_{i1} \cdot T_{i2} = 1, \qquad T_{ij} \cdot \pi^{-1}_* f (C_j) = 1.
$$
\noindent The six curves $hT_{ij}$ are lines on $Y$. Three lines $hT_{i1}$ pass through $h\pi^{-1}_* f (C_1)$, and three lines $hT_{i2}$ pass through $h\pi^{-1}_* f (C_2)$. Therefore $h\pi^{-1}_* f (C_j)$ are Eckardt points on $Y$.

The line passing through $h\pi^{-1}_* f (C_1)$ and $h\pi^{-1}_* f (C_2)$ meet $Y$ at the third point $R$, defined over $\ka$. Moreover, a plane spanned on $hT_{i1}$ and $hT_{i2}$ passes through $R$. This plane cuts three lines from $Y$ with classes $hT_{i1}$, $hT_{i2}$ and $-K_Y - hT_{i1} - hT_{i2}$. Therefore three lines with classes $-K_Y - hT_{i1} - hT_{i2}$ pass through~$R$, and $R$ is an Eckardt point on $Y$. Thus $Y$ has type $\mathrm{VIII}$.

The point $R$ is defined over $\ka$, hence $Y$ is $\ka$-unirational by Remark \ref{unirationality}, and the sets of $\ka$-points on $Y$ and $X / N$ are dense.

\end{proof}

\begin{remark}
\label{DP3C35remark}
Note that the Eckardt points $h\pi^{-1}_* f (C_1)$ and $h\pi^{-1}_* f (C_2)$ are defined over~$\ka$ if and only if the $N$-invariant curves $C_1$ and $C_2$ are defined over $\ka$. One can choose coordinates in $\Pro^3_{\ka}$ such that $N$ acts as
$$
(x : y : z: t) \mapsto \left( x : y : t : -z - t \right),
$$
and the curves $C_1$ and $C_2$ are given by $z = \omega t$ and $z = \omega^2 t$. Therefore these curves and the Eckardt points $h\pi^{-1}_* f (C_j)$ are defined over $\ka$ if and only if $\omega \in \ka$.
\end{remark}

\begin{remark}
\label{DP3C35min}
Note that if $\rho(X)^G = 1$ and the three isolated fixed points of $N$ are permuted by the \mbox{group $G \times \Gal \left( \kka / \ka \right)$} then $Y$ is $(G / N)$-minimal, since
$$
\rho(Y)^{G / N} = \rho\left(\widetilde{X / N}\right)^{G / N} + k = \rho\left(X / N\right)^{G / N} = \rho(X)^G = 1,
$$
\noindent where $k = 2$ if $C_1$ and $C_2$ are defined over $\ka$ and $k = 1$ otherwise. 
\end{remark}

Note that in \cite[Lemma 5.11 and Example 6.4]{Tr16b} there is constructed an example of non-$\ka$-rational quotient of $\CG_3$-minimal $\ka$-rational cubic surface by a group $\CG_3$. But this quotient is a minimal del Pezzo surface of degree $4$ admitting a structure of a conic bundle. Now we show that the constructed in Lemma \ref{DP3C35} cubic surface $Y$ can be minimal.

\begin{example}
\label{DP3C35example}
Let the field $\ka$ contains $\omega$ and $\alpha$, such that $\sqrt[3]{\alpha} \notin \ka$. Consider a cubic surface $X$ given by
$$
\alpha x^3 - \alpha^2 y^3 + z^3 - \alpha t^3 = 0.
$$

The group $G \cong \CG_3$ acts on $X$ as
$$
(x : y : z: t) \mapsto \left( x : y : \omega z : \omega^2 t \right).
$$

The triple of disjoint lines
$$
\begin{cases}
x = \sqrt[3]{\alpha}y, \\
z = \sqrt[3]{\alpha}t;
\end{cases} \qquad
\begin{cases}
x = \omega\sqrt[3]{\alpha}y, \\
z = \omega\sqrt[3]{\alpha}t;
\end{cases} \qquad
\begin{cases}
x = \omega^2\sqrt[3]{\alpha}y, \\
z = \omega^2\sqrt[3]{\alpha}t
\end{cases}
$$
\noindent is defined over $\ka$. Therefore $X$ is birationally equivalent to a del Pezzo surface of degree~$6$, and $\ka$-rational by Theorem \ref{ratcrit}, since the $\ka$-point $(1 : 0 : 0 : 1)$ lies on $X$.

Moreover, the $G$-fixed points $\left(\sqrt[3]{\alpha} : 1 : 0 : 0\right)$, $\left(\omega\sqrt[3]{\alpha} : 1 : 0 : 0\right)$ and $\left(\omega^2\sqrt[3]{\alpha} : 1 : 0 : 0\right)$ are transitively permuted by the Galois group. The $27$ lines on $X$ are given by the following equations
$$
\begin{cases}
x = \omega^i \sqrt[3]{\alpha}y, \\
z = \omega^j \sqrt[3]{\alpha}t;
\end{cases} \qquad
\begin{cases}
z = -\omega^i \sqrt[3]{\alpha}x, \\
t = -\omega^j \sqrt[3]{\alpha}y;
\end{cases} \qquad
\begin{cases}
x = \omega^i t, \\
z = \omega^j \sqrt[3]{\alpha^2}y,
\end{cases}
$$
\noindent where $i \in \{0, 1, 2\}$ and $j \in \{0, 1, 2\}$, and one can check that $\rho(X)^G = 1$. Therefore by Remark \ref{DP3C35min} the quotient $X / G$ is birationally equivalent to a minimal cubic surface $Y$ of type $\mathrm{VIII}$-$1$.

If the field $\ka$ does not contain $\omega$ then one can check that the same construction works for a cubic surface $X$ given by
$$
\alpha (P(x,y)) + P(z,t) = 0,
$$
\noindent where $\sqrt[3]{\alpha} \notin \ka$, and $P(z,t)$ is a cubic polynomial without roots defined over $\ka$ and invariant under the action of an element of order three, acting as
$$
(z, t) \mapsto (t, -z-t).
$$

In this case the quotient $X / G$ is birationally equivalent to a minimal cubic surface $Y$ of type $\mathrm{VIII}$-$2$.
\end{example}

To prove Theorem \ref{Galunirat} and Corollary \ref{Galuniratcor} we need the following definition.

\begin{definition}[cf. {\cite[Section 1]{Isk96}}]
\label{rigidness}
A minimal surface $X$ is called \textit{birationally rigid} if for any birational map of minimal surfaces $X \dashrightarrow X'$ one has $X' \cong X$.

\end{definition}

\begin{proof}[Proof of Theorem \ref{Galunirat}]

A Galois $\ka$-unirational surface $Y$ contains dense set of $\ka$-points, and is birationally equivalent either to a quotient of a surface admitting a structure of a conic bundle, or to a quotient of a $\ka$-rational $G$-minimal del Pezzo surface $X$ by a finite group~$G$. In the first case one can apply relative minimal model program and show that $Y$ is birationally equivalent to a surface admitting a structure of conic bundle. In the second case the surface $Y$ can be non-$\ka$-rational only if the surface $X$ and the group $G$ are listed in Theorem \ref{main}.

In this case $X$ is a del Pezzo of degree $d \leqslant 4$. If $G$ is trivial then $X$ is not $\ka$-rational by Theorem \ref{ratcrit}. If $G$ is not trivial then for all cases of Theorem \ref{main} except three the surface~$Y$ is birationally equivalent to a surface admitting a structure of a conic bundle by \cite[Lemmas 5.8, 6.1, 6.8]{Tr18a}, \cite[Lemmas 4.5, 4.16]{Tr18b} and Lemmas \ref{DP1typeII}, \ref{DP1typeIII} and~\ref{DP1typeIX}. These three cases are when $d = 3$ and $G$ is a certain group of order $3$, when $d = 2$ and $G$ is a certain group of order $2$, and when $d = 1$ and $G$ is a certain group of order $2$. For the cases $d = 2$ and $d = 1$ the surface $X$ is not $\ka$-rational by \cite[Lemma 5.1]{Tr18b} and \cite[Remark 5.2]{Tr18b}.

For the remaining case $d = 3$ the quotient is birationally equivalent to a cubic surface of type $\mathrm{VIII}$ by Lemma \ref{DP3C35}. Example \ref{DP3C35example} shows that this cubic surface can be minimal (and therefore birationally rigid) for any field $\ka$ with at least one non-cube.

\end{proof}

\begin{proof}[Proof of Corollary \ref{Galuniratcor}]

Let $X$ be a del Pezzo surface of degree $d$ such that $X(\ka) \neq \varnothing$ and $\rho(X) = 1$.

The surface $X$ is $\ka$-rational for $d \geqslant 5$ by Theorem \ref{ratcrit}, and thus $X$ is Galois $\ka$-unirational in this case. If $d = 4$ then $X$ is Galois $\ka$-unirational by \cite[Theorem IV.8.1]{Man74}.

If $d \leqslant 3$ then $X$ is birationally rigid by \cite[Theorems 1.6, 4.4, 4.5]{Isk96}. Therefore by Theorem \ref{Galunirat} in this case $X$ can be Galois $\ka$-rational only if $d = 3$ and $X$ is a cubic surface of type $\mathrm{VIII}$.

\end{proof}

Now we pass to the proof of Theorem \ref{Galgeomunirat}. 

\begin{proof}[Proof of Theorem \ref{Galgeomunirat}]

As in the proof of Theorem \ref{Galunirat} one can see that the quotient of a del Pezzo surface of by a nontrivial group is either birationally equivalent to a surface admitting a structure of a conic bundle (see Remark \ref{Cbundlebiratrem}), or birationally equivalent to a del Pezzo surface of degree no less than~$4$ (see Remark \ref{conditions}), or birationally equivalent to a del Pezzo surface of degree $3$ with an Eckardt point defined over $\ka$ (see Lemma \ref{DP3C35} and \cite[Lemma 3.2]{Tr16b}), or birationally equivalent to a del Pezzo surface of degree $2$ with a generalized Eckardt point defined over $\ka$ (see \cite[Lemma~4.8]{Tr18b}).

In particular, the other minimal del Pezzo surfaces of degree $3$, $2$ and $1$ are not birationally equivalent to a quotient of a geometrically rational surface, since these surfaces are birationally rigid.

\end{proof}

Now we prove Theorem \ref{Unquotient}.

\begin{proof}[Proof of Theorem \ref{Unquotient}]

Applying $G$-minimal model program we can assume that $X$ either admits a structure of a conic bundle, or is a del Pezzo surface.

If $X$ admits a structure of a conic bundle then the quotient $X / G$ is $\ka$-rational by Theorem \ref{Cbundlebirat}. If $X$ is a del Pezzo surface then the quotient $X / G$ can be non-$\ka$-rational only if $G$ is a cyclic group of order $3$ by Theorem \ref{main}.

\end{proof}

\bibliographystyle{alpha}

\end{document}